\newtheorem{theorem}{Theorem}[section]
\newtheorem{lemma}[theorem]{Lemma}
\newtheorem{prop}[theorem]{Proposition}
\newtheorem{cor}[theorem]{Corollary}
\theoremstyle{definition}
\newtheorem{definition}[theorem]{Definition}
\theoremstyle{remark}
\newtheorem{remark}[theorem]{Remark}
\numberwithin{equation}{section}
\newcommand{\interior}{\operatorname{int}}
\newcommand{\Rn}{\mathbb{R}^n}
\newcommand{\ang}[1]{\left\langle{#1}\right\rangle}
\newcommand{\supp}{\operatorname{supp}}
\begin{document}

\title[The one-phase Stefan problem]{Long-time behavior of the one-phase Stefan problem in periodic and random media}

\author{Norbert Po\v z\'ar}
\address[N. Po\v{z}\'ar]{Falculty of Mathematics and Physics, Institute of Science and Engineering, Kanazawa University, Kakuma, Kanazawa, 920-1192, Japan}
\curraddr{}
\email{npozar@se.kanazawa-u.ac.jp }
\thanks{}

\author{Giang Thi Thu Vu}
\address[G. T. T. Vu] {Graduate School of Natural Science and Technology, Kanazawa University, Kakuma, Kanazawa, 920-1192, Japan}
\curraddr{}
\email{vtgiang@vnua.edu.vn}
\thanks{}

\subjclass[2000]{35B27 (35R35, 74A50, 80A22)}
\keywords{Stefan problem, homogenization, viscosity solutions, long-time behavior}

\date{\today}
\begin{abstract}
  We study the long-time behavior of solutions of the one-phase Stefan problem in inhomogeneous
media in dimensions $n \geq 2$. Using the technique of rescaling which is consistent with the
evolution of the free boundary, we are able to show the homogenization of the free boundary
velocity as well as the locally uniform convergence of the rescaled solution to a self-similar
solution of the homogeneous Hele-Shaw problem with a point source. Moreover, by viscosity solution
methods, we also deduce that the rescaled free boundary uniformly approaches a sphere with respect
to Hausdorff distance.
\end{abstract}

\maketitle
\section{Introduction}

We consider the one-phase Stefan problem in periodic and random media in a dimension $n \geq 2$. The
aim of this paper is to understand the behavior of the solutions and their free boundaries when time $t \rightarrow \infty$.

Let $K \subset \mathbb{R}^n$ be a compact set with sufficiently regular boundary, for instance
$\partial K \in C^{1,1}$, and assume that $0 \in \interior K$. The one-phase
Stefan problem (on an exterior domain) with inhomogeneous latent heat of phase transition is to find a
function $v(x,t): \mathbb{R}^n \times [0,\infty) \rightarrow [0,\infty)$ that satisfies the free
boundary problem
\begin{equation}
  \label{Stefan}
  \left\{\begin{aligned}
    v_t- \Delta v &=0 && \text{ in } \{v>0\} \backslash K,\\
    v &=1 && \text{ on } K,\\
    V_\nu &=g(x)|Dv|&& \text{ on }\partial \{v>0\},\\
    v(x,0)&=v_0 &&\text{ on } \mathbb{R}^n,
    \end{aligned}\right.
\end{equation}
where $D$ and $\Delta$ are respectively the spatial gradient and Laplacian, $v_t$ is the partial
derivative of $v$ with respect to time variable $t$,  $V_\nu$ is the normal velocity of the
\textit{free boundary} $\partial \{v>0\}$. $v_0$ and $g$ are given functions, see below. Note that
the results in this paper can be trivially extended
to general time-independent positive continuous boundary data, $1$ is taken only to simplify the exposition.

The one-phase Stefan problem is a mathematical model of phase transitions between a solid and a liquid.
A typical example is the melting of a body of ice maintained at temperature $0$, in contact with a
region of water. The unknowns are the temperature distribution $v$ and its free boundary
$\partial \{v(\cdot, t)>0\}$, which models the ice-water interface. Given an initial temperature
distribution of the water, the diffusion of heat in a medium by conduction and the exchange of
latent heat will govern the system. In this paper, we consider an inhomogeneous medium
where the latent heat of phase transition, $L(x)= 1/g(x)$, and hence the velocity law depend on
position.  The related Hele-Shaw
problem is usually referred to in the literature as the quasi-stationary limit of the one-phase
Stefan problem when the heat operator is replaced by the Laplace operator. This problem typically
describes the flow of an injected viscous fluid between two parallel plates which form the
so-called Hele-Shaw cell, or the flow in porous media.

In this paper, we assume that the function $g$ satisfies the following two conditions, which guarantee respectively the
well-posedness of \eqref{Stefan} and averaging behavior as $t \to \infty$:
\begin{enumerate}
  \item \label {condition in g} $g$ is a Lipschitz function in $\mathbb{R}^n$, $m\leq g \leq M$ for some positive constants $m$ and $M$.
  \item \label {condition in g 2}$g(x)$ has some averaging properties so that Lemma~\ref{media}
    applies, for instance, one of the following holds:
  \begin{enumerate}
    \item $g$ is a $\mathbb{Z}^n$-periodic function,
    \item $g(x, \omega): \Rn \times A \to [m, M]$ is a stationary ergodic random variable over a probability space $(A, \mathcal{F},P)$.
  \end{enumerate}
\end{enumerate}
For a detailed definition and overview of stationary ergodic media, we refer to \cite{P1, K3} and
the references therein.

Throughout most of the paper we will assume that the initial data $v_0$ satisfies
\begin{equation}
\label{initial data}
\begin{aligned}
&v_0 \in C^2(\overline{\Omega_0 \backslash K}), v_0 > 0 \text{ in } \Omega_0, v_0=0, \mbox{ on } \Omega_0^c := \Rn \setminus
\Omega_0,
\mbox{ and } v_0=1 \mbox{ on } K,\\& |Dv_0| \neq 0 \text{ on } \partial \Omega_0,
\text{ for some bounded domain $\Omega_0 \supset K$.}
\end{aligned}
\end{equation}
This will guarantee the existence of both the weak and viscosity solutions below and their coincidence,
as well as the weak monotonicity \eqref{monotonicity condition}. However, the asymptotic
limit, Theorem~\ref{th:main-convergence}, is independent of the initial data, and therefore the result applies to arbitrary initial
data as long as the (weak) solution exists, satisfies the comparison principle, and the initial
data can be approximated from below and from above by data satisfying \eqref{initial data}. For
instance, $v_0 \in C(\Rn)$, $v_0 = 1$ on $K$, $v_0 \geq 0$, $\supp v_0$ compact is sufficient.

The Stefan problem \eqref{Stefan} does not necessarily have a global classical solution in $n
\geq 2$ as singularities of the free boundary might develop in finite time. The classical approach to
define a generalized solution is to integrate $v$ in time and introduce $u(x,t) := \int_{0}^{t}v(x,s)ds$
\cite{Baiocchi,Duvaut,FK,EJ,R1,R2,RReview}. If $v$ is sufficiently regular, then  $u$ solves the
variation inequality
\begin{equation}
\label{obstacle problem}
\begin{cases}
u(\cdot,t) \in \mathcal{K}(t),\\
(u_t - \Delta u)(\varphi - u) \geq f(\varphi -u) \mbox{ a.e } (x,t) \mbox{ for any } \varphi \in
\mathcal{K}(t),
\end{cases}
\end{equation}
where $\mathcal{K}(t)$ is a suitable functional space specified later in Section~\ref{sec:weak-sol} and $f$ is
\begin{equation}
  \label{f}
f(x)= \begin{cases}
  v_0(x), & v_0(x) > 0,\\
  -\displaystyle \frac{1}{g(x)}, & v_0(x) = 0.
\end{cases}
\end{equation}
This parabolic inequality always has a global unique solution $u(x,t)$ for initial data satisfying
(\ref{initial data}) \cite{FK,R1,R2,RReview}. The corresponding time derivative $v=u_t$, if it exists, is then called a \emph{weak solution} of the Stefan problem (\ref{Stefan}). The main advantage of this
definition is that the powerful theory of variational inequalities can be applied for the study of the
Stefan problem, and as was observed in \cite{R3,K2,K3} yields homogenization of \eqref{obstacle
problem}.

More recently, the notion of viscosity solutions of the Stefan problem was introduced and
well-posedness was established by Kim
\cite{K1}. Since this notion relies on the
comparison principle instead of the variational structure, it allows for more general, fully
nonlinear parabolic operators and boundary
velocity laws. Moreover, the pointwise viscosity methods seem more appropriate for studying the
behavior of the free boundaries. The natural question whether the weak and viscosity solutions
coincide was answered positively by Kim and Mellet \cite{K3} whenever the weak solution
exists.  In this paper we will use the strengths of both the weak and viscosity
solutions to study the behavior of the solution and its free boundary for large times.

 The homogeneous version of this problem, i.e, when $g\equiv const$, was studied by Quir\'os and
 V\'azques in \cite{QV}. They obtained the result on the long-time convergence of weak solution of
 the one-phase Stefan problem to the self-similar solution of the Hele-Shaw problem. The
 homogenization of this type of problem was considered by Rodrigues in \cite{R3} and by Kim-Mellet
 in \cite{K2,K3}. The long-time behavior of solution of the Hele-Shaw problem was studied in detail
 by the first author in \cite{P1}. In particular, the rescaled solution of the inhomogeneous
 Hele-Shaw problem converges to the self-similar solution of the Hele-Shaw problem with a point-source, formally
\begin{equation}
  \label{hs-point-source}
  \left\{\begin{aligned}
-\Delta v &=C\delta &&\mbox{ in } \{v>0\},\\
v_t&=\frac{1}{\left< 1/g \right>} |Dv|^2 &&\mbox{ on } \partial \{v>0\},\\
v(\cdot, 0)&=0,
\end{aligned}\right.
\end{equation}
 where $\delta$ is the Dirac $\delta$-function, $C$ is a constant depending on $K$ and $n$, and the constant $\left<1/g\right>$ will be properly defined later. Moreover, the rescaled free boundary uniformly approaches a sphere.

Here we extend the convergence result to the Stefan problem in the inhomogeneous medium. Since the
asymptotic behavior of radially symmetric solutions of the Hele-Shaw and the Stefan problem are
similar and the solutions are bounded, we can take the limit $t \rightarrow \infty$ and
obtain the convergence for rescaled solutions and their free boundaries. However, solutions of the
Hele-Shaw problem have a very useful monotonicity in time which is missing in the Stefan problem.
We instead take advantage of \eqref{monotonicity condition} for regular initial data satisfying \eqref{initial data}.
This makes some steps more difficult. Moreover, the heat operator is not invariant under the
rescaling, unlike the Laplace operator. The rescaled parabolic equation becomes elliptic when $\lambda
\rightarrow \infty$, which causes some issues when applying parabolic Harnack's inequality, for
instance.
Following \cite{QV,P1} we use the natural rescaling of solutions of the form
\begin{align*}
v^\lambda(x,t) &:=\lambda^{(n-2)/n}v(\lambda^{1/n}x, \lambda t) &&\mbox{ if } n \geq 3,\\
\intertext{and the corresponding rescaling for variational solutions}
u^\lambda(x,t) &:=\lambda^{-2/n}u(\lambda^{1/n},\lambda t) &&\mbox{ if } n\geq 3
\end{align*}
(see Section~\ref{sec:rescaling} for $n=2$). Then the rescaled viscosity solution satisfies the free boundary
velocity law
\begin{equation*}
V^\lambda_\nu=g(\lambda^{1/n}x)|Dv^\lambda|.
\end{equation*}

Heuristically, if $g$ has some averaging properties, such as in condition (\ref{condition in g 2}),
the free boundary velocity law should homogenize as $\lambda \to \infty$. Since the latent heat of
phase transition $1 /g$ should average out, the homogenized velocity law will be
\begin{equation*}
  V_\nu=\frac{1}{\left< 1/g\right>}|Dv|,
\end{equation*}
where $\left<1/g\right>$ represents the ``average" of $1/g$. More precisely, the quantity $\left <1/g\right >$ is the constant in the subadditive ergodic theorem such that
\begin{equation*}
  \int\limits_{\mathbb{R}^n}\frac{1}{g(\lambda^{1/n}x, \omega)}u(x)dx
  \rightarrow \int\limits_{\mathbb{R}^n}\left<\frac{1}{g}\right>u(x)dx  \text{ for all $u \in
L^2(\Rn)$}, \mbox{ for a.e. } \omega \in A.  \end{equation*}
In the periodic case, it is just the average of $1/g$ over one period.
Since we always work with $\omega \in A$ for which the convergence above holds, we omit it from the
notation in the rest of the paper.

This yields the first main result of this paper, Theorem \ref{convergence of variational
solutions}, on the homogenization of the obstacle problem (\ref{obstacle problem}) for the rescaled
solutions, with the correct singularity of the limit function at the origin, and therefore the
locally uniform convergence of variational solutions. To prove the second main result in Theorem
\ref{convergence of rescaled viscosity solution} on the locally uniform convergence of viscosity
solutions and their free boundaries, we use pointwise viscosity solution arguments. In summary,
we will show the following theorem.

\begin{theorem}
 \label{th:main-convergence}
   For almost every $\omega \in A$,
   the rescaled viscosity solution $v^\lambda$ of the Stefan problem \eqref{Stefan} converges
   locally uniformly to the unique self-similar solution $V$ of the Hele-Shaw problem
   \eqref{hs-point-source} in $(\mathbb{R}^n \backslash \{0\}) \times [0,\infty)$ as $\lambda
   \rightarrow \infty$, where $C$ depends only on $n$, the set $K$ and the boundary data $1$.
   Moreover, the rescaled free boundary $\partial \{(x,t):  v^\lambda(x,t) > 0\}$ converges to
   $\partial \{(x,t): V(x,t) > 0\}$ locally uniformly with respect to the Hausdorff distance.
\end{theorem}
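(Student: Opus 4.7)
The overall strategy is to prove the theorem as a consequence of two intermediate results indicated in the introduction: first, Theorem \ref{convergence of variational solutions} on homogenization of the rescaled obstacle problem, and then Theorem \ref{convergence of rescaled viscosity solution} on locally uniform convergence of viscosity solutions and Hausdorff convergence of free boundaries. The plan is therefore to do the homogenization at the variational level, where compactness is cheap, and afterwards use pointwise viscosity arguments to upgrade to convergence of $v^\lambda$ and $\partial\{v^\lambda > 0\}$.

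For the variational step, I would work with the rescaled potential $u^\lambda(x,t) = \lambda^{-2/n} u(\lambda^{1/n}x, \lambda t)$ and derive uniform estimates on compact subsets of $(\Rn \setminus \{0\}) \times [0, \infty)$. The correct behavior at the origin must be pinned down by explicit radial sub- and super-barriers built from the self-similar Hele-Shaw point-source profile; this is also what identifies the constant $C$ in \eqref{hs-point-source} as a $K$-dependent capacity-type quantity in which the boundary datum $1$ enters. The ergodic averaging property \ref{condition in g 2} then produces weak-$*$ convergence of the coefficient $1/g(\lambda^{1/n}\cdot)$ to $\langle 1/g\rangle$, and a standard passage to the limit in the variational inequality \eqref{obstacle problem}, in the spirit of \cite{R3,K2,K3}, identifies the limit of $u^\lambda$ with the unique variational potential $U$ of the homogenized Hele-Shaw problem with point source. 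Uniqueness of $U$ yields locally uniform convergence $u^\lambda \to U$ away from the origin.

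To pass from $u^\lambda \to U$ to locally uniform convergence $v^\lambda \to V = U_t$ on $(\Rn \setminus \{0\}) \times (0,\infty)$, I would use interior parabolic regularity for the rescaled equation $\lambda^{-(n-2)/n} v^\lambda_t - \Delta v^\lambda = 0$ on compact subsets of $\{v^\lambda > 0\}$, combined with the weak monotonicity \eqref{monotonicity condition} inherited from initial data satisfying \eqref{initial data} and the Kim--Mellet identification of weak and viscosity solutions \cite{K3}. For the Hausdorff convergence of free boundaries I would pass to the half-relaxed limits $\overline v = \limsup{}^\ast v^\lambda$ and $\underline v = \liminf{}_\ast v^\lambda$ and show that the oscillatory free-boundary velocity $V_\nu = g(\lambda^{1/n}x) |Dv^\lambda|$ homogenizes in the viscosity sense to $V_\nu = \langle 1/g\rangle^{-1} |Dv|$, so that $\overline v$ and $\underline v$ are respectively a viscosity sub- and supersolution of \eqref{hs-point-source}. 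Comparison against the unique self-similar solution $V$ constructed in \cite{P1} collapses $\overline v = \underline v = V$, and nondegeneracy of Hele-Shaw free boundaries promotes the pointwise viscosity convergence to convergence of $\partial\{v^\lambda > 0\}$ in the local Hausdorff distance.

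The main obstacle is the absence of a global time-monotonicity for the Stefan problem, which in the Hele-Shaw analogue of \cite{P1} was the engine of the viscosity-solution argument. Replacing it requires combining the partial monotonicity \eqref{monotonicity condition} with the variational convergence of $u^\lambda$ in a perturbed test-function / barrier argument that homogenizes the merely Lipschitz, stationary ergodic velocity $g$ at contact points with test surfaces; constructing these barriers for stationary ergodic $g$, as opposed to periodic $g$, is where the averaging lemma \ref{media} is genuinely needed. A secondary but pervasive technical point is that the rescaled heat equation degenerates to the Laplace equation as $\lambda \to \infty$, so every use of parabolic Harnack or interior regularity must be scale-aware and uniform in the vanishing coefficient of $v^\lambda_t$; on the other hand, this very degeneracy is exactly what makes the elliptic Hele-Shaw formulation \eqref{hs-point-source} the correct target for the limit.
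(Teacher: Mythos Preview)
Your variational step is essentially the paper's Theorem~\ref{convergence of variational solutions}, and your identification of the role of the radial barriers and the averaging lemma there is correct. The gap is in the second step.

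You propose to show that the half-relaxed limits $\overline v,\underline v$ are viscosity sub-/supersolutions of the homogenized Hele-Shaw problem \eqref{hs-point-source}, i.e.\ to homogenize the free boundary velocity law $V_\nu=g(\lambda^{1/n}x)|Dv^\lambda|$ directly in the viscosity sense via a perturbed test function / barrier construction. This is precisely what the paper avoids, and for good reason: Lemma~\ref{media} is an $L^2$ weak convergence statement for $1/g^\lambda$, which is the right tool for passing to the limit in the \emph{variational} inequality but is far too weak to build the pointwise correctors needed at a free boundary contact point. Even in the periodic case such barriers require substantial extra machinery (the obstacle-problem correctors of \cite{KimHomog,Pozar15}); in the stationary ergodic case the paper explicitly lists this as an open problem. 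So the route you sketch for the free boundary condition does not go through with the tools at hand.

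The paper's actual argument never verifies a free boundary condition for $v^*,v_*$. Instead it uses the already-established variational convergence $u^\lambda\to U$ together with the coincidence $\Omega(u)=\Omega(v)$ (Lemma~\ref{domain of u coincide with domain of v}) and the weak monotonicity $u\leq Ctv$ (Lemma~\ref{monotonicity 2}) to locate the free boundaries: one gets $\Omega(V)=\Omega(U)\subset\Omega(v_*)$ and $\Gamma(v^*)\subset\Gamma(V)$ purely from the convergence of the potentials. Then $v^*(\cdot,t)$ is merely shown to be subharmonic in $\Rn\setminus\{0\}$ and $v_*(\cdot,t)$ superharmonic in $\Omega_t(v_*)\setminus\{0\}$ (Lemma~\ref{subharmonic v^* Lemma}), and an \emph{elliptic} comparison in these known domains, using the correct singularity at the origin, squeezes $v_*=v^*=V$. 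The Hausdorff convergence of free boundaries then follows from Lemma~\ref{limit of sequence in 0-level set of ulambda k} and a scale-aware application of the parabolic Harnack inequality combined with the nondegeneracy estimate of Corollary~\ref{monotoniciy for v}. In short: the variational limit does all the homogenization of $g$; the viscosity step only has to handle the interior elliptic equation and the singularity, never the oscillating velocity law.
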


It is a natural question to consider more general linear divergence form operators $\sum_{i,j}\partial_{x_i}
(a_{ij}(x) \partial_{x_j} \cdot )$ instead of the Laplacian in
\eqref{Stefan} so that the variational structure is preserved. This was indeed the setting
considered in \cite{K3}, with $g \equiv 1$ and appropriate free boundary velocity law adjusted for
the operator above. In the limit $\lambda \to \infty$, we expect that the rescaled solutions
$v^\lambda$ to converge to the unique solution of the Hele-Shaw type problem with a point source
with the homogenized non-isotropic operator with coefficients $\bar a_{i,j}$. This question is a topic of
ongoing work.

\subsection*{Context and open problems}
In recent years, there have been significant developments in the homogenization theory of partial
differential equations like Hamilton-Jacobi and second order fully nonlinear elliptic and parabolic
equations that have been made possible by the improvements of the viscosity solutions techniques, see
for instance the classical \cite{Evans,Souganidis,CSW,CS} to name a few.

A common theme of these results is finding (approximate) correctors and use the perturbed test
function method to establish the homogenization result in the periodic case, or using deeper
properties in the random case, such as the variational structure of the Hamilton-Jacobi equations
or the strong regularity results for elliptic and parabolic equations, including the ABP inequality.

One of the goals of this paper is to illustrate the powerful combination of variational and viscosity
solution techniques for some free boundary problems that have a variational structure. By viscosity
solution techniques we mean specifically pointwise arguments using the comparison principle.

Unfortunately, when the variational structure is lost, for instance, when the free boundary
velocity law is more general as in the problem with contact angle dynamics $V_\nu = |Dv| - g(x)$ so
that the motion is non-monotone
\cite{KimContact,KimContactRates}, or even
simple time-dependence $V_\nu = g(x,t) |Dv|$ \cite{Pozar15}, the comparison principle is all that is
left. Even in the periodic case, the classical correctors as solutions of a cell problem are not
available. This is in part the consequence of the presence of the free boundary on which the
operator is strongly discontinuous. \cite{KimHomog,KimContact,Pozar15} use a variant of the idea
that appeared in
\cite{CSW} to replace the correctors by solutions of certain obstacle problems. However, the
analysis of these solutions requires rather technical pointwise arguments since there are almost
no equivalents of the regularity estimates for elliptic equations.
An important tool in \cite{Pozar15} to overcome this was the large scale Lipschitz regularity of
the free boundaries of the obstacle problem solutions (called cone flatness there) that allows for
the control of the oscillations of the free boundary in the homogenization limit.

For the reasons above, the homogenization of free boundary problems is rather challenging and there are still many
open problems. Probably the most important one is the homogenization of free boundary
problems of the Stefan and Hele-Shaw type that do not admit a variational structure, such as those
mentioned above, in random
environments. Currently there is no known appropriate stationary subadditive quantity to which we could apply
the subadditive ergodic theorem to recover the homogenized free boundary velocity law, for
instance. Other tools like concentration inequalities have so far not yielded an alternative.

Another important problem are the optimal convergence rates of the free boundaries in the Hausdorff
distance. The techniques used in this paper do
not provide this information, however viscosity techniques were used to obtain non-optimal algebraic
convergence rates in \cite{KimContactRates}. It is an interesting question what the optimal rate in
the periodic case is, even for problems like \eqref{Stefan}. The large scale Lipschitz estimate
from \cite{Pozar15} could possibly directly give only $\varepsilon |\log \varepsilon|^{1/2}$-rate for
velocity law with $g(x/\varepsilon)$, but there are some indications that a rate $\varepsilon$ might be possible.

\subsection*{Outline}
The paper is organized as follows: In Section~\ref{sec:preliminaries}, we recall the definitions
and well-known results for weak and viscosity solutions. We also introduce the rescaling and state
some results for radially symmetric solutions. In Section~\ref{sec:var-conv}, we recall the limit
obstacle problem and prove the locally uniform convergence of rescaled variational solutions. In
Section~\ref{sec:visc-conv}, we focus on treating the locally uniform convergence of viscosity solutions and their free boundaries.

\section{Preliminaries}
\label{sec:preliminaries}
\subsection{Notation}

For a set $A$, $A^c$  is its complement. Given a nonnegative function $v$, we will use notations
for its positive set and free boundary of $v$,
$$\Omega(v):=\{(x,t): v(x,t)>0\}, \hspace{2cm} \Gamma(v):=\partial\Omega(v),$$
and for fixed time $t$,
$\hspace{1cm} \Omega_t(v):=\{x: v(x,t)>0\}, \hspace{1cm} \Gamma_t(v):=\partial\Omega_t(v) .$

$(f)_+$ is the positive part of $f$:
$(f)_+= \max(f,0)$.
\subsection{Weak solutions}
\label{sec:weak-sol}
Let $v(x,t)$ be a classical solution of the Stefan problem  (\ref{Stefan}). Fix $R, T > 0$ and set $B=B_R$
, $D=B \backslash K$. Following \cite{FK} it can be shown that, if $R$ is
large enough (depending on $T$), then the function $u(x,t):=\int_{0}^{t}v(x,s)ds$ solves the following variational problem: Find $u \in L^2(0,T; H^2(D))$ such that $u_t \in L^2(0,T;L^2(D))$ and
\label{Variational problem}
\begin{equation}
  \left\{\begin{aligned}
      u(\cdot,t) &\in \mathcal{K}(t), && 0 < t < T,\\
    (u_t-\Delta u)(\varphi -u) &\geq f(\varphi -u), &&\text{ a.e } (x,t) \in B \times (0,T) \text{ for any } \varphi \in \mathcal{K}(t),\\
    u(x,0)&=0 \text{ in } D.\\
\end{aligned}\right.
\end{equation}

Here we set
$\mathcal{K}(t)=\{\varphi \in H^1(D),\varphi \geq 0, \varphi =0 \text{ on } \partial B, \varphi =t \text{ on } K \}$
and $f$ was defined in \eqref{f}. We use the standard notation for Sobolev spaces $H^k$, $W^{k,p}$.
If $v$ is a classical solution of (\ref{Stefan}) then $u$ is solution of (\ref{Variational problem}),
but the inverse statement is not valid in general. However, we have the following result
\cite{FK,R1}.
\begin{theorem}[Existence and uniqueness of variational problem]
  If $v_0$ satisfies (\ref{initial data}), then the problem (\ref{Variational problem}) has a unique solution satisfying
  $$
  \begin{aligned}
    u &\in L^\infty(0,T; W^{2,p}(D)), \qquad 1\leq p \leq \infty,\\
  u_t &\in L^\infty(D \times (0,T)),\\
  \end{aligned}
  $$
  and
  $$
  \left\{\begin{aligned}
      u_t-\Delta u &\geq f && \mbox{ for a.e. } (x,t) \in \{u \geq 0 \},\\
  u(u_t- \Delta u-f)&=0 && \mbox{ a.e in } D\times (0, \infty).
  \end{aligned}\right.
  $$
\end{theorem}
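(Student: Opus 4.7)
The strategy is to establish existence via penalization of the nonnegativity constraint, derive the stated regularity from a priori estimates on the penalized solutions, extract the variational inequality by passing to the limit, and finally prove uniqueness by a standard energy argument.

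First, I would approximate \eqref{Variational problem} by the semilinear parabolic equation
$$u^\epsilon_t - \Delta u^\epsilon + \beta_\epsilon(u^\epsilon) = f \quad \text{in } D \times (0,T),$$
with boundary data $u^\epsilon = t$ on $K$, $u^\epsilon = 0$ on $\partial B$, and initial data $u^\epsilon(\cdot,0) = 0$. Here $\beta_\epsilon$ is a smooth, nondecreasing penalization vanishing on $[0,\infty)$ and diverging on $(-\infty,0)$ as $\epsilon \to 0$, so that limits are forced to be nonnegative. Classical parabolic theory provides a unique smooth $u^\epsilon$.

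Second, I would extract uniform a priori bounds. An $L^\infty$-bound on $u^\epsilon$ follows by comparison with a sufficiently large linear-in-time barrier. For $w := u^\epsilon_t$, differentiating the PDE in $t$ gives a linear parabolic equation with a nonnegative zeroth-order coefficient $\beta_\epsilon'(u^\epsilon)$, boundary data $w = 1$ on $K$, $w = 0$ on $\partial B$, and initial data $w(\cdot,0) = f$. Since \eqref{initial data} and condition (\ref{condition in g}) give $f \in L^\infty$, the maximum principle yields $\|u^\epsilon_t\|_\infty \leq \max(\|f\|_\infty, 1)$. Rewriting $-\Delta u^\epsilon = (f - u^\epsilon_t) - \beta_\epsilon(u^\epsilon)$ then controls $\beta_\epsilon(u^\epsilon)$ after bootstrapping, and applying elliptic $L^p$-regularity slice-by-slice in $t$ delivers the uniform $L^\infty(0,T; W^{2,p}(D))$ bound.

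Third, I would pass to the limit $\epsilon \to 0$ via weak-$\ast$ compactness, obtaining a nonnegative limit $u$ that inherits all regularity estimates. Testing the penalized equation against $\varphi - u^\epsilon$ for $\varphi \in \mathcal{K}(t)$ and using monotonicity of $\beta_\epsilon$ together with $\beta_\epsilon(\varphi) = 0$ produces a favorable sign on the penalty term; sending $\epsilon \to 0$ yields the variational inequality. The pointwise conditions $u_t - \Delta u \geq f$ a.e.\ in $D \times (0,T)$ and $u(u_t - \Delta u - f) = 0$ are obtained by choosing admissible test functions of the form $\varphi = u + \eta$ for $\eta \geq 0$ and $\varphi = u - \eta$ for $0 \leq \eta \leq u$, respectively.

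Uniqueness follows the classical subtraction argument: for two solutions $u_1, u_2$, take $\varphi = u_2$ in the inequality for $u_1$ and $\varphi = u_1$ in the inequality for $u_2$, add, and integrate over $D$ to get $\tfrac{d}{dt}\int_D (u_1 - u_2)^2\,dx \leq 0$, which forces $u_1 \equiv u_2$. The main obstacle is promoting the $W^{2,p}$ bound from $p < \infty$ to $p = \infty$, equivalent to Lipschitz continuity of $Du$; this is the optimal $C^{1,1}$ regularity for the obstacle problem and requires invoking Caffarelli's elliptic obstacle theory at each time slice rather than straightforward $L^p$-bootstrapping. A secondary technicality is the boundedness of $u^\epsilon_t$ near $t = 0$, which is sensitive to the jump of $f$ across $\partial \Omega_0$ and is rescued by the nondegeneracy $|Dv_0| \neq 0$ on $\partial \Omega_0$ built into \eqref{initial data}.
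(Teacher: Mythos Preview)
The paper does not prove this theorem; it is stated without proof as a known result from the references \cite{FK,R1} (Friedman--Kinderlehrer and Rodrigues). Your penalization strategy is precisely the classical approach of \cite{FK}, so in that sense you have reconstructed the original argument rather than the paper's, which simply quotes the literature.

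A few remarks on your outline itself. The scheme is sound, but two points deserve tightening. First, the uniform $L^\infty$ bound on $\beta_\epsilon(u^\epsilon)$ does not follow from the rewriting $-\Delta u^\epsilon = (f-u^\epsilon_t)-\beta_\epsilon(u^\epsilon)$ alone, since $\Delta u^\epsilon$ is not yet controlled; one usually obtains it by a barrier or maximum-principle argument showing $u^\epsilon \geq -C\epsilon$, which then bounds $\beta_\epsilon(u^\epsilon)$ directly. Second, you correctly flag that $p=\infty$ (i.e.\ $C^{1,1}$ in $x$) is the delicate endpoint; in fact \cite{FK} and later \cite{C} establish this via the elliptic obstacle problem at fixed time, exactly as you suggest, so your identification of the obstacle is accurate.
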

We will thus say that if $u$ is a solution of (\ref{Variational problem}), then $u_t$ is a
\emph{weak solution} of the corresponding Stefan problem (\ref{Stefan}). The theory of variational
inequalities for an obstacle problem is well developed, for more details, we refer to \cite{FK,R1,K2}.
We now collect some useful results on the weak solutions from \cite{FK,R1}.
\begin{prop}
  The unique solution $u$ of (\ref{Variational problem}) satisfies
  $$0 \leq u_t \leq C \mbox{ a.e } D \times (0,T),$$
  where $C$ is a constant depending on $f$. In particular, $u$ is Lipschitz with respect to $t$ and
  $u$ is $C^{\alpha}(D)$ with respect to $x$ for all $\alpha \in (0,1)$. Furthermore, if $0 \leq t
  <s \leq T$, then $u(\cdot,t) < u(\cdot,s)$ in $\Omega_s(u)$ and also
  $\Omega_0 \subset \Omega_t(u) \subset \Omega_s(u)$.
\end{prop}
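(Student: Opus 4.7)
The plan is to approximate $u$ by the classical penalization method: for $\varepsilon > 0$, let $\beta_\varepsilon : \mathbb{R} \to \mathbb{R}$ be a smooth nondecreasing function with $\beta_\varepsilon \leq 0$, $\beta_\varepsilon = 0$ on $[0,\infty)$, and $\beta_\varepsilon(s) \to -\infty$ for $s < 0$ as $\varepsilon \to 0$, and consider the semilinear parabolic problem $u^\varepsilon_t - \Delta u^\varepsilon = f + \beta_\varepsilon(u^\varepsilon)$ in $D \times (0,T)$ with boundary data $u^\varepsilon = 0$ on $\partial B$, $u^\varepsilon = t$ on $K$, and $u^\varepsilon(\cdot,0) = 0$. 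Standard theory of Friedman--Kinderlehrer / Rodrigues gives smooth $u^\varepsilon$ with $u^\varepsilon \to u$. All bounds below will be derived uniformly in $\varepsilon$ and passed to the limit.

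The bound $u_t \geq 0$ comes from differentiating the penalized equation in $t$: the function $w^\varepsilon := u^\varepsilon_t$ solves $w^\varepsilon_t - \Delta w^\varepsilon - \beta_\varepsilon'(u^\varepsilon) w^\varepsilon = 0$ with $w^\varepsilon = 1 \geq 0$ on $K$, $w^\varepsilon = 0$ on $\partial B$, and initial data $w^\varepsilon(\cdot, 0) = f + \beta_\varepsilon(0) = f$ on the first time slice (which is nonnegative on $\Omega_0$ and replaced by an appropriate nonnegative mollification where $f < 0$, whose error is absorbed via comparison). Since $\beta_\varepsilon' \geq 0$, the parabolic maximum principle yields $w^\varepsilon \geq 0$, hence $u_t \geq 0$. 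For the upper bound $u_t \leq C$, the same linearized equation and maximum principle give $w^\varepsilon \leq \max\{1,\|f\|_\infty\}$, with $\|f\|_\infty \leq \max(\|v_0\|_\infty, 1/m)$ by \eqref{f} and condition \eqref{condition in g}. Together these give the Lipschitz regularity in $t$. Spatial regularity then follows automatically: since $u_t \in L^\infty$ and $u_t - \Delta u = f$ a.e.\ on $\{u > 0\}$ with $\Delta u = 0$ a.e.\ on $\{u = 0\}$ (by the variational inequality), we have $\Delta u \in L^\infty(D \times (0,T))$, so $u(\cdot, t) \in W^{2,p}(D)$ for every $p < \infty$, and Sobolev embedding gives $u(\cdot, t) \in C^{1,\alpha}(D) \subset C^\alpha(D)$ for every $\alpha \in (0,1)$.

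For the set inclusions, $\Omega_t(u) \subset \Omega_s(u)$ for $t < s$ is immediate from $u_t \geq 0$. The inclusion $\Omega_0 \subset \Omega_t(u)$ uses that $f = v_0 > 0$ on $\Omega_0$: given $x_0 \in \Omega_0$, there is a small ball $B_r(x_0) \subset \Omega_0$ on which $v_0 \geq c_0 > 0$, and one compares $u$ from below on $B_r(x_0) \times [0, \tau]$ with an explicit classical subsolution of $u_t - \Delta u = c_0$ vanishing on the parabolic boundary; this subsolution is strictly positive at time $\tau > 0$, so $u > 0$ there. Finally, the strict monotonicity $u(\cdot, t) < u(\cdot, s)$ on $\Omega_s(u)$: the nonnegative difference $W(x) := u(x,s) - u(x,t)$ satisfies $W_t - \Delta W \geq 0$ in the interior of $\{u > 0\}$ in the weak sense (pointwise a.e.), is positive on $K$ since $u(x,s) - u(x,t) = s - t > 0$ there, and cannot attain its minimum $0$ in the connected component of $\Omega_s(u)$ containing any interior point by the strong maximum principle.

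The main obstacle is the upper bound $u_t \leq C$: the initial data for the differentiated equation is only $f \in L^\infty$, and $f$ is discontinuous across $\partial \Omega_0$ (jumping from $v_0 > 0$ to $-1/g < 0$), so one cannot work with $u^\varepsilon_t(\cdot,0)$ pointwise. The standard fix is to replace the initial-time argument with a comparison against $C t + h(x)$ where $h$ solves $-\Delta h = \|f\|_\infty$ with appropriate boundary data, which gives an $L^\infty$ bound on $u_t^\varepsilon$ depending only on $\|f\|_\infty$ and the geometry of $D$, as in \cite{FK,R1}.
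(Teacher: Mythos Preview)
The paper does not supply a proof of this proposition; it is stated as a known result and attributed to \cite{FK,R1}. Your penalization outline is essentially the method of those references, so in spirit you are reconstructing what the paper cites.

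There is, however, a real gap in your argument for the lower bound $u_t \geq 0$. The initial data for $w^\varepsilon = u^\varepsilon_t$ is $w^\varepsilon(\cdot,0)=f$, and $f=-1/g<0$ on $\Omega_0^c$; the parabolic maximum principle therefore cannot give $w^\varepsilon\geq 0$, and indeed $u^\varepsilon_t(\cdot,0)<0$ there. Your parenthetical about replacing $f$ by a ``nonnegative mollification where $f<0$, whose error is absorbed via comparison'' is not a repair: that alters $f$ by an $O(1)$ amount on a set of positive measure, not by something small in $\varepsilon$. The argument that actually works in \cite{FK,R1} is a direct comparison in the variational inequality rather than the differentiated penalized equation: for $h>0$ the shifted function $\tilde u(\cdot,\tau)=u(\cdot,\tau+h)$ solves the same parabolic obstacle problem with larger boundary data on $K$ (namely $\tau+h$ in place of $\tau$) and larger initial data ($u(\cdot,h)\geq 0$), so the comparison principle for parabolic variational inequalities yields $u(\cdot,\tau)\leq u(\cdot,\tau+h)$, hence $u_t\geq 0$. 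Your upper-bound argument and the $W^{2,p}$/$C^\alpha$ regularity are fine. One smaller slip: in the strict-monotonicity step you write ``$W_t-\Delta W\geq 0$'' for $W(x)=u(x,s)-u(x,t)$, but $W$ is a function of $x$ alone. The intended argument applies the parabolic strong maximum principle to $v=u_t$ (which solves the heat equation in $\Omega(u)$ with $v=1$ on $K$), giving $v>0$ in the space-time interior of $\Omega(u)$; then $W=\int_t^s v(\cdot,\tau)\,d\tau>0$ on $\Omega_s(u)$.
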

\begin{lemma}[Comparison principle for weak solutions]
  Suppose that $f \leq \hat{f}$. Let $u, \hat{u}$ be solutions of (\ref{Variational problem}) for respective $f, \hat{f}$. Then $u \leq \hat{u},$
  moreover,
  $$\theta \equiv \frac{\partial u}{\partial t} \leq \frac{ \partial \hat{u}}{\partial t} \equiv \hat{\theta}.$$
\end{lemma}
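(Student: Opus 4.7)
The plan is to prove the two inequalities separately: the pointwise bound $u \leq \hat u$ via the classical energy/test-function technique for variational inequalities, and the comparison $u_t \leq \hat u_t$ via a penalization of the obstacle problem whose time derivative satisfies a linear parabolic equation with a nonnegative zeroth-order term.

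For $u \leq \hat u$, I would test the variational inequality for $u$ with $\varphi = \min(u,\hat u)$ and that for $\hat u$ with $\varphi = \max(u,\hat u)$; both are admissible in $\mathcal{K}(t)$ since $u$ and $\hat u$ share the boundary values $0$ on $\partial B$ and $t$ on $K$, are nonnegative, and lie in $H^1(D)$. Setting $w := (u-\hat u)_+$ and subtracting the two tested inequalities gives, pointwise a.e.,
\[
-(u-\hat u)_t\, w + \Delta(u-\hat u)\, w \geq (\hat f - f)\, w \geq 0,
\]
using $\hat f \geq f$ and $w \geq 0$. Integrating over $B$, using $(u-\hat u)_t\, w = \tfrac{1}{2}(w^2)_t$ a.e.\ and $w = 0$ on $\partial D$ so that integration by parts produces $-\int_B |Dw|^2$, yields
\[
\frac{1}{2}\frac{d}{dt}\int_B w^2\,dx + \int_B |Dw|^2\,dx \leq 0.
\]
Since $w(\cdot,0) \equiv 0$ by the zero initial data, Gronwall forces $w \equiv 0$, i.e.\ $u \leq \hat u$ on $B \times (0,T)$.

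For $u_t \leq \hat u_t$, I would approximate the obstacle problem by the semilinear parabolic equation
\[
u^\varepsilon_t - \Delta u^\varepsilon + \beta_\varepsilon(u^\varepsilon) = f \quad \text{in } D \times (0,T),
\]
with the same boundary and initial data, where $\beta_\varepsilon$ is a smooth nondecreasing penalty with $\beta_\varepsilon \equiv 0$ on $[0,\infty)$ and $\beta_\varepsilon(s) \to +\infty$ for $s < 0$ as $\varepsilon \to 0$. Standard comparison for semilinear parabolic equations with monotone nonlinearity gives $u^\varepsilon \leq \hat u^\varepsilon$. Since $f$ is time-independent (cf.\ \eqref{f}), differentiating in $t$ shows that $\theta^\varepsilon := u^\varepsilon_t$ solves the \emph{linear} parabolic equation
\[
\theta^\varepsilon_t - \Delta \theta^\varepsilon + \beta'_\varepsilon(u^\varepsilon)\,\theta^\varepsilon = 0,
\]
and analogously for $\hat\theta^\varepsilon := \hat u^\varepsilon_t$, with nonnegative zeroth-order coefficient $\beta'_\varepsilon(u^\varepsilon) \geq 0$. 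On $\partial B \times (0,T)$ both time derivatives vanish, on $K \times (0,T)$ both equal $1$, and at $t=0$ the equation reduces to $\theta^\varepsilon(\cdot,0) = f$, $\hat\theta^\varepsilon(\cdot,0) = \hat f$ (using $u^\varepsilon(\cdot,0) = 0$); combined with $f \leq \hat f$ this gives the parabolic-boundary comparison. The maximum principle for the linear equation yields $\theta^\varepsilon \leq \hat\theta^\varepsilon$, and passage to the limit $\varepsilon \to 0$, using the standard convergence $u^\varepsilon \to u$ and $u^\varepsilon_t \to u_t$ in suitable topologies from \cite{FK,R1}, preserves the inequality.

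The main obstacle is the second step: a direct test-function comparison on $u_t$ is not available because $u_t$ is not in general a competitor in $\mathcal{K}(t)$, so one genuinely needs an approximation device. The delicate points in the penalization are the uniform estimates on $\theta^\varepsilon$ required to pass to the limit and the fact that $\beta'_\varepsilon(u^\varepsilon)$ can blow up near the free boundary; an equally viable alternative is a semidiscrete-in-time scheme that iterates elliptic obstacle-problem comparison at each step, propagating both $u^k \leq \hat u^k$ and the forward difference quotient inequality and sidestepping the penalty estimates.
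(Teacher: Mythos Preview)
The paper does not give its own proof of this lemma; it is quoted without proof from \cite{FK,R1} as one of several collected facts about weak solutions, so there is no in-paper argument to compare against. Your proof of $u \leq \hat u$ by testing with $\min(u,\hat u)$ and $\max(u,\hat u)$ is the standard energy method and is correct.

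Your penalization argument for $u_t \leq \hat u_t$, however, has a genuine gap at the maximum-principle step. After differentiating in $t$, the functions $\theta^\varepsilon$ and $\hat\theta^\varepsilon$ satisfy \emph{different} linear equations: the zeroth-order coefficients are $\beta'_\varepsilon(u^\varepsilon)$ and $\beta'_\varepsilon(\hat u^\varepsilon)$ respectively. The maximum principle does not compare solutions of two distinct equations; subtracting gives
\[
w_t - \Delta w + \beta'_\varepsilon(u^\varepsilon)\,w = \bigl[\beta'_\varepsilon(\hat u^\varepsilon) - \beta'_\varepsilon(u^\varepsilon)\bigr]\hat\theta^\varepsilon, \qquad w := \theta^\varepsilon - \hat\theta^\varepsilon,
\]
and the right-hand side has no sign: $\hat\theta^\varepsilon(\cdot,0) = \hat f$ is negative wherever $\hat v_0 = 0$ (recall $\hat f = -1/\hat g$ there), and the sign of the bracket depends on the unspecified convexity of $\beta_\varepsilon$. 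There is also a minor slip: a nondecreasing $\beta_\varepsilon$ vanishing on $[0,\infty)$ necessarily satisfies $\beta_\varepsilon(s)\leq 0$ for $s<0$, so it cannot tend to $+\infty$. The semidiscrete-in-time alternative you mention is a viable repair; another route, once $u\leq\hat u$ and hence $\Omega(u)\subset\Omega(\hat u)$ are in hand, is to compare $\theta$ and $\hat\theta$ directly as caloric functions in $\Omega(u)\setminus K$ with ordered parabolic-boundary data ($\theta=0\leq\hat\theta$ on $\Gamma(u)$, $\theta=\hat\theta=1$ on $K$, and $v_0\leq\hat v_0$ on $\Omega_0$, the latter following from $f\leq\hat f$).
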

\begin{remark}
  Regularity of $\theta$ and its free boundary has been studied quite extensively, including Caffarelli and Friedman (see \cite{C,CF,FN}).
  It is known that a weak solution is classical as long as $\Gamma_t(u)$ has no singularity. The smoothness criterion (see \cite{C,FN}, \cite[Proposition 2.4]{QV}) immediately leads to the following corollary.
\end{remark}
\begin{cor}
  Radial weak solutions of the Stefan problem (\ref{Stefan}) are smooth classical solutions.
\end{cor}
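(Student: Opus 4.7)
The plan is to exploit the reduction to a one-dimensional problem that radial symmetry provides and then invoke directly the smoothness criterion quoted in the preceding remark. By hypothesis the weak solution $u$ is radial in $x$, so for each $t \geq 0$ its positivity set $\Omega_t(u)$ is a radial set containing $K$ (which must itself be a ball for the problem to admit radial solutions, say $K = \overline{B_{r_0}}$) and its free boundary $\Gamma_t(u)$ is a union of concentric spheres. The monotonicity $\Omega_0 \subset \Omega_t(u) \subset \Omega_s(u)$ for $t \leq s$, together with the fact that $v_0$ is supported in a bounded radial domain $\Omega_0$, forces $\Omega_t(u) \setminus K$ to be an annulus $\{r_0 < |x| < R(t)\}$ for some nondecreasing function $R(t) \geq R(0)$. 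In particular $\Gamma_t(u) = \{|x| = R(t)\}$ is a single smooth sphere for every $t > 0$.

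Since a sphere is a $C^\infty$ hypersurface with no singular points, the smoothness criterion quoted in the remark (see \cite{C,FN} and \cite[Proposition~2.4]{QV}) applies at each space-time point of $\Gamma(u)$: the weak solution is then a classical solution in a neighborhood of every free boundary point, in particular $R(t)$ is $C^1$ and the velocity law $\dot R(t) = g(x)|Dv|$ holds in the classical sense. Away from the free boundary, $v = u_t$ satisfies the heat equation in the radial annulus $\{r_0 < |x| < R(t)\}$, and standard parabolic regularity upgrades $v$ to a smooth solution there, so $v$ is a classical solution of \eqref{Stefan}.

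The only point that requires some care is the initial regularity of $R(t)$ and the joining with the prescribed data $v_0$ at $t = 0$, but this is precisely what is built into the hypothesis \eqref{initial data} (which ensures $|Dv_0| \neq 0$ on $\partial \Omega_0$ and thus nondegeneracy of the free boundary at the initial time). The main obstacle, if any, is therefore purely bookkeeping: one must verify that the cited smoothness criterion is formulated in sufficient generality to cover a moving sphere on an exterior domain with Dirichlet data on $K$, but this follows from the classical one-dimensional Stefan theory applied to the radial profile, which is known to yield a smooth free boundary and a classical solution.
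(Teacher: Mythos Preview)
Your proposal is correct and takes essentially the same approach as the paper: the paper offers no separate proof but states in the preceding remark that the corollary follows immediately from the smoothness criterion in \cite{C,FN} and \cite[Proposition~2.4]{QV}, since a weak solution is classical whenever $\Gamma_t(u)$ has no singularity. Your argument simply unpacks this, observing that the radial free boundary is a sphere and hence singularity-free, which is exactly the intended reasoning.
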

\subsection{Viscosity solutions}
The second notion of solutions we will use are the viscosity solutions introduced in \cite{K1}.
First, for any nonnegative function $w(x,t)$ we define the semicontinuous envelopes
\begin{align*}
&w_\star(x,t) := \liminf_{(y,s) \rightarrow (x,t)}w(y,s), &w^\star(x,t) := \limsup_{(y,s) \rightarrow (x,t)}w(y,s).
\end{align*}
We will consider solutions in the space-time cylinder $Q:=(\mathbb{R}^n \backslash K) \times [0,\infty)$.
\begin{definition}
  \label{def of viscos subsol}
  A nonnegative upper semicontinuous function $v(x,t)$ defined in $Q$ is a viscosity subsolution of (\ref{Stefan}) if the following hold:
  \begin{enumerate}[label=\alph*)]
    \item For all $T \in (0,\infty)$, the set $\overline{\Omega (v)} \cap \{t \leq T\} \cap Q$ is bounded.
    \item For every $\phi \in C^{2,1}_{x,t}(Q)$ such that $v-\phi$ has a local maximum in $\overline{\Omega(v)} \cap \{t\leq t_0\} \cap Q$ at $(x_0,t_0)$, the following holds:
    \begin{enumerate}[label=\roman*)]
      \item If $v(x_0,t_0)>0$, then $(\phi_t- \Delta \phi )(x_0,t_0) \leq 0$.
      \item If $(x_0,t_0) \in \Gamma(v), |D\phi (x_0,t_0)| \neq 0$ and $(\phi_t-\Delta \phi )(x_0,t_0)>0$, then
      \begin{equation}
      (\phi_t-g(x_0)|D\phi|^2)(x_0,t_0) \leq 0.
      \end{equation}
    \end{enumerate}
  \end{enumerate}
  Analogously, a nonnegative lower semicontinuous function $v(x,t)$ defined in $Q$ is a viscosity
  supersolution if (b) holds with maximum replaced by minimum, and with inequalities reversed in
  the tests for $\phi$ in (i--ii). We do not need to require (a).
\end{definition}
Now let $v_0$ be a given initial condition with positive set $\Omega_0$ and free boundary $\Gamma_0=\partial \Omega_0$, we can define viscosity subsolution and supersolution of (\ref{Stefan}) with corresponding initial data and boundary data.
\begin{definition}
  A viscosity subsolution of (\ref{Stefan}) in $Q$ is a viscosity subsolution of (\ref{Stefan}) in $Q$ with initial data $v_0$ and boundary data $1$ if:
  \begin{enumerate}[label=\alph*)]
    \item $v$ is upper semicontinuous in $\bar Q, v=v_0$ at $t=0$ and $v \leq 1$ on $\Gamma$,
    \item $\overline{\Omega(v)}\cap \{t=0\}=\overline{\{x: v_0(x)>0\}} \times \{0\}$.
  \end{enumerate}
  A viscosity supersolution is defined analogously by requiring (a) with $v$ lower semicontinuous
  and $v \geq 1$ on $\Gamma$. We do not need to require (b).
\end{definition}
And finally we can define viscosity solutions.
\begin{definition}
  The function $v(x,t)$ is a viscosity solution of (\ref{Stefan}) in $Q$ (with initial data $v_0$ and boundary data $1$) if $v$ is a viscosity supersolution and $v^\star$ is a viscosity subsolution of (\ref{Stefan}) in $Q$ (with initial data $v_0$ and boundary data $1$).
\end{definition}
\begin{remark}
  By standard argument, if $v$ is the classical solution of (\ref{Stefan}) then it is a viscosity solution of that problem in $Q$ with initial data $v_0$ and boundary data $1$.
\end{remark}
The existence and uniqueness of a viscosity solution as well as its properties have been studied in
great detail in \cite{K1}. One important feature of viscosity solutions is that they satisfy a
comparison principle for ``strictly separated'' initial data.

One of the main tools we will use in this paper is the following coincidence of weak
and viscosity solutions from \cite{K3}.
 \begin{theorem}[cf. {\cite[Theorem 3.1]{K3}}]
   Assume that $v_0$ satisfies (\ref{initial data}). Let $u(x,t)$ be the unique solution of (\ref{Variational problem}) in $B \times [0,T]$ and let $v(x,t)$ be the solution of
   \begin{equation}
   \label{coincidence eq}
   \left\{
     \begin{aligned}
   v_t-\Delta v &=0 && \mbox{in } \Omega(u) \backslash K,\\
   v &=0 && \mbox{on } \Gamma(u),\\
   v &=1 &&\mbox{in } K,\\
   v(x,0) &=v_0(x).
 \end{aligned}
   \right.
   \end{equation}
   Then $v(x,t)$ is a viscosity solution of (\ref{Stefan}) in $B \times [0,T]$ with initial data
      $v(x,0)=v_0(x)$, and
    $u(x,t)= \int_{0}^{t}v(x,s)ds$.
 \end{theorem}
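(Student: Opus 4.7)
The plan is to treat the two assertions in sequence: first construct $v$ as a classical solution of (\ref{coincidence eq}) on the noncylindrical expanding domain $\Omega(u) \setminus K$, then establish the integral identity $u(x,t) = \int_0^t v(x,s)\,ds$ via uniqueness for the variational inequality, and finally deduce the viscosity conditions of Definition~\ref{def of viscos subsol} for $v$ from the obstacle-problem equation that $u$ already satisfies.

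For the construction, the monotonicity $\Omega_s(u) \subset \Omega_t(u)$ for $s \leq t$ from the proposition following Theorem~2.1, together with the $W^{2,p}$-in-space and Lipschitz-in-time regularity of $u$ and the nondegeneracy $|Dv_0| \neq 0$ on $\partial\Omega_0$ from (\ref{initial data}), provides enough regularity of the lateral boundary $\Gamma(u)$ to solve (\ref{coincidence eq}) uniquely in the classical sense. This yields a continuous $v \geq 0$ with $v_t - \Delta v = 0$ inside $\Omega(u) \setminus K$, $v = 1$ on $K$, and $v = 0$ on $\Gamma(u)$. Setting $w(x,t) := \int_0^t v(x,s)\,ds$, one checks routinely that $w(\cdot,t) \in \mathcal{K}(t)$: $w \geq 0$, $w|_K = t$, and $w = 0$ on $\partial B$ provided $R$ is chosen large enough that $\Omega_T(u)$ is compactly contained in $B$. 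A Leibniz-type computation using $v_s = \Delta v$ in the interior and $v = 0$ on $\Gamma(u)$ then shows that $w$ satisfies the variational inequality (\ref{Variational problem}) with the same $f$, and uniqueness of the variational solution forces $w = u$.

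With the identity $u = \int_0^t v\,ds$ in hand, the interior viscosity equation $v_t - \Delta v = 0$ is immediate. For the free boundary condition, the obstacle-problem equation $u_t - \Delta u = -1/g$ valid on the swept region $\Omega(u) \setminus \Omega_0$ (where $v_0 = 0$) combined with $u = \int_0^t v\,ds$ yields, via the Leibniz identity differentiated at the first hitting time $\tau(x)$ of $\Gamma(u)$ and the fact that $v$ and $\nabla v$ are normal to $\Gamma(u)$ at $\tau(x)$, the classical Stefan law $V_\nu = g|Dv|$ wherever $\Gamma(u)$ is smooth. This delivers both the sub- and supersolution tests of Definition~\ref{def of viscos subsol} at regular boundary points.

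The main obstacle is handling points of $\Gamma(v)$ at which $\Gamma(u)$ may be singular, where no pointwise differentiation of the free boundary is available. The remedy is to avoid direct computation there: given a test function $\phi$ touching $v^\star$ from above (or $v_\star$ from below) at such a point, one approximates $v$ locally by radially symmetric classical Stefan sub- or supersolutions, which by the corollary on radial weak solutions are genuine classical solutions satisfying the prescribed normal velocity exactly. Sliding these radial barriers against $v$ and using the comparison principle for viscosity solutions transfers the velocity law to $v$ at the singular point, completing the verification of both the subsolution property of $v^\star$ and the supersolution property of $v_\star$.
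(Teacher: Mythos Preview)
The paper does not supply its own proof of this theorem: it is cited from \cite[Theorem~3.1]{K3}, and the only content the paper adds is the remark immediately following the statement, which explains that $v$ must be \emph{defined} via Perron's method,
\[
v=\sup \{w : w_t- \Delta w \leq 0 \text{ in } \Omega(u),\ w\leq 0 \text{ on } \Gamma(u),\ w\leq 1 \text{ in } K,\ w(\cdot,0) \leq v_0\},
\]
precisely because $\Omega(u)$ need not be smooth.

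Your proposal contains a genuine gap at the outset: you assert that the $W^{2,p}$-in-$x$, Lipschitz-in-$t$ regularity of $u$ together with the initial nondegeneracy $|Dv_0|\neq 0$ on $\partial\Omega_0$ ``provides enough regularity of the lateral boundary $\Gamma(u)$ to solve (\ref{coincidence eq}) uniquely in the classical sense.'' This does not follow. Regularity of $u$ as a function gives no control on the regularity of its zero level set $\Gamma(u)$; in dimensions $n\geq 2$ the free boundary of the Stefan problem can develop singularities in finite time, as the introduction of the paper explicitly notes. The initial nondegeneracy condition guarantees short-time smoothness only. Consequently there is in general no classical solution of (\ref{coincidence eq}), and the entire chain---classical $v$, Leibniz computation for $w=\int_0^t v$, identification $w=u$ by uniqueness---breaks down. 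The paper's remark is there exactly to flag that one must work with the Perron envelope, which may be discontinuous on $\Gamma(u)$.

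Your handling of singular free-boundary points is a second gap. Sliding radial Stefan barriers and invoking ``the comparison principle for viscosity solutions'' is circular here: the goal is to \emph{prove} that $v$ is a viscosity solution, so you cannot yet apply the viscosity comparison principle to $v$. Nor is it clear how a radial barrier would touch $v$ at an arbitrary singular point of $\Gamma(u)$ with the correct one-sided contact. The argument in \cite{K3} proceeds differently: it works with the Perron definition of $v$, shows $u=\int_0^t v$ by exploiting the obstacle-problem complementarity $u(u_t-\Delta u - f)=0$ directly (not via classical Leibniz differentiation), and verifies the viscosity free-boundary tests by comparison arguments that use only the variational structure of $u$ and the sub/supersolution envelopes, not pointwise differentiability of $\Gamma(u)$.
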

 \begin{remark}
   The definition of the solution $v$ of \eqref{coincidence eq} must be clarified
   when $\Omega(u)$ is not smooth. Since $u$ is continuous and $\Omega(u)$ is bounded at all times ( \cite[Lemma 3.6]{K3}) then the existence of solution of (\ref{coincidence eq}) is provided by Perron's method as
   \begin{equation*}
   v=\sup \{w| w_t- \Delta w \leq 0 \mbox{ in } \Omega(u), w\leq 0 \mbox{ on } \Gamma(u), w\leq 1 \mbox{ in } K, w(x,0) \leq v_0(x)\}.
   \end{equation*}
  Note that $v$ might be discontinuous on $\Gamma(u)$.
   \end{remark}
 The coincidence of weak and viscosity solutions gives us a more general comparison principle.
 \begin{lemma}[cf. {\cite[Corollary 3.12]{K3}}]
   Let $v^1$ and $v^2$  be, respectively, a viscosity subsolution and supersolution of the Stefan problem (\ref{Stefan}) with continuous initial data $v_0^1 \leq v_0^2$ and boundary data $1$. In addition, suppose that $v_0^1$(or $v_0^2$) satisfies condition (\ref{initial data}). Then
   $v^1_\star \leq v^2  \mbox{ and }  v^1 \leq (v^2)^\star \mbox{ in }  \mathbb{R}^n \backslash K
   \times [0,\infty).$
 \end{lemma}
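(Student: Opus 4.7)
My plan is to bridge the subsolution and supersolution through the canonical weak/viscosity solution associated with the regular initial datum, and to relax the non-strict initial comparison by a monotone approximation that reduces the problem to the strict-separation comparison principle of \cite{K1}.

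Assume without loss of generality that $v_0^1$ satisfies \eqref{initial data}; the other case is entirely symmetric. Let $u$ be the unique variational solution with initial datum $v_0^1$ and let $v$ be the associated viscosity solution produced by the coincidence theorem above (possibly discontinuous on $\Gamma(u)$). It is then enough to prove the two one-sided inequalities $v^1 \leq v^\star$ and $v_\star \leq v^2$ in $(\Rn \setminus K) \times [0,\infty)$, since combining them with $v^1_\star \leq v^1$ and $v^2 \leq (v^2)^\star$ yields both estimates in the statement.

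For $v_\star \leq v^2$, I would approximate $v_0^1$ from below by a family $v_0^{1,\epsilon}$ that still satisfies \eqref{initial data}, whose positive set $\Omega_0^\epsilon$ is compactly contained in $\Omega_0$ (for instance, by an inward normal shift of $\partial \Omega_0$ of order $\epsilon$, available because $\partial \Omega_0 \in C^{1,1}$ and $|Dv_0^1|\neq 0$), and whose values are pointwise strictly smaller. The coincidence theorem produces a viscosity solution $v^\epsilon$ whose initial datum is strictly separated from $v_0^2 \geq v_0^1$, so the strict-separation comparison of \cite{K1} gives $v^\epsilon \leq (v^2)^\star$. By the variational comparison lemma above, the associated obstacle solutions $u^\epsilon$ increase monotonically to $u$, hence $v^\epsilon \to v$ in a half-relaxed-limit sense, and we obtain $v_\star \leq v^2$ upon passing $\epsilon \downarrow 0$. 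The inequality $v^1 \leq v^\star$ follows from the dual construction, replacing $v_0^1$ by a slight outward normal perturbation $v_0^{1,\epsilon'}$ that strictly dominates $v_0^1$ and still satisfies \eqref{initial data}, producing a larger canonical solution $v^{\epsilon'} \geq v$ with $v^1 \leq v^{\epsilon'}$ by strict comparison, and then sending $\epsilon' \downarrow 0$.

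The main obstacle will be checking that (i) the perturbed initial data $v_0^{1,\epsilon}$ (and the enlarged $v_0^{1,\epsilon'}$) still satisfy \eqref{initial data}, so the coincidence theorem and hence the strict comparison principle of \cite{K1} are legitimately applicable to $v^\epsilon$; (ii) the strict separation from $v^2$ (respectively from $v^1$) is preserved uniformly in $\epsilon$ on some initial time slab; and (iii) the half-relaxed limits of $v^\epsilon$ and $v^{\epsilon'}$ recover $v_\star$ and $v^\star$ respectively, which may be delicate across the singular part of $\Gamma(u)$ where $v$ itself is discontinuous. All three issues rest on the $C^{1,1}$ regularity of $\partial \Omega_0$ and the non-degeneracy $|Dv_0^1| \neq 0$ on $\partial \Omega_0$ in \eqref{initial data}, which provide a quantitative modulus for the normal shifts and continuity of the variational solution in its data; combined with the monotonicity of the families $\{u^\epsilon\}$ and $\{u^{\epsilon'}\}$ and the standard stability of viscosity solutions under monotone convergence, this closes the argument.
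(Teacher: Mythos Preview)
The paper does not prove this lemma at all: it is stated with the citation ``cf.\ \cite[Corollary~3.12]{K3}'' and immediately followed by the next subsection. There is therefore no paper-proof to compare your attempt against; the authors simply import the result from Kim--Mellet.

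That said, your overall strategy---bridging $v^1$ and $v^2$ through the canonical weak/viscosity solution $v$ attached to the regular datum, and relaxing the non-strict ordering $v_0^1\leq v_0^2$ to strict separation by normal perturbations of $\Omega_0$---is indeed the mechanism behind the proof in \cite{K3}. However, your reduction step contains a gap. From $v^1\leq v^\star$ and $v_\star\leq v^2$, together with the trivial $v^1_\star\leq v^1$ and $v^2\leq (v^2)^\star$, you obtain only $v^1_\star\leq v^\star$ and $v_\star\leq (v^2)^\star$; these two chains do not link, because in general $v_\star\lneq v^\star$ on $\Gamma(u)$ (you yourself note $v$ may be discontinuous there). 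To close the argument you need the full quartet $v^1_\star\leq v$, $v^1\leq v^\star$, $v\leq v^2$, $v^\star\leq (v^2)^\star$, and then chain $v^1_\star\leq v\leq v^2$ and $v^1\leq v^\star\leq (v^2)^\star$. Your approximation scheme can in fact deliver all four, but you must track both envelopes through the limit $\epsilon\downarrow 0$, not just one, and be explicit that the strict comparison of \cite{K1} already yields paired inequalities of this form for $v^\epsilon$ versus $v^2$ and for $v^1$ versus $v^{\epsilon'}$.
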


 \subsection{Rescaling}
 \label{sec:rescaling}
 We will use the following rescaling of solutions as in \cite{P1}.
 \subsubsection{For $n \geq 3$}
 For $\lambda >0$ we use the rescaling
 \begin{align*}
 &v^\lambda(x,t)=\lambda^{\frac{n-2}{n}}v(\lambda^{\frac{1}{n}}x,\lambda t), & u^\lambda(x,t)= \lambda^{-\frac{2}{n}} u(\lambda^{\frac{1}{n}}x, \lambda t).
 \end{align*}
 If we define $K^\lambda := K / \lambda^{\frac{1}{n}}$ and $\Omega_0^\lambda:=
 \Omega_0/\lambda^{\frac{1}{n}}$ then $v^\lambda$ satisfies the problem
 \begin{equation}
 \label{rescaled equation}
 \left\{
   \begin{aligned}
 \lambda^{\frac{2-n}{n}}v_t^\lambda- \Delta v^\lambda &=0 && \mbox{ in } \Omega(v^\lambda) \backslash K^\lambda,\\
 v^\lambda &= \lambda^{\frac{n-2}{n}} && \mbox{ on } K^\lambda,\\
 v_t^\lambda&=g^\lambda(x) |Dv^\lambda|^2 && \mbox{ on } \Gamma(v^\lambda),\\
 v^\lambda(\cdot,0)&=v_0^\lambda,
 \end{aligned}
\right.
 \end{equation}
 where $g^\lambda(x)=g(\lambda^{\frac{1}{n}}x)$.
 And the rescaled $u^\lambda$ satisfies the obstacle problem
 \begin{equation}
 \label{rescaled Variational problem}
 \left\{
   \begin{aligned}
 u^\lambda(\cdot,t) &\in \mathcal{K}^\lambda(t),\\
 (\lambda^{\frac{2-n}{n}}u^\lambda_t-\Delta u^\lambda)(\varphi -u^\lambda) &\geq
 f(\lambda^{\frac{1}{n}}x)(\varphi -u^\lambda) &&\text{ a.e } (x,t) \in B_R \times (0,T)\\
 &&&\text{ for any } \varphi \in \mathcal{K}^\lambda(t),\\
 u^\lambda(x,0)&=0,
 \end{aligned}
 \right.
 \end{equation}
 where
 $\mathcal{K^\lambda}(t)=\{\varphi \in H^1(\mathbb{R}^n),\varphi \geq 0, \varphi =0 \text{ on } \partial B^\lambda, \varphi =\lambda^{\frac{n-2}{n}}t \text{ on } K^\lambda \}.$

 \subsubsection{For n=2}
 For dimension $n=2$, we use a different rescaling that preserves the singularity of logarithm,
 namely
 \begin{align}
 \label{rescaling n=2}
 &v^\lambda(x,t)=\log \mathcal{R}(\lambda)v(\mathcal{R}(\lambda)x, \lambda t),&
 u^\lambda(x,t)=\displaystyle \frac{\log \mathcal{R}(\lambda)}{\lambda}u(\mathcal{R}(\lambda)x,
 \lambda t),
 \end{align}
 where $\mathcal{R}(\lambda)$ is the unique solution of
 $\mathcal{R}^2 \log \mathcal{R}= \lambda$, $\lim_{\lambda \to \infty} \mathcal{R}(\lambda) \to
 \infty$ (see \cite{P1} for more details). $v^\lambda$ and $u^\lambda$ satisfy rescaled problems
 analogous to \eqref{rescaled equation} and \eqref{rescaled Variational problem}. In particular,
 the term $\lambda^{(2-n)/n}$ in front of the time derivatives is replaced by
 $1/\log(\mathcal{R}(\lambda)) \to 0$ as $\lambda \to \infty$.

 \subsection{Convergence of radially symmetric solutions} We will recall the results on the
 convergence of radially symmetric solutions of (\ref{Stefan}) as derived in \cite{QV}. First, we
 collect some useful facts of radial solution of the Hele-Shaw problem and then use a comparison to
 have the information of radial solution of the Stefan problem. The radially symmetric solution of
 the Hele-Shaw problem  in the domain $|x| \geq a, t \geq 0$ is a pair of functions $p(x,t)$ and
 $R(t)$, where $p$ is of the form
 \begin{equation}
 \label{radial Hele-Shaw}
 p(x,t)= \begin{cases}
 \displaystyle \frac{Aa^{n-2}\left(|x|^{n-2}-R^{n-2}(t)\right){+}}{a^{2-n}-R^{2-n}(t)}, &n \geq 3,\\
 \displaystyle \frac{A\left( \log\frac{R(t)}{|x|}\right)_{+}}{\log \frac{R(t)}{a}}, & n=2,
 \end{cases}
 \end{equation}
 and $R(t)$ satisfies a certain algebraic equation (see \cite{QV} for details).

 This solution satisfies the boundary conditions and initial conditions
 \begin{equation}
 \label{boundary con1}
 \begin{aligned}
p(x,t)&=Aa^{2-n} &&\mbox{ for } |x|=a>0,\\
p(x,t)&=0 &&\mbox{ for } |x| =R(t),\\
R'(t)&=\frac{1}{L} |Dp| &&\mbox{ for } |x| =R(t),\\
R(0)&=b >a.
 \end{aligned}
 \end{equation}
 Furthermore,
 \begin{align*}
 \lim\limits_{t \rightarrow \infty}\frac{R(t)}{c_\infty t^{1/n}}&=1,  & c_\infty &=
 \left(\frac{An(n-2)}{L}\right)^{1/n} &\mbox{ if } n \geq3,\\
 \lim\limits_{t \rightarrow \infty} \frac{R(t)}{c_\infty\left(t/\log t\right)^{1/2}} &=1, &
 c_\infty &= 2\sqrt{A/L} &\mbox{ if } n=2.
 \end{align*}
 In dimension $n=2$, we will also use $ \lim\limits_{t \rightarrow \infty} \frac{\log R(t)}{\log t}=\frac{1}{2}$.

 The radial solution of the Stefan problem satisfies the corresponding conditions similar to
 (\ref{boundary con1}) with the initial data
 \begin{equation}
 \label{initial cond radial}
 \theta(x,0) =\theta_0(|x|) \mbox{ if } |x| \geq a.
 \end{equation}
 The following results were shown in \cite{QV}.
 \begin{lemma}[cf. {\cite[Proposition~6.1]{QV}}]
Let $p$ and $\theta$ be radially symmetric solutions to the Hele-Shaw problem and to the Stefan
 problem respectively, and let $\{|x|=R_p(t)\}, \{|x|=R_\theta(t)\}$ be the corresponding interfaces. If $R_p(0)> R_\theta(0), p(x,0)\geq \theta(x,0)$ and, moreover, $p(x,t) \geq \theta(x,t)$ on the fixed boundary, that is, for $|x| =a, t>0$, then $p(x,t) \geq \theta(x,t)$ for all $|x| \geq a$ and $t \geq 0$.
\end{lemma}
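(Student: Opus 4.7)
The plan is a first-crossing-time argument for the two radii. Set
\[
\tau := \inf\{t \geq 0 : R_p(t) < R_\theta(t)\}.
\]
Since $R_p(0) > R_\theta(0)$ we have $\tau > 0$, and by continuity $R_p(t) \geq R_\theta(t)$ on $[0,\tau]$ with equality at $t = \tau$ if $\tau < \infty$. I would first show $p \geq \theta$ on this time interval by an elliptic comparison at each fixed time, and then rule out $\tau < \infty$ using the Hopf boundary point lemma combined with the free boundary velocity law.

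For the elliptic step, fix $t \in [0,\tau]$ and consider the annulus $A_t := \{a \leq |x| \leq R_\theta(t)\}$. The function $p(\cdot,t)$ is harmonic on $A_t$, directly from the radial form \eqref{radial Hele-Shaw}. For $\theta(\cdot,t)$, the standard monotonicity of radial Stefan solutions with non-decreasing fixed boundary data and initial data below the boundary value (proved by a time-shift comparison) gives $\theta_t \geq 0$; hence $\Delta \theta = \theta_t \geq 0$ and $\theta(\cdot,t)$ is subharmonic on $A_t$. Consequently $p - \theta$ is superharmonic, is $\geq 0$ on $|x| = a$ by hypothesis, and equals $p \geq 0$ on $|x| = R_\theta(t)$ since $R_\theta(t) \leq R_p(t)$. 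The maximum principle yields $p \geq \theta$ throughout $A_t$; outside the positive set of $\theta$ the desired inequality $p \geq \theta = 0$ is trivial.

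Now assume for contradiction $\tau < \infty$, and set $R^* := R_p(\tau) = R_\theta(\tau)$. The radial function $w := p(\cdot,\tau) - \theta(\cdot,\tau)$ is nonnegative and superharmonic on $\{a \leq |x| \leq R^*\}$, vanishes at $|x| = R^*$, and is not identically zero (otherwise the free boundaries would coincide throughout $[0,\tau]$, contradicting $R_p(0) > R_\theta(0)$). The Hopf boundary point lemma (or a direct ODE computation, since $w$ is radial) gives $\partial_r w(R^*) < 0$, i.e., $|Dp|(R^*,\tau) > |D\theta|(R^*,\tau)$. The common velocity law $R'(t) = |D\cdot|/L$ from \eqref{boundary con1} then yields $R'_p(\tau) > R'_\theta(\tau)$. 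On the other hand, $f(t) := R_p(t) - R_\theta(t)$ satisfies $f \geq 0$ on $[0,\tau]$ with $f(\tau) = 0$, so the left derivative obeys $f'(\tau^-) \leq 0$, forcing $R'_p(\tau) \leq R'_\theta(\tau)$, a contradiction. Hence $\tau = \infty$, and the elliptic comparison applies for all $t \geq 0$.

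The main technical obstacle is ensuring the rigidity in the Hopf step and the one-sided differentiability of $R_p$ and $R_\theta$ at $\tau$; both are consequences of the classical smoothness of radial Hele-Shaw and Stefan solutions and the explicit formulas recalled in this subsection, but should be invoked carefully. The monotonicity $\theta_t \geq 0$ used in the elliptic step is also a point to justify rigorously for radial data satisfying \eqref{initial cond radial} with the chosen fixed-boundary value, though it is standard in the radial setting.
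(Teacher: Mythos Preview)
The paper does not prove this lemma; it is quoted from \cite[Proposition~6.1]{QV} without argument, so there is no in-paper proof to compare against. Your first-contact-time strategy (elliptic comparison while $R_p \geq R_\theta$, then Hopf at the contact time) is the standard route and is essentially how the result is proved in \cite{QV}.

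There is, however, one genuine gap. Your justification that $w := p(\cdot,\tau) - \theta(\cdot,\tau)$ is not identically zero is incorrect: from $w \equiv 0$ at the single time $\tau$ you cannot conclude that ``the free boundaries would coincide throughout $[0,\tau]$''. Nothing rules out $R_p > R_\theta$ on $[0,\tau)$ while $p(\cdot,\tau) = \theta(\cdot,\tau)$ at contact. In that degenerate situation $|Dp| = |D\theta|$ at $R^*$, so the velocity law gives only $R'_p(\tau) = R'_\theta(\tau)$, which is compatible with $f'(\tau^-) \leq 0$; no contradiction arises and the argument stalls. The clean fix is a strict-separation perturbation: run your argument with the Hele-Shaw solution $p_\varepsilon$ having inner boundary datum $p|_{|x|=a} + \varepsilon$ (and/or initial radius $R_p(0) + \varepsilon$). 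Then $p_\varepsilon - \theta > 0$ on $|x| = a$, so $w_\varepsilon$ is strictly positive at the inner boundary and cannot vanish identically; Hopf applies, you get $p_\varepsilon \geq \theta$ for all $t$, and letting $\varepsilon \to 0$ recovers the claim.

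Your second flagged point, the monotonicity $\theta_t \geq 0$, is also not automatic from the hypotheses as stated: it requires a compatibility condition on $\theta_0$ (e.g.\ $\theta_0$ below the harmonic profile with the same boundary values, so that the time-shift comparison gives $\theta(\cdot,t) \leq \theta(\cdot,t+h)$). For the barrier constructions actually used later in the paper this is arranged by the choice of $\theta_0$, but for the lemma in isolation you should either verify it from \eqref{initial cond radial} or note it as an implicit hypothesis inherited from \cite{QV}.
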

This immediately leads to an upper bound for the free boundary of radial solutions of Stefan problem, see Corollary 6.2, Theorem 6.4, Theorem 7.1 in \cite{QV}.
\begin{lemma}
  \label{free boundary bound}
  Let $\{|x|=R(t)\}$ be the free boundary of a radial solution to the Stefan problem satisfying the corresponding conditions (\ref{boundary con1}) and (\ref{initial cond radial}). There are constants $C,T>0$, such that, for all $t \geq T$,
  \begin{equation*}
  \begin{matrix}
    &  R(t) \leq Ct^{1/n},  n \geq 3,& \mbox{ or }
    &  R(t) \leq C(t/ {\log t})^{1/2},  n=2.
  \end{matrix}
  \end{equation*}
  Moreover, we have
  \begin{equation*}
  \begin{matrix}
  &\lim\limits_{t \rightarrow \infty} \displaystyle \frac{R(t)}{t^{1/n}}=
\left(An(n-2)/L\right)^{1/n}, n \geq 3,& \mbox{ or }
  &\lim\limits_{t \rightarrow \infty} \displaystyle \frac{R(t)}{(t/ \log t)^{1/2}}= 2 \sqrt{A/L}, n=2.
  \end{matrix}
  \end{equation*}
\end{lemma}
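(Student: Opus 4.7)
The plan is to squeeze $R(t)$ between two radial Hele-Shaw free boundaries, using the explicit asymptotics of the latter recorded in \eqref{radial Hele-Shaw} and the comparison lemma stated just above.

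For the upper bound, I would choose a radial Hele-Shaw solution $p$ with initial radius $R_p(0) > R_\theta(0)$, with $p(\cdot,0) \geq \theta_0$ on $|x| \geq a$, and matching the Stefan boundary value $Aa^{2-n}$ on $|x|=a$. The previous lemma then gives $p \geq \theta$ throughout $\{|x|\geq a\} \times [0,\infty)$; since both profiles are radial and strictly positive exactly on the annulus inside their respective free boundaries, this forces $R(t) \leq R_p(t)$. Inserting the Hele-Shaw asymptotic $R_p(t)/t^{1/n} \to (An(n-2)/L)^{1/n}$ for $n \geq 3$ (and the analogue $R_p(t)/(t/\log t)^{1/2} \to 2\sqrt{A/L}$ for $n=2$) immediately delivers the linear-growth bound $R(t) \leq C t^{1/n}$ for $t \geq T$ large, along with the upper half of the exact-limit statement, namely $\limsup_{t\to\infty} R(t)/t^{1/n} \leq (An(n-2)/L)^{1/n}$.

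For the reverse inequality in the exact limit I would construct a Hele-Shaw subsolution. Fix $\varepsilon > 0$ and take $p_\varepsilon$ with boundary value $(A-\varepsilon)a^{2-n}$ on $|x|=a$ and with initial radius $R_{p_\varepsilon}(0)$ small enough that $p_\varepsilon(\cdot,0) \leq \theta_0$ on $|x|\geq a$ (possible because $\theta_0(a) = Aa^{2-n} > p_\varepsilon(a,0)$ and the positive set of $p_\varepsilon(\cdot,0)$ can be made an arbitrarily thin collar around $|x|=a$). A comparison argument symmetric to the one invoked above, which is available since the parabolic maximum principle works in either direction once the ordering of the fixed-boundary values is reversed, yields $p_\varepsilon \leq \theta$, hence $R_{p_\varepsilon}(t) \leq R(t)$. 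Sending $t \to \infty$ and applying the Hele-Shaw asymptotic, then letting $\varepsilon \to 0^+$, gives $\liminf_{t\to\infty} R(t)/t^{1/n} \geq (An(n-2)/L)^{1/n}$, closing the squeeze.

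The main technical obstacle is justifying the reverse comparison in a way compatible with the limit $\varepsilon \to 0$: the Stefan solution stores positive heat in its initial data that the Hele-Shaw pressure cannot see, so a direct comparison against a full-strength Hele-Shaw can fail. The $\varepsilon$-deficit in the boundary datum and the strict shrinking of the initial free boundary provide enough margin for the maximum principle, and this deficit vanishes in the asymptotic limit. Alternatively, one may invoke directly the proofs of Corollary~6.2, Theorem~6.4 and Theorem~7.1 in \cite{QV}, which carry out exactly this analysis. The case $n=2$ is structurally identical, using the rescaling from \eqref{rescaling n=2} and the logarithmically corrected Hele-Shaw asymptotic; the squeeze argument then produces both the upper bound $R(t) \leq C(t/\log t)^{1/2}$ and the limit $R(t)/(t/\log t)^{1/2} \to 2\sqrt{A/L}$.
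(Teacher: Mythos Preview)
Your upper-bound sketch is correct and is precisely the intended application of the comparison lemma stated just before; the paper itself gives no proof here beyond citing Corollary~6.2, Theorem~6.4 and Theorem~7.1 of \cite{QV}, so your closing ``alternatively, invoke \cite{QV}'' already matches the paper's treatment.

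The sketch for the lower half of the limit, however, has a real gap. The ``symmetric'' comparison you invoke does not hold: a radial Hele-Shaw pressure with fixed inner boundary value is strictly \emph{increasing} in time throughout its positive set (from the explicit profile one checks $\partial p/\partial R = \beta'(R)\,(|x|^{2-n}-a^{2-n})>0$ for $a<|x|<R$, and $R'(t)>0$), so $p_t-\Delta p=p_t>0$ and $p$ is a strict \emph{supersolution} of the heat equation, never a subsolution. Consequently no $\varepsilon$-deficit at $|x|=a$ and no shrinking of the initial support can force $p_\varepsilon\leq\theta$ via the parabolic maximum principle: those devices arrange only the ordering on the parabolic boundary and do nothing about the interior differential inequality, which has the wrong sign. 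This asymmetry is exactly why the comparison lemma you cite (Proposition~6.1 of \cite{QV}) is stated in one direction only. The matching $\liminf$ in \cite{QV} is obtained by a different argument (an integral/flux balance for the radial problem together with the near-field limit), so your fallback citation is sound, but the barrier heuristic you offer for it would not go through.
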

The solution of the Stefan problem is bounded for all time.
\begin{lemma}[cf.{\cite[Lemma 6.3]{QV}}]
  \label{Boundedness of weak solution}
  Let $\theta$ be a weak solution of the Stefan problem for $n \geq 2$. There is a constant $C>0$
  such that, for all $t > 0$,
  $0 \leq \theta(x,t) \leq C |x|^{2-n}.$
\end{lemma}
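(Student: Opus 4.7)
The plan is to construct an explicit stationary supersolution of the Stefan problem that bounds $\theta$ from above, then invoke the comparison principle. The natural candidate is the (truncated) fundamental solution
\[
\Phi(x) := \begin{cases} C|x|^{2-n}, & n \geq 3,\\ C, & n = 2,\end{cases}
\]
defined on $\mathbb{R}^n \setminus \{0\}$. For $n\geq 3$, $\Phi$ is harmonic away from the origin, strictly positive everywhere, and decays at infinity; for $n=2$ the constant function is trivially harmonic and strictly positive. In either case $\Phi$ has no free boundary in $\mathbb{R}^n \setminus K$, so the Stefan free boundary velocity condition is vacuous and $\Phi$ is a classical (stationary) supersolution of \eqref{Stefan}.

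Next I would fix the constant $C$ so that $\Phi$ dominates the data. Since $v_0$ is continuous, bounded, and supported in the bounded set $\overline{\Omega_0}$, and since $\inf_{\overline{\Omega_0}} |x|^{2-n} > 0$ (for $n \geq 3$), it is enough to choose $C$ large enough that $\Phi \geq 1 \geq v_0$ on $K$ (and thus $\Phi \geq v_0$ on $\Omega_0$) and $\Phi \geq 0 = v_0$ on $\Omega_0^c$. The boundary condition $\theta = 1$ on $\partial K$ is then also dominated by $\Phi$.

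To conclude, I would apply the comparison principle between the weak/viscosity solution $\theta$ and the classical supersolution $\Phi$. Using the coincidence of weak and viscosity solutions together with the comparison lemma stated above, one obtains $\theta(x,t) \leq \Phi(x)$ for all $(x,t) \in (\mathbb{R}^n \setminus K) \times [0,\infty)$, which is exactly the desired estimate. A minor technical point is that the weak solution is defined on a truncated ball $B_R$; however, by Lemma~\ref{free boundary bound} the free boundary is contained in a ball of radius growing like $t^{1/n}$ (resp.\ $(t/\log t)^{1/2}$), so for any finite $t$ one may take $R$ large enough that $\theta \equiv 0$ on $\partial B_R$ and the comparison on $B_R$ applies; letting $R \to \infty$ gives the global bound.

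The only mild obstacle is the verification that $\Phi$, which does not vanish, qualifies as an admissible supersolution to be compared with $\theta$. This is handled by the observation that the free boundary condition in Definition~\ref{def of viscos subsol} is a condition only at points of $\Gamma(v)$, and $\Phi$ has none in $\mathbb{R}^n \setminus K$; alternatively one can compare directly at the level of the variational inequality \eqref{Variational problem} since $\int_0^t \Phi\, ds = t\Phi(x)$ is a classical supersolution of the obstacle problem with the obstacle automatically inactive.
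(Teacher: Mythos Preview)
Your argument is correct and is exactly the approach of \cite[Lemma~6.3]{QV}, which the paper cites without reproducing a proof: compare $\theta$ with the stationary harmonic barrier $\Phi(x)=C|x|^{2-n}$ (a constant when $n=2$), with $C$ chosen large enough to dominate both the initial data and the fixed boundary value.

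One small caveat on the technical aside about the truncation radius $R$: Lemma~\ref{free boundary bound} as stated applies only to \emph{radial} solutions, so you cannot invoke it directly for a general weak solution $\theta$; doing so via the later Lemma~\ref{viscos free boundary bound} would be circular, since that lemma's proof uses the present one. The cleanest fix is simply to drop this detour: the boundedness of the support of $u(\cdot,t)$ for each fixed $T$ (and hence the admissibility of sufficiently large $R$) is already part of the existence theory for the variational inequality, as noted at the beginning of Section~\ref{sec:weak-sol}. Alternatively, one may first dominate $\theta$ by a radial supersolution via the weak comparison principle and then apply Lemma~\ref{free boundary bound} to that radial barrier.
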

 Next, we define the solution of the Hele-Shaw problem with a point source, which will appear as
 the limit function in our convergence results,
 \begin{equation}
 \label{Hele-Shaw with point sorce}
 V(x,t)=V_{A,L}(x,t)=
 \begin{cases}
 A\left(|x|^{2-n}- \rho^{2-n}(t)\right)_{+}, & n \geq 3,\\
 A\left(\log \frac{\rho(t)}{|x|}\right)_{+}, & n=2,
 \end{cases}
 \end{equation}
 where
 \begin{equation*}
 \rho(t)= \rho_L(t)=R_\infty=
 \begin{cases}
 \left(An(n-2)t/L\right)^{1/n}, & n \geq 3,\\
 \left(2At/L\right)^{1/2}, & n = 2.
 \end{cases}
 \end{equation*}
 It is the unique solution of the Hele-Shaw problem with a point source,
 \begin{equation}
 \label{Hele-Shaw point source problem}
\left\{
  \begin{aligned}
 \Delta v &= 0 && \mbox{in } \Omega(v) \backslash \{0\},\\
 \lim\limits_{|x| \rightarrow 0} \displaystyle \frac{v(x,t)}{|x|^{2-n}}&= C_*, && n\geq 3, \quad
 \mbox{or} &\lim\limits_{|x| \rightarrow 0} \displaystyle -\frac{v(x,t)}{\log(|x|)}&= C_*, && n=2,\\
 \displaystyle v_t&=\frac{1}{L}|Dv|^2 && \mbox{on } \partial \Omega(v),\\
 v(x,0)&=0 && \mbox{in } \mathbb{R}^n \backslash \{0\}.
 \end{aligned}
 \right.
 \end{equation}
 The asymptotic result for radial solutions of the Stefan problem follows from Theorem 6.5 and Theorem 7.2 in \cite{QV}.
 \begin{theorem}[Far field limit]
   \label{far field radial}
   Let $\theta$ be the radial solution of the Stefan problem satisfying boundary conditions (\ref{boundary con1}) and initial condition (\ref{initial cond radial}). Then
   \begin{equation}
   \lim\limits_{t \rightarrow \infty} t^{(n-2)/n} |\theta(x,t) - V(x,t)|=0
   \end{equation}
   uniformly on sets of form $\{x \in \mathbb{R}^n: |x| \geq \delta t^{1/n}\}, \delta >0$ if $n \geq 3$, and
   \begin{equation}
   \displaystyle \lim\limits_{t \rightarrow \infty} \log \sqrt{\frac{2A}{L}}\mathcal{R}(t) \left |\theta(x,t) - \displaystyle \frac{A}{\log \sqrt{\frac{2A}{L}}\mathcal{R}(t)}\left(\log  \sqrt{\frac{2A}{L}}\mathcal{R}(t)- \log|x|\right)_+\right |=0
   \end{equation}
   uniformly on sets of form $\{x \in \mathbb{R}^n: |x| \geq \delta \mathcal{R}(t)\}, \delta >0$ if $n =2 $.
 \end{theorem}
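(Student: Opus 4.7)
My plan is to reduce the far-field statement to a convergence result for the rescaled radial solutions and then identify the limit by compactness plus uniqueness of the self-similar Hele-Shaw profile. For $n \geq 3$, I would set $\theta^\lambda(x,t) := \lambda^{(n-2)/n}\theta(\lambda^{1/n}x,\lambda t)$ as in Section~\ref{sec:rescaling}. Self-similarity of $V$ in \eqref{Hele-Shaw with point sorce} gives $V(\lambda^{1/n}x,\lambda t) = \lambda^{(2-n)/n}V(x,t)$, so with the substitution $\lambda = t$, $x = y/t^{1/n}$ the claim is equivalent to
\[
\bigl|\theta^\lambda(x,1) - V(x,1)\bigr| \to 0 \quad \text{uniformly on } \{|x| \geq \delta\}, \text{ as } \lambda \to \infty.
\]
The advantage is that the $\lambda$-dependent objects---$K^\lambda = \lambda^{-1/n}K$ shrinking to $\{0\}$, the rescaled free boundary $R^\lambda(t) := \lambda^{-1/n}R(\lambda t)$---are uniformly controlled on compact subsets of $\mathbb{R}^n \setminus \{0\}$.

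Next I would extract a limit. Lemma~\ref{Boundedness of weak solution} yields $\theta^\lambda(x,t) \leq C|x|^{2-n}$ uniformly in $\lambda$, and Lemma~\ref{free boundary bound} gives $R^\lambda(t) \to c_\infty t^{1/n}$. The rescaled equation $\lambda^{(2-n)/n}\theta^\lambda_t - \Delta\theta^\lambda = 0$, combined with interior parabolic H\"older estimates on compact subsets of $\{\delta \leq |x| \leq R^\lambda(t) - \delta\}$, produces a locally uniformly convergent subsequence $\theta^{\lambda_k} \to \tilde V$ on $(\mathbb{R}^n \setminus \{0\}) \times (0,\infty)$. Since $\lambda^{(2-n)/n} \to 0$, the limit $\tilde V(\cdot,t)$ is harmonic in its positive set away from the origin. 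The fixed boundary data $\theta^\lambda = \lambda^{(n-2)/n}$ on $K^\lambda$, combined with comparison against the radial fundamental solution, forces the point-source asymptotics $\tilde V(x,t) \sim A|x|^{2-n}$ as $|x| \to 0$, with $A$ determined by $n$ and the capacity of $K$. For the free boundary, radial symmetry reduces the velocity law to the ODE $(R^\lambda)'(t) = \frac{1}{L}|\partial_r\theta^\lambda(R^\lambda(t),t)|$, and passing to the limit produces $v_t = \frac{1}{L}|Dv|^2$ on $\partial\Omega(\tilde V)$. Thus $\tilde V$ solves \eqref{Hele-Shaw point source problem}, and by uniqueness of the self-similar solution $V = V_{A,L}$ the full family converges.

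The $n=2$ case proceeds analogously with the logarithmic rescaling \eqref{rescaling n=2}: the limit is again harmonic, the boundary data on $K^\lambda$ combined with the logarithmic fundamental solution produces the correct logarithmic profile, and $\mathcal{R}(\lambda)$ is designed precisely to balance the logarithmic scales between the Stefan and Hele-Shaw problems. The main obstacle I anticipate is justifying passage to the limit in the free boundary velocity uniformly in $\lambda$: this requires control of $\partial_r \theta^\lambda$ up to the free boundary, which in the radial setting follows from the smoothness of classical radial solutions (the corollary in Section~\ref{sec:weak-sol}) together with monotonicity of $R$, but must be argued carefully because the rescaled equation degenerates from parabolic to elliptic in the limit. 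A secondary difficulty is the degeneracy at $t = 0^+$, where $\Omega_t(\theta^\lambda)$ concentrates near the origin; proving convergence on intervals $[\tau,T]$ with $\tau > 0$ and then invoking continuity of $V_{A,L}$ up to $t=0$ away from the origin circumvents this.
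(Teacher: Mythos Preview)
The paper's proof is a direct citation to the corresponding results in \cite{QV} (Theorems~6.5 and 7.2 there), with only a cosmetic adjustment of the function $\mathcal{R}_1$ in dimension two. Your proposal instead gives a self-contained rescaling--compactness--identification argument. That is a genuinely different route, and your reduction of the statement to the uniform convergence $\theta^\lambda(\cdot,1)\to V(\cdot,1)$ on $\{|x|\ge\delta\}$ is correct and clean.

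Two comments on the execution. First, you make the identification step harder than it needs to be. You propose to pass to the limit in the free-boundary ODE $(R^\lambda)'=\tfrac1L|\partial_r\theta^\lambda|$, and you flag the required uniform-in-$\lambda$ control of $\partial_r\theta^\lambda$ up to the free boundary as the main obstacle. But this is unnecessary: Lemma~\ref{free boundary bound} already gives $R^\lambda(t)\to \rho(t)$. Hence any subsequential limit $\tilde V(\cdot,t)$ is harmonic in $\{0<|x|<\rho(t)\}$, vanishes on $\{|x|\ge\rho(t)\}$, and has the singularity $A|x|^{2-n}$ at the origin (which, in this radial setting with $K=\overline{B_a}$ and boundary value $Aa^{2-n}$, is read off directly from the data rather than from a capacity computation). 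These three facts pin down $\tilde V(\cdot,t)=V_{A,L}(\cdot,t)$ for each $t>0$ without touching the velocity law or the initial condition; uniqueness of the limit then upgrades subsequential to full convergence.

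Second, your interior parabolic estimates give equicontinuity only on $\{\delta\le|x|\le R^\lambda(t)-\delta\}$, strictly inside the positive set. To conclude uniform convergence on all of $\{|x|\ge\delta\}$ you still need $\theta^\lambda$ uniformly small in the strip $\{R^\lambda(t)-\delta\le|x|\le R^\lambda(t)\}$. This is exactly where the Hele-Shaw comparison (Lemma preceding Lemma~\ref{free boundary bound}, i.e.\ \cite[Proposition~6.1]{QV}) enters: $\theta\le p$ for a radial Hele-Shaw solution $p$ with slightly larger data, and the rescaled $p^\lambda$ converge explicitly, giving the needed barrier near the free boundary. You should make this step explicit; without it the compactness argument does not close across $\Gamma(\theta^\lambda)$.
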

 \begin{proof}
   Follow the proof of Theorem 6.5 in \cite{QV} with recalling that we assume $\theta = Aa^{2-n}$ for $|x|=a$ we immediately get the result for $n=3$.

   For $n=2$, let $\mathcal{R}_1(t)$ be the solution of
$\frac{\mathcal{R}_1^2}{2}\left(\log \mathcal{R}_1- \frac{1}{2}\right)=\frac{At}{L}$ with
   $\lim\limits_{t \rightarrow \infty} \frac{\mathcal{R}_1(t)}{\mathcal{R}(t)}=\sqrt{\frac{2A}{L}}.$
   Thus, we can replace $\mathcal{R}_1(t)$ in Theorem 7.2 in \cite{QV} by $ \sqrt{\frac{2A}{L}}\mathcal{R}(t)$.
 \end{proof}
 Finally, we can improve Theorem \ref{far field radial} to have the following convergence result
 for rescaled radial solutions of the Stefan problem which holds up to $t=0$.
 \begin{lemma}[Convergence for radial case]
   \label{convergence radial lemma}
   Let $\theta(x,t)$ be a radial solution of the Stefan problem satisfying the corresponding boundary  and initial condition. Then $\theta^\lambda$ converge locally uniformly to $V_{A,L}$ in the set $(\mathbb{R}^n \backslash \{0\}) \times [0,\infty)$.
 \end{lemma}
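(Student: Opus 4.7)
The plan is to deduce the rescaled statement from the far-field Theorem~\ref{far field radial} on the region where $t$ is bounded away from $0$, and to control the short-time region by the free-boundary bound from Lemma~\ref{free boundary bound}. The key structural fact is that the limit profile $V := V_{A,L}$ is essentially invariant under the rescaling: for $n \geq 3$, since $\rho(\lambda t) = \lambda^{1/n} \rho(t)$, a direct computation from \eqref{Hele-Shaw with point sorce} yields $V(\lambda^{1/n}x, \lambda t) = \lambda^{-(n-2)/n} V(x,t)$, i.e.\ $V^\lambda = V$. For $n = 2$ the corresponding identity holds only up to a lower-order correction which vanishes thanks to $\log \mathcal{R}(\lambda t)/\log\mathcal{R}(\lambda) \to 1$.

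Fix a compact set of the form $K_0 = \{\delta \leq |x| \leq M\} \times [0, T]$ and pick $t_0 \in (0, T)$ so small that $\rho(t_0) < \delta/2$. On $K_0 \cap \{t \geq t_0\}$, self-similarity of $V$ gives, for $n \geq 3$,
\begin{equation*}
|\theta^\lambda(x,t) - V(x,t)| = \lambda^{(n-2)/n} \bigl|\theta(\lambda^{1/n}x, \lambda t) - V(\lambda^{1/n}x, \lambda t)\bigr|.
\end{equation*}
Setting $y = \lambda^{1/n} x$ and $s = \lambda t$, one has $|y| \geq (\delta / T^{1/n}) s^{1/n}$, placing $(y, s)$ in the regime of Theorem~\ref{far field radial}. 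Rewriting the right-hand side as $t^{-(n-2)/n} \cdot s^{(n-2)/n}|\theta(y,s) - V(y,s)|$ and using $t \geq t_0$ to bound the prefactor yields uniform convergence to zero. The $n = 2$ case is handled analogously using \eqref{rescaling n=2} and the second part of Theorem~\ref{far field radial}.

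On the slab $K_0 \cap \{0 \leq t \leq t_0\}$ we have $V \equiv 0$ by the choice of $t_0$, so it suffices to show $\theta^\lambda \equiv 0$ there for large $\lambda$. The support of $\theta^\lambda(\cdot, t)$ is contained in $\overline{B_{R^\lambda(t)}}$ with $R^\lambda(t) := R(\lambda t)/\lambda^{1/n}$ (or $R(\lambda t)/\mathcal{R}(\lambda)$ if $n=2$). By monotonicity of $R$ and Lemma~\ref{free boundary bound},
\begin{equation*}
R^\lambda(t) \leq R^\lambda(t_0) = \frac{R(\lambda t_0)}{(\lambda t_0)^{1/n}} \, t_0^{1/n} \xrightarrow[\lambda \to \infty]{} \rho(t_0) < \delta/2,
\end{equation*}
so $\theta^\lambda(x,t) = 0$ on $K_0 \cap \{t \le t_0\}$ for all $\lambda$ sufficiently large.

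The main technical obstacle is the $n = 2$ case, where $V$ is not literally invariant under the rescaling \eqref{rescaling n=2} and one must track the competing logarithms $\log\mathcal{R}(\lambda)$ and $\log\mathcal{R}(\lambda t)$ to verify that the discrepancy vanishes in the limit. Once this invariance (exact for $n \geq 3$, asymptotic for $n = 2$) is in place, the two regime estimates above combine to give locally uniform convergence on $(\mathbb{R}^n \setminus \{0\}) \times [0, \infty)$.
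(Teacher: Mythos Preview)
Your proposal is correct and follows essentially the same approach as the paper's own proof: both exploit the self-similarity $V^\lambda = V$ (exact for $n\geq 3$, asymptotic for $n=2$), split the time interval at a small $t_0$ chosen so that $\rho(t_0)$ is below the spatial cutoff, handle $\{t\geq t_0\}$ via Theorem~\ref{far field radial} after rewriting $|\theta^\lambda - V|$ as $t^{(2-n)/n}\tau^{(n-2)/n}|\theta(\xi,\tau)-V(\xi,\tau)|$, and kill the slab $\{0\leq t\leq t_0\}$ using the free-boundary asymptotics of Lemma~\ref{free boundary bound}. The only cosmetic difference is that you phrase the short-time step as $R^\lambda(t_0)\to\rho(t_0)<\delta/2$ while the paper writes $R^\lambda(t_0)<\rho(t_0)+\varepsilon/2=\varepsilon$; these are the same estimate.
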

 \begin{proof}
   We will prove the uniform convergence in the sets $Q=\{(x,t): |x| \geq \varepsilon, 0 \leq t \leq T\}$ for some $\varepsilon, T >0$ and use notation $V=V_{A,L}$.
   We consider the case $ n \geq 3$ first.
   Set $\xi =\lambda^{1/n}x, \tau = \lambda t$ then an easy computation leads to   $V(x,t)=\lambda^{(n-2)/n}V(\xi, \tau)$. Let $t_0 = \rho ^{-1}(\varepsilon /2)$. We split the proof into two cases:
   \begin{enumerate}[label=(\alph*)]
     \item   When $0\leq t \leq t_0$: Clearly from the formula,  we have $V(x,t)=0 \mbox{ in } \{(x,t): |x| \geq \varepsilon, 0 \leq t \leq t_0\}.$
     Besides,   for $\lambda$ large enough,
     \begin{align*}
     R_\lambda(t)= \frac{R(\lambda t)}{\lambda^{1/n}}\leq \frac{R(\lambda t_0)}{\lambda^{1/n}}<
     \rho(t_0)+\frac{\varepsilon}{2} = \varepsilon \mbox{  (due to Proposition \ref{free boundary
     bound}).}
     \end{align*}
     Thus, $\theta^\lambda =0=V$ in $\{(x,t): |x| \geq \varepsilon, 0 \leq t \leq t_0\}$ for $\lambda$ large enough.
     \item  When $t_0 \leq t \leq T$, we have:
     \begin{align}
     \label{second case}
     |\theta^\lambda(x,t) -V(x,t)| = t^{(2-n)/n} \tau^{(n-2)/n}|\theta(\xi, \tau) - V(\xi, \tau)|
     \end{align}
     Since $t_0 \leq t \leq T$, $t^{(2-n)/n} $ is bounded. From Theorem \ref{far field radial}, the right hand side of (\ref{second case}) converges to $0$ uniformly in the sets $\{\xi \in \mathbb{R}^n: |\xi| \geq \delta \tau^{1/n}\}=\{x \in \mathbb{R}^n: |x| \geq \delta t^{1/n}\} \supset \{(x,t): |x| \geq \varepsilon, t_0 \leq t \leq T\}$ for fixed $\varepsilon$ and $\delta>0$ small enough and thus we obtain the convergence for $n \geq 3$.
   \end{enumerate}
   For $n=2$, we argue similar as the case $n \geq 3$, but noting that
$\lim\limits_{\lambda \rightarrow \infty} \frac{\mathcal{R}(\tau)}{\mathcal{R(\lambda)}}= t^{1/2} $
together with Theorem \ref{far field radial}.
 \end{proof}
 \subsection{Some more results for viscosity solutions}

 Following \cite{P1,QV}, we also can state some results for viscosity solutions.
 \begin{lemma}
   For $L=1/m$ (resp. $L=1/M$), with $m,M$ as in (\ref{condition in g}), let $\theta(x,t)$ be the radial solution of Stefan problem (\ref{Stefan}) satisfying boundary conditions (\ref{boundary con1}) and initial condition (\ref{initial cond radial}) with $g(x)=1/L$ and $a$ such that $B(0,a) \subset K$ (resp. $K \subset B(0,a)$). Then the function $\theta(x,t)$ is a viscosity subsolution (resp. supersolution) of the Stefan problem (\ref{Stefan}) in $Q$.
 \end{lemma}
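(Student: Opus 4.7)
The plan is to show that $\theta$ is a classical (and therefore viscosity) subsolution of the variable-$g$ Stefan problem; the supersolution case is entirely symmetric. The conceptual point is that $\theta$ evolves with the \emph{constant} velocity law $V_\nu = m|D\theta|$, and condition (\ref{condition in g}) gives $m|D\theta| \leq g(x)|D\theta|$ pointwise, so the radial model moves no faster than the inhomogeneous problem allows, and classically solves the heat equation in its positive set.

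First I would verify the structural requirements in Definitions \ref{def of viscos subsol}: condition (a), boundedness of $\overline{\Omega(\theta)} \cap \{t \leq T\} \cap Q$, is immediate from Lemma \ref{free boundary bound}; the hypothesis $B(0,a) \subset K$ makes $\theta$ well-defined on $Q$; and choosing $A = a^{n-2}$ so that $\theta = 1$ on $\{|x|=a\}$, radial monotonicity of $\theta$ in $|x|$ yields $\theta \leq 1$ on $\partial K$.

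Next I check the two pointwise tests. For (i), at any interior touching point $(x_0,t_0) \in \Omega(\theta) \cap Q$ where $\theta - \phi$ has a local maximum over $\overline{\Omega(\theta)} \cap \{t \leq t_0\} \cap Q$, the function $\theta$ is smooth and solves $\theta_t - \Delta\theta = 0$ classically, so elementary calculus (using the one-sided time derivative, since the max is taken over $t \leq t_0$) gives $(\phi_t - \Delta\phi)(x_0,t_0) \leq 0$. For (ii), fix $(x_0,t_0) \in \Gamma(\theta) \cap Q$ with $|D\phi(x_0,t_0)| \neq 0$ and $(\phi_t - \Delta\phi)(x_0,t_0) > 0$. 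Radial symmetry ensures $\Gamma(\theta)$ is smooth near $(x_0,t_0)$, $\theta \in C^1$ up to $\Gamma(\theta)$ from inside with $|D\theta(x_0,t_0)| \neq 0$, and the free boundary velocity condition reads, as the one-sided limit from inside, $\theta_t(x_0,t_0) = m|D\theta(x_0,t_0)|^2$. Normalizing so $\phi(x_0,t_0) = 0$, we have $\theta - \phi \leq 0$ on $\overline{\Omega(\theta)}$ near $(x_0,t_0)$. Evaluating this inequality at $(x_0 + \varepsilon w, t_0 + \varepsilon s)$ for small $\varepsilon > 0$ and any direction $(w,s) \in \mathbb{R}^{n+1}$ with $D\theta \cdot w + \theta_t s \geq 0$ (exactly the directions into $\overline{\Omega(\theta)}$ at first order) yields
\begin{equation*}
D(\theta - \phi)(x_0,t_0) \cdot w + (\theta - \phi)_t(x_0,t_0)\, s \leq 0.
\end{equation*}
A standard fact about linear functionals nonnegative on a half-space forces
\begin{equation*}
(D\phi, \phi_t)(x_0,t_0) = \beta\, (D\theta, \theta_t)(x_0,t_0), \qquad \beta \geq 1,
\end{equation*}
so $\phi_t = \beta\, m |D\theta|^2 = (m/\beta)|D\phi|^2 \leq m|D\phi|^2 \leq g(x_0)|D\phi|^2$, which is condition (ii).

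The main obstacle is the clean extraction of $\beta \geq 1$ from the touching configuration, which relies on the smoothness of $\Gamma(\theta)$ (guaranteed by radiality) and on the non-degeneracy $|D\theta(x_0,t_0)| \neq 0$. For the supersolution half, the argument is symmetric after extending $\theta$ by the constant value on $\overline{B(0,a)} \setminus K \subset Q$ (possible since $K \subset B(0,a)$): the analogous first-order analysis at a minimum of $\theta - \phi$ now gives $\beta \leq 1$, and the chain of inequalities reverses using $g(x_0) \leq M$ with $L = 1/M$.
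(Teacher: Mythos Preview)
Your proof is correct and follows exactly the approach the paper sketches in one line: radial solutions are classical, classical solutions are viscosity solutions, and the pointwise bound $m \leq g(x) \leq M$ turns a solution of the constant-$g$ problem into a sub/supersolution of the variable-$g$ problem. One small imprecision: in your free-boundary test (ii) the maximum is taken over $\overline{\Omega(\theta)} \cap \{t \leq t_0\}$, so the first-order comparison only applies to directions with $s \leq 0$; this yields $D\phi = \beta D\theta$ with $\beta \geq 1$ (from $s=0$) and $\phi_t \leq \beta\theta_t$ (from backward-in-time tangent directions) rather than the full equality you claim, but $\phi_t \leq \beta\theta_t = (m/\beta)|D\phi|^2 \leq g(x_0)|D\phi|^2$ is already what you need.
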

 \begin{proof}
The statement follows directly from properties of radially solutions and the fact that a classical solution is also a viscosity solution.
 \end{proof}
 Using viscosity comparison principle, we also can get the same estimates for free boundary as in
 Proposition \ref{free boundary bound} and boundedness for a \textbf{general viscosity solution}.
 \begin{lemma}
   \label{viscos free boundary bound}
   Let $v$ be a viscosity solution of (\ref{Stefan}). There exists $t_0>0$ and constant $C, C_1, C_2>0$ such that for $t\geq t_0$,
   \begin{align*}
   C_1 t^{1/n} &< \min_{\Gamma_t(v)}|x|\leq \max_{\Gamma_t(v)}|x|< C_2t^{1/n} &&\mbox{if } n\geq 3,\\
    C_1 \mathcal{R}(t)&<\min_{\Gamma_t(v)}|x|\leq \max_{\Gamma_t(v)}|x|< C_2 \mathcal{R}(t) &&\mbox{if } n= 2,
   \end{align*}
   and for $0 \leq t\leq t_0$,
   $\displaystyle\max_{\Gamma_t(v)}|x|< C_2.$ Moreover,
   $0 \leq v(x,t) \leq C|x|^{2-n} \mbox{ for all } n \geq 2.$
 \end{lemma}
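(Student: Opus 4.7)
The plan is to sandwich $v$ between two explicit radially symmetric Stefan solutions: an outer barrier $\bar\theta$ built with the constant coefficient $g\equiv m$ (so $L=1/m$), whose free boundary expands as slowly as the bounds on $g$ permit, and an inner barrier $\underline{\theta}$ built with $g\equiv M$ (so $L=1/M$), whose free boundary expands as fast as permitted. By the preceding lemma, $\bar\theta$ is a viscosity supersolution and $\underline{\theta}$ a viscosity subsolution of \eqref{Stefan} in $Q$, so the asymptotics from Lemma~\ref{free boundary bound} and the pointwise bound from Lemma~\ref{Boundedness of weak solution} transfer from the barriers to $v$ via the comparison principle.

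For the outer barrier I would pick $a>0$ with $K\subset B(0,a)$ and $b\geq a$ so large that $\overline{\Omega_0}\subset B(0,b)$ and $v_0\leq \bar\theta(\cdot,0)$, where $\bar\theta$ is the radial Stefan solution outside $B(0,a)$ with boundary value $1$ on $\{|x|=a\}$, initial free boundary at $|x|=b$, and a smooth initial profile dominating $v_0$. Comparison then yields $v\leq \bar\theta$, and from the radial analysis $\max_{\Gamma_t(v)}|x|\leq R_{\bar\theta}(t)\leq C_2 t^{1/n}$ for $t\geq t_0$ (resp.\ $\leq C_2\mathcal{R}(t)$ for $n=2$) by Lemma~\ref{free boundary bound}, while $v(x,t)\leq \bar\theta(x,t)\leq C|x|^{2-n}$ by Lemma~\ref{Boundedness of weak solution}. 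The short-time bound $\max_{\Gamma_t(v)}|x|<C_2$ on $[0,t_0]$ follows by enlarging $b$ and $t_0$ if necessary, using continuity of $t\mapsto R_{\bar\theta}(t)$ at $t=0$.

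For the inner barrier I would choose $a>0$ with $B(0,a)\subset K$ and let $\underline{\theta}$ be the radial Stefan solution outside $B(0,a)$ with boundary value $1$ on $\{|x|=a\}$ and trivial initial data (initial free boundary at $|x|=a$). Since $v\equiv 1$ on $K\supset B(0,a)$ and $v\geq 0$, comparison gives $\underline{\theta}\leq v$ on $\{|x|\geq a\}\times[0,\infty)$, so $\Omega_t(\underline{\theta})\subset \Omega_t(v)$, which by Lemma~\ref{free boundary bound} yields $\min_{\Gamma_t(v)}|x|\geq R_{\underline{\theta}}(t)\geq C_1 t^{1/n}$ for $t\geq t_0$ (resp.\ $\geq C_1\mathcal{R}(t)$ if $n=2$).

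The main technical point is to align the initial/boundary data in a way that legitimately invokes the viscosity comparison principle: the radial barriers must be strictly separated from $v$ at $t=0$ and on $\partial K$. This is handled by a small dilation of the radial barriers (shrinking $\underline{\theta}$ and enlarging $\bar\theta$ slightly, or perturbing the radii $a,b$) so that the strict ordering required for the generalized comparison from the Kim--Mellet coincidence theorem holds, and then passing to the limit in the dilation parameter. The dimension $n=2$ case proceeds identically once $t^{1/n}$ is replaced by $\mathcal{R}(t)$, which is already the form in which Lemma~\ref{free boundary bound} is stated.
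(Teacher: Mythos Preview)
Your overall strategy is exactly what the paper intends (it refers to the argument in \cite{P1} together with Lemma~\ref{free boundary bound} and Lemma~\ref{Boundedness of weak solution}): sandwich $v$ between two radial Stefan solutions and read off the free-boundary bounds and the pointwise bound from the barriers. However, you have the roles of $m$ and $M$ reversed, and as written the comparison does not go through.

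The velocity law is $V_\nu=g(x)|Dv|$ with $m\le g\le M$, so larger $g$ means faster outward motion of the free boundary. A radial solution with constant coefficient $g\equiv M$ therefore outruns $v$ and is a \emph{supersolution} of \eqref{Stefan}; a radial solution with $g\equiv m$ lags behind and is a \emph{subsolution}. This is precisely the content of the preceding lemma: the case $L=1/m$ (hence $g=m$) with $B(0,a)\subset K$ yields the subsolution, and the case $L=1/M$ (hence $g=M$) with $K\subset B(0,a)$ yields the supersolution. Your outer barrier $\bar\theta$ must be built with $g\equiv M$ (not $m$), and your inner barrier $\underline\theta$ with $g\equiv m$ (not $M$); a barrier that ``expands as slowly as the bounds on $g$ permit'' cannot dominate $v$ from above. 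Note also that the combination you wrote, $g\equiv m$ together with $K\subset B(0,a)$, is not one of the two cases in the preceding lemma, so your appeal to it fails even formally. Once $m$ and $M$ are swapped, your choices of radii, initial profiles, and the strict-separation/dilation device are correct and the proof matches the paper (cf.\ the construction in the proof of Theorem~\ref{convergence of variational solutions}, where the upper radial barrier is taken with $L=1/M$).
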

 \begin{proof}
   Argue as in \cite{P1} with using Lemma \ref{free boundary bound} and Lemma \ref{Boundedness of weak solution} above.
 \end{proof}
We also have the near field limit and the asymptotic behavior result as in \cite{QV}.
 \begin{theorem}[Near-field limit]
   \label{Near field limit Theorem} The viscosity solution $v(x,t)$ of the Stefan problem
   (\ref{Stefan}) converges to the unique solution $P(x)$ of the exterior Dirichlet problem
   \begin{equation}
   \label{near filed limit}
   \left\{
     \begin{aligned}
   \Delta P&=0, && x \in \mathbb{R}^n \backslash  K,\\
   P&=1,&&x \in \Gamma,\\
   \lim\limits_{|x| \rightarrow \infty}P(x) &=0&& \mbox{if } n \geq 3, &&
   \mbox{ or } & P \mbox{ is bounded }& \mbox{if } n=2,
 \end{aligned}
   \right.
   \end{equation}
   as $t \rightarrow \infty$ uniformly on compact subsets of $\overline{K^c}$.
 \end{theorem}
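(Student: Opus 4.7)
The plan is to use the half-relaxed limit technique of Barles--Perthame combined with the free boundary location estimates from Lemma~\ref{viscos free boundary bound} and uniqueness for the exterior Dirichlet problem.

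First I would observe that by the lower bound $\min_{\Gamma_t(v)}|x| > C_1 t^{1/n}$ (resp.\ $C_1 \mathcal{R}(t)$ for $n=2$) in Lemma~\ref{viscos free boundary bound}, any compact $E \subset \overline{K^c}$ satisfies $E \subset \Omega_t(v) \cup K$ for all sufficiently large $t$, so $v$ solves the heat equation classically in a neighborhood of $E \setminus K$ with Dirichlet data $v=1$ on $\partial K$. Combined with the uniform bound $v \leq C|x|^{2-n}$, interior and boundary parabolic regularity then make the family $\{v(\cdot, t)\}_{t\geq 1}$ uniformly bounded and equicontinuous on $E$.

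Next I would introduce the half-relaxed limits
\[
\overline{V}(x) := \limsup_{\substack{y \to x \\ t \to \infty}} v(y,t), \qquad \underline{V}(x) := \liminf_{\substack{y \to x \\ t \to \infty}} v(y,t),
\]
on $\overline{\mathbb{R}^n \setminus K}$, so that $\underline{V} \leq \overline{V}$. The key step is to show that $\overline{V}$ is a viscosity subsolution and $\underline{V}$ a viscosity supersolution of $-\Delta V = 0$ in $\mathbb{R}^n \setminus K$: given a smooth time-independent test function $\phi$ at which $\overline{V} - \phi$ has a strict local maximum at some $x_0 \in \mathbb{R}^n \setminus K$, the definition of $\overline{V}$ yields a sequence of local maxima $(x_k, t_k)$ of $v(x,t) - \phi(x)$ in space-time with $x_k \to x_0$ and $t_k \to \infty$; combining the first-order condition $v_t(x_k, t_k) = 0$ with the heat equation gives $\Delta v(x_k, t_k) = 0$, so that $\Delta(v - \phi) \leq 0$ at these maxima yields $\Delta \phi(x_0) \geq 0$ in the limit. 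The supersolution property of $\underline{V}$ is analogous.

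For the boundary condition, $v = 1$ on $K$ together with parabolic boundary regularity gives $\overline{V} = \underline{V} = 1$ on $\partial K$. For $n \geq 3$ the decay $v(x,t) \leq C|x|^{2-n}$ forces $\overline{V}(x) \to 0$ as $|x| \to \infty$, matching $P$; for $n = 2$ comparison with the radial Stefan supersolution (whose stationary limit is a bounded radial harmonic profile) pins down $\overline{V}$ at infinity. Uniqueness for the exterior Dirichlet problem~\eqref{near filed limit} then yields $\overline{V} \leq P \leq \underline{V}$, and combined with the trivial inequality $\underline{V} \leq \overline{V}$ we obtain $\overline{V} = \underline{V} = P$; equicontinuity upgrades this to locally uniform convergence on compact subsets of $\overline{K^c}$. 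The main obstacle is the $n=2$ case, where the free boundary only grows like $(t/\log t)^{1/2}$ and bounded harmonic functions on exterior planar domains are not uniquely determined by their boundary values on $\partial K$, so the radial supersolution barrier is essential to control the behavior at infinity before one can invoke a comparison principle.
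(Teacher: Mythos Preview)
The paper does not give its own argument; it simply defers to Theorem~8.1 of \cite{QV}, whose proof is a direct barrier construction (compare $v$ from above with the caloric function on all of $\mathbb{R}^n\setminus K$ with the same boundary data, and from below with caloric functions on $B_R\setminus K$ with zero data on $\partial B_R$, then let $R\to\infty$). Your half-relaxed limit route is a legitimate alternative, but the step where you pass from the parabolic equation to the elliptic one has a gap.

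You assert that the Barles--Perthame construction yields \emph{space-time} local maxima $(x_k,t_k)$ of $v-\phi$, so that the first-order condition gives $v_t(x_k,t_k)=0$ and hence $\Delta v(x_k,t_k)=0$. The standard argument only produces \emph{spatial} maxima of $v(\cdot,t_k)-\phi$ along some sequence $t_k\to\infty$; nothing forces $t_k$ to be a critical time. Since the equation $v_t-\Delta v=0$ is genuinely parabolic, you cannot conclude subharmonicity of $\overline V$ without controlling the $v_t$ term. The usual remedy is to test with $\phi(x)+\varepsilon t$: boundedness of $v$ then forces an interior-in-time maximum at which $v_t=\varepsilon$, and sending $\varepsilon\downarrow 0$ gives $\Delta\phi(x_0)\geq 0$. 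As written, this step does not go through. A smaller point: your last sentence is incorrect---bounded harmonic functions on the exterior of a compact connected planar set \emph{are} uniquely determined by their boundary values (via the removable singularity at infinity), and in fact $P\equiv 1$ when $n=2$; so the $n=2$ case is not the obstacle you suggest, and your radial barriers already settle it.
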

 \begin{proof}
   See proof of Theorem 8.1 in \cite{QV}.
 \end{proof}
 \begin{lemma}[cf. {\cite[Lemma 4.5]{QV}}]
   \label{C*}
   There exists a constant $C_*=C_*(K,n)$ such that the solution $P$ of problem (\ref{near filed limit}) satisfies
   $\lim\limits_{|x| \rightarrow \infty}|x|^{n-2}P(x)=C_*.$
 \end{lemma}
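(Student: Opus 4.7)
The plan is to reduce the exterior Dirichlet problem \eqref{near filed limit} to an interior problem via the Kelvin transform, and then apply the removable singularity theorem for bounded harmonic functions.

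First, I would establish the pointwise decay $0 \le P(x) \le C|x|^{2-n}$ when $n \ge 3$, and $0 \le P(x) \le C$ when $n = 2$. Choose $R_0$ large enough that $K \subset B_{R_0}(0)$, and set $c_0 := \max_{\partial B_{R_0}(0)} P$, which is finite by interior regularity of harmonic functions. The radial function $c_0 (R_0/|x|)^{n-2}$ (resp.\ the constant $c_0$ if $n=2$) is harmonic on $\Rn \setminus \overline{B_{R_0}(0)}$, agrees with an upper bound for $P$ on $\partial B_{R_0}(0)$, and matches the behavior of $P$ at infinity prescribed in \eqref{near filed limit}. Applying the maximum principle on annuli $B_R(0) \setminus \overline{B_{R_0}(0)}$ and letting $R \to \infty$ gives the required bound.

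Second, I would apply the Kelvin transform $y \mapsto x = y/|y|^2$ and define
\[
\tilde P(y) := |y|^{2-n} P\!\left(y/|y|^2\right),
\]
which is harmonic on the image of $\Rn \setminus \overline{B_{R_0}(0)}$, namely $B_{1/R_0}(0) \setminus \{0\}$. Under this substitution $|y|^{2-n} = |x|^{n-2}$, so the decay bound of the first step translates precisely to $|\tilde P(y)| \le c_0$ near $y = 0$. For $n = 2$ the Kelvin factor is trivial and the boundedness of $\tilde P$ follows directly from the boundedness of $P$.

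Finally, the classical removable singularity theorem for bounded harmonic functions implies that $\tilde P$ extends to a harmonic function on all of $B_{1/R_0}(0)$. Setting
\[
C_* := \tilde P(0) = \lim_{y \to 0} \tilde P(y) = \lim_{|x| \to \infty} |x|^{n-2} P(x)
\]
gives the conclusion, with $C_*$ depending only on $P$, and hence by uniqueness of the solution of \eqref{near filed limit} only on $K$ and $n$. There is no serious obstacle here; the argument is classical, the only mild subtlety being the two-dimensional case, where the Kelvin factor degenerates to $1$ and the boundedness hypothesis of \eqref{near filed limit} plays the role that the algebraic decay estimate plays for $n \ge 3$.
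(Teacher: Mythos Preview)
Your argument via the Kelvin transform and the removable singularity theorem is correct and is the classical route to this result; the paper itself does not supply a proof but simply cites \cite[Lemma~4.5]{QV}. One cosmetic point: the bound you obtain in the first step is $\tilde P(y) \le c_0 R_0^{\,n-2}$ rather than $c_0$, but this is of course immaterial for the removable singularity step.
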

 \section{Uniform convergence of variational solutions}
 \label{sec:var-conv}
 \subsection{Limit problem and the averaging properties of media}
We first recall the limit variational problem as introduced in \cite{P1} (see \cite[section 5]{P1} for
derivation and properties). Let $U_{A,L}(x,t) := \int_0^t V_{A, L}(x, s) \;ds$. For given
$A,L>0$, \cite[Theorem~5.1]{P1} yields that $U_{A,L}(x,t)$ is the unique solution of the limit obstacle problem
 \begin{equation}
 \label{limit problem}
 \left\{\begin{aligned}
 w &\in \mathcal{K}_t,\\
 a(w,\phi) &\geq \langle-L,\phi\rangle, && \text{for all } \phi \in V,\\
 a(w,\psi w)&= \langle L, \psi w\rangle && \text{for all } \psi \in W,
 \end{aligned}\right.
 \end{equation}
 where
 $\mathcal{K}_t= \Big\{ \varphi \in \bigcap_{\varepsilon>0}H^1(\mathbb{R}^n \backslash
 B_\varepsilon) \cap C(\mathbb{R}^n \backslash B_\varepsilon): \varphi \geq 0, \lim\limits_{|x|
 \rightarrow 0}\frac{\varphi(x)}{U_{A,L}(x,t)}=1\Big \},$
 \begin{align}
 \label{set V}
 V&=\left\{ \phi \in H^1(\mathbb{R}^n):\phi \geq 0, \phi = 0 \mbox{ on } B_\varepsilon \mbox{ for some } \varepsilon >0\right\},\\
 \label{set W}
 W&=V \cap C^1(\mathbb{R}^n),
 \end{align}
 and
$$ a_{\Omega} (u,v):=\int_{\Omega}{Du \cdot Dv dx}, \quad \left<u,v\right>_\Omega :=
\int_{\Omega}uvdx.  $$
We omit the set $\Omega$  in the notation if $\Omega = \mathbb{R}^n$.

 We also recall the following application of the subadditive ergodic theorem.
 \begin{lemma}[cf. {\cite[Section 4, Lemma 7]{K2}, see also \cite{P1}}]
   \label{media}
   For given $g$ satisfying (\ref{condition in g 2}), there exists a constant, denoted by
  $\left<1/g\right>$, such that if $\Omega \subset \mathbb{R}^n$ is a bounded measurable set and if
  $\{u^\varepsilon\}_{\varepsilon>0} \subset L^2(\Omega)$ is a family of functions such that $u^\varepsilon \rightarrow u$ strongly in $L^2(\Omega)$ as $\varepsilon \rightarrow 0$, then
   \begin{equation*}
   \lim\limits_{\varepsilon\rightarrow 0} \int\limits_{\Omega}\frac{1}{g(x/\varepsilon,\omega)}u^\varepsilon(x)dx
   =\int\limits_{\Omega}\left<\frac{1}{g}\right>u(x)dx   \mbox{ a.e. $\omega \in A$}.
\end{equation*}
 \end{lemma}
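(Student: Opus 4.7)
The plan is to first reduce the claim to the weak-$*$ convergence of $1/g(\cdot/\varepsilon,\omega)$ to the constant $\left<1/g\right>$ tested against fixed $L^1$ functions, and then verify this weak-$*$ convergence in each of the two cases in condition (2). For the reduction I would write the decomposition
\[
\int_\Omega \frac{u^\varepsilon(x)}{g(x/\varepsilon,\omega)}\,dx - \int_\Omega \left\langle \frac{1}{g}\right\rangle u(x)\,dx
= \int_\Omega \frac{u^\varepsilon - u}{g(x/\varepsilon,\omega)}\,dx + \int_\Omega \left(\frac{1}{g(x/\varepsilon,\omega)} - \left\langle\frac{1}{g}\right\rangle\right) u(x)\,dx.
\]
The first integral is bounded in absolute value by $\tfrac{1}{m}|\Omega|^{1/2}\|u^\varepsilon-u\|_{L^2(\Omega)}$, which vanishes by hypothesis since $1/g\leq 1/m$ and $\Omega$ is bounded. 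Hence it suffices to show that the second integral tends to $0$ for every fixed $u \in L^2(\Omega)\subset L^1(\Omega)$.

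In the periodic case, $\left<1/g\right>$ is taken to be the mean of $1/g$ on the unit cell $[0,1]^n$. The desired convergence is classical: test first against the indicator of an axis-aligned cube $Q\subset\Omega$, where the change of variables $y = x/\varepsilon$ reduces matters to a Riemann-sum computation for the periodic function $1/g$; then extend to simple functions by linearity and to all of $L^1(\Omega)$ by density, using the uniform bound $\|1/g(\cdot/\varepsilon,\omega)\|_{L^\infty}\leq 1/m$.

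In the stationary ergodic case, $\left<1/g\right>$ is the expectation of $1/g(0,\cdot)$. By the multidimensional Birkhoff ergodic theorem applied to the stationary bounded process $1/g$, there is a set $A_0\subset A$ of full probability such that for every $\omega\in A_0$ and every dyadic cube $Q\subset\mathbb{R}^n$ with rational vertices,
\[
\frac{1}{|RQ|}\int_{RQ}\frac{dy}{g(y,\omega)}\longrightarrow \left\langle\frac{1}{g}\right\rangle \qquad \text{as }R\to\infty.
\]
Changing variables gives the weak-$*$ convergence of $1/g(\cdot/\varepsilon,\omega)$ against the indicator of any such cube intersected with $\Omega$, and the $L^\infty$ bound on $1/g$ lets one extend to general $u\in L^1(\Omega)$ by density.

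The one genuine subtlety, and the step I would be most careful about, is precisely this extraction of a single full-measure set $A_0\subset A$ that works simultaneously for a countable dense family of test functions; once the convergence is secured on such a family, the uniform bound on $1/g$ closes up the three-$\varepsilon$ approximation routinely. Since the statement is attributed to \cite{K2,P1}, after setting up the decomposition above I would appeal to those references for the detailed ergodic-theoretic step rather than reproduce it.
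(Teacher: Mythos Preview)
The paper does not supply a proof for this lemma; it simply states the result with attribution to \cite[Section~4, Lemma~7]{K2} and \cite{P1}. Your sketch is a correct outline of the standard argument and in fact matches what those references do: reduce to weak-$*$ convergence of $1/g(\cdot/\varepsilon,\omega)$ against fixed test functions via the decomposition you wrote, then verify that convergence by periodicity or by the ergodic theorem on a countable family of cubes, closing up by density and the uniform bound $1/M\leq 1/g\leq 1/m$. Your closing remark---that after the decomposition one should appeal to \cite{K2,P1} for the ergodic step---is exactly what the paper does from the outset.
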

 \subsection{Uniform convergence of rescaled variational solutions}
 Now we are ready to prove the first main result, similar to Theorem 6.2 in \cite{P1}.
 \begin{theorem}
   \label{convergence of variational solutions}
   Let $u$ be the unique solution of variational problem (\ref{Variational problem}) and
   $u^\lambda$ be its rescaling. Let $U_{A,L}$ be the unique solution of limit problem (\ref{limit problem}) where $A=C_*$ as in Lemma \ref{C*}, and $L=\left<1/g\right>$ as in Lemma \ref{media}. Then the functions $u^\lambda$ converges locally uniformly to $U_{A,L}$ as $\lambda \rightarrow \infty$ on $\left(\mathbb{R}^n \backslash \{0\}\right) \times \left[0,\infty\right)$.
 \end{theorem}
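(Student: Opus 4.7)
The plan is to follow the strategy of \cite[Theorem~6.2]{P1}, adapting it to the Stefan setting where the rescaled problem \eqref{rescaled Variational problem} still carries a vanishing parabolic term $\lambda^{(2-n)/n}u^\lambda_t$ (or $(\log \mathcal{R}(\lambda))^{-1}u^\lambda_t$ when $n=2$). The main idea is to first trap $u^\lambda$ between radial barriers, extract a locally uniform limit along subsequences, identify this limit as a solution of the limit obstacle problem \eqref{limit problem}, and then invoke uniqueness of \eqref{limit problem}.

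First I would build radial barriers. Using the radial Stefan solutions with constants $L=1/m$ and $L=1/M$ and with inner balls chosen so that $B(0,a)\subset K\subset B(0,a')$, comparison gives $\underline u \le u \le \overline u$; after rescaling, Lemma~\ref{convergence radial lemma} together with the definition of $U_{A,L}$ shows that the rescaled barriers $\underline u^\lambda,\overline u^\lambda$ converge locally uniformly on $(\Rn\setminus\{0\})\times[0,\infty)$ to the radial Hele-Shaw profiles $U_{C_*,1/m}$ and $U_{C_*,1/M}$ respectively. In particular the family $\{u^\lambda\}$ is locally uniformly bounded on $(\Rn\setminus\{0\})\times[0,\infty)$, locally uniformly bounded away from zero near the origin has the correct singular behaviour $C_*|x|^{2-n}$ (respectively $-C_*\log|x|$ in $n=2$) squeezed between the two barriers, and the spatial supports $\Omega_t(u^\lambda)$ are contained in a fixed ball $B_{R(T)}$ for $t\le T$.

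Next I would extract a locally uniform limit. Standard obstacle-problem regularity (Lipschitz in $t$, uniform $W^{2,p}$ in $x$ on compact subsets away from the origin, with constants that do not blow up under the rescaling because the coefficient $\lambda^{(2-n)/n}$ in front of the time derivative stays bounded) together with Arzelà–Ascoli yields a subsequence $u^{\lambda_k}\to u^*$ locally uniformly on $(\Rn\setminus\{0\})\times[0,\infty)$, with uniform convergence of the gradients weakly in $L^2_{\mathrm{loc}}$. The singular behaviour of the barriers at $0$ forces $u^*(\cdot,t)\in\mathcal K_t$, i.e.\ the limit displays the correct singularity with constant $A=C_*$ coming from Lemma~\ref{C*}.

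The central step is to pass to the limit in the rescaled variational inequality. Plugging in a test function $\varphi\in V$ (or of the form $\psi u^{\lambda_k}$ with $\psi\in W$) and integrating, the parabolic term gives $\lambda_k^{(2-n)/n}\langle u^\lambda_t,\varphi-u^\lambda\rangle$, which tends to $0$ since $u^\lambda_t$ is bounded and $\lambda_k^{(2-n)/n}\to 0$ (and similarly for $n=2$). The right-hand side contains $f(\lambda_k^{1/n}x)$, which on the complement of a small neighbourhood of $\supp v_0$ (shrinking to $\{0\}$ under rescaling) equals $-1/g(\lambda_k^{1/n}x)$; here I would apply Lemma~\ref{media} with $\varepsilon=\lambda_k^{-1/n}$ to $u^{\varepsilon}:=\varphi-u^{\lambda_k}\to\varphi-u^*$ strongly in $L^2$, obtaining in the limit $\langle-\langle 1/g\rangle,\varphi-u^*\rangle$. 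The Laplacian term converges by weak $L^2$ convergence of the gradients (for the second, quadratic identity one argues as in \cite{P1} using $\psi u^{\lambda_k}$ as a test function and the Rellich compactness). Thus $u^*$ satisfies \eqref{limit problem} with $A=C_*$, $L=\langle 1/g\rangle$, and uniqueness forces $u^*=U_{C_*,\langle 1/g\rangle}$; since the limit does not depend on the subsequence, the full family $u^\lambda$ converges.

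The main obstacle I anticipate is the passage to the limit in the averaged latent-heat term combined with controlling what happens near the origin: the test functions in $V,W$ vanish near $0$ by definition, so away from the origin the rescaled $f(\lambda^{1/n}x)$ is simply $-1/g(\lambda^{1/n}x)$ and Lemma~\ref{media} applies, but one must verify that the singular boundary condition $\lim_{|x|\to 0}u^*(x,t)/U_{A,L}(x,t)=1$ is preserved in the limit. This is precisely where the near-field limit Theorem~\ref{Near field limit Theorem} combined with Lemma~\ref{C*} identifies the correct constant $A=C_*$ via the radial sandwich, and where the rescaling has been calibrated so that the singularity survives.
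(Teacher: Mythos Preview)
Your proposal is correct and follows essentially the same route as the paper: bound $u^\lambda_t=v^\lambda$ via a radial Stefan barrier, use elliptic obstacle-problem regularity to get uniform $W^{2,p}$/$C^{0,\alpha}$ estimates away from the origin, extract a limit by Arzel\`a--Ascoli, pass to the limit in the variational inequality using Lemma~\ref{media} and the vanishing of the $\lambda^{(2-n)/n}u^\lambda_t$ term, and identify the singularity at $0$ via the near-field limit.

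One inaccuracy to fix: the radial barriers built on balls $B_a\subset K\subset B_{a'}$ do \emph{not} rescale to $U_{C_*,1/m}$ and $U_{C_*,1/M}$; they rescale to $U_{a^{n-2},1/m}$ and $U_{(a')^{n-2},1/M}$ (since the boundary value $1$ at $|x|=a$ forces $A=a^{n-2}$ in \eqref{boundary con1}). So the sandwich only gives boundedness and a range $a^{n-2}\le A\le (a')^{n-2}$ for the singularity, not the exact constant $C_*$. The identification $A=C_*$ comes solely from Theorem~\ref{Near field limit Theorem} and Lemma~\ref{C*} applied to the actual solution $v$, exactly as the paper does in its Lemma~\ref{singularity U}; you should remove the claim that the barriers themselves pin down $C_*$.
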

 \begin{proof}
   We argue as in \cite{P1}. Fix $T > 0$. By Lemma~\ref{viscos free boundary bound}, we can bound
   $\Omega_t(u^\lambda)$ by $\Omega := B_{\delta}(0)$ for some $\delta >0$, for all $0 \leq t \leq
   T$ and $\lambda >0$. For some $\varepsilon >0$, define $\Omega_\varepsilon:= \Omega \backslash
   \overline{B(0,\varepsilon)}$, $Q_{\varepsilon}:= \Omega_\varepsilon \times [0,T]$ . We will prove the convergence in $Q_\varepsilon$.

   Let $v$ be the viscosity solution of the Stefan problem (\ref{Stefan}). We can find constants
   $0 < a < b$ such that $K \subset B_a(0)$ and $\overline{\Omega_0} \subset B_b(0)$. Set $L=1/M$ and $A
   = \max v_0$. Choose radially symmetric smooth $\theta_0 \geq 0$ such that $\theta_0 \geq v_0$ on
   $\Omega_0 \setminus B_a(0)$ and $\theta_0 = 0$ on $\Rn \setminus B_b(0)$. The radial solution
   $\theta$ of the
   Stefan problem on $\Rn \setminus \overline{B_a(0)}$ with such parameters will be above $v$ by the comparison
   principle.
   Thus, for $\lambda$ large enough, the rescaled solutions satisfy
   $$0 \leq  v^\lambda \leq  \theta^\lambda \mbox{ in } Q_{\varepsilon / 2}.$$
   On the other hand, by Lemma \ref{convergence radial lemma},
   $\theta^\lambda$ converges to $V_{A,L}$ as $\lambda \rightarrow
   \infty$ uniformly on $Q_{\varepsilon/2}$ and  $V_{A,L}$ is bounded in $Q_{\varepsilon/2}$ and therefore for $\lambda$ large enough so
   that $(B_a(0))^\lambda \subset B_{\varepsilon/2}(0)$,
   \begin{equation}
   \label{u^lamda_t bounded}
   \|u^\lambda_t\|_{L^\infty(Q_{\varepsilon/2})}= \|v^\lambda\|_{L^\infty(Q_{\varepsilon/2})}\leq C(\varepsilon).
   \end{equation}
   Since $u^\lambda$ satisfies (\ref{rescaled Variational problem}), we have
   $$\Delta u^\lambda (\varphi -u^\lambda) \leq
   \left(\lambda^{(2-n)/n}u^\lambda_t-f(\lambda^{1/n}x)\right)(\varphi - u^\lambda ) \mbox{ a.e for
   any } \varphi \in \mathcal{K}^\lambda(t).$$
   As $u^\lambda_t$ is bounded, $u^\lambda$ satisfies the elliptic obstacle problem
   $$\Delta u^\lambda (\varphi -u^\lambda) \leq \left(C\lambda^{(2-n)/n}-f(\lambda^{1/n}x)\right)(\varphi - u^\lambda )$$
    a.e for any $\varphi \in \mathcal{K}^\lambda(t)$ such that $\varphi - u^\lambda \geq 0$.

   Now we can use the standard regularity estimates for the obstacle problem (see
    \cite [Proposition 2.2, chapter 5]{R1} for instance),
   $$\displaystyle \|\Delta u^\lambda (\cdot,t)\|_{L^p(\Omega_{\varepsilon / 2})} \leq \left
   \|C\lambda^{(2-n)/n}-\frac{1}{g^\lambda} \right\|_{L^p(\Omega_{\varepsilon /2})} \leq C_0 \mbox{
   for all } 1 \leq p \leq \infty,$$
   for all $\lambda$ large so that also $\Omega_0^\lambda \subset B_{\varepsilon/2}(0)$.
   Using \eqref{u^lamda_t bounded} and $u^\lambda(x,t)= \int_{0}^{t}v^\lambda(x,s)ds$, we conclude
    $\|u^\lambda(\cdot,t)\|_{L^p(\Omega_{\varepsilon /2})}$ is bounded uniformly in $t \in [0,T]$
    and $\lambda$ large.

   Using elliptic interior estimate results for obstacle problem again (for example,
   \cite[Theorem 2.5]{R1}), we can find constants $0<\alpha < 1$ and $C_2$, independent of $t \in [0,T]$ and
   $\lambda \gg 1$, such that
   \begin{equation*}
     \begin{aligned}
       \|u^\lambda(\cdot,t)\|_{W^{2,p}(\Omega_\varepsilon)} & \leq C_2,\\
       \|u^\lambda(\cdot,t)\|_{C^{0,\alpha}(\Omega_\varepsilon)}& \leq C_2,
 \end{aligned}
   \quad\mbox{for all } 0 \leq t \leq T, \lambda \gg 1.
   \end{equation*}
   Moreover, using (\ref{u^lamda_t bounded}) again, we have $|u^\lambda(x,t)-u^\lambda(x,s)| \leq
   C_3|t-s|$. Thus $u^\lambda$ is H\"older continuous in $x$ with $0<\alpha<1$ and Lipschitz
   continuous in $t$. In particular, $u^\lambda$ satisfies
   $$\|u^\lambda\|_{C^{0,\alpha}(Q_\varepsilon)} \leq C_4(C_2,C_3) \mbox{ for all } \lambda \geq \lambda_0.$$

   The argument for case $n=2$ is similar.

   By the Arzel\`{a}-Ascoli theorem, we can find a function $\bar u \in C((\Rn \setminus \{0\})
   \times [0, \infty))$ and a subsequence $\{u^{\lambda_k}\} \subset
  \{u^\lambda\}$ such that
   $$u^{\lambda_k} \rightarrow \bar{u} \mbox{ locally uniformly on $(\Rn \setminus \{0\})
   \times [0, \infty)$} \mbox{ as } k
   \rightarrow \infty, $$
   Due to the compact embedding of $H^2$ in $H^1$, we have,
   $u^{\lambda_k}(\cdot,t) \rightarrow \bar{u}(\cdot,t)$ strongly in $H^1(\Omega_\varepsilon)$ for all $t
   \geq 0$, $\varepsilon > 0$.

   To finish the proof, we need to show that the function $\bar{u}$ is the solution of limit
   problem (\ref{limit problem}) and then by the uniqueness of the limit problem, we deduce that the convergence is not restricted to a subsequence.
   \begin{lemma}[cf. {\cite[Lemma 6.3]{P1}}]
     \label{U satisfies obstacle inequality}
     For each $t \geq 0$, $\bar{w}:=\bar{u}(\cdot,t)$ satisfies
     \begin{align}
     \label{1}
     a(\bar{w}, \phi) \geq \left<-L,\phi\right> & \mbox{ for all } \phi \in V,\\
     \label{2}
     a(\bar{w}, \psi \bar{w})=\left<-L, \psi \bar{w}\right>& \mbox{ for all } \psi \in W,
     \end{align}
     where $L= \left<1/g \right>$ as in Lemma \ref{media} and $V, W$ as in (\ref{set V}) and (\ref{set W}).
   \end{lemma}
   \begin{proof}Consider $n \geq 3$.
     Follow techniques in \cite{P1}, fix $t \in [0,T]$ and $\phi \in V$, denote $w^k:=
     u^{\lambda_k}(\cdot,t)$. Take $\phi \in V$ first. There exists $k_0>0$ such that for all $k
     \geq k_0$, $\Omega_0^{\lambda_k} \subset B_\varepsilon(0)$ and $\phi =0$ on
     $B_\varepsilon(0)$. Set $\varphi^k=\phi + w^k \in \mathcal{K}_t^{\lambda_k}$. Substitute the
     function $\varphi^k$ into the rescaled equation (\ref{rescaled Variational problem}),
     integrate both sides and integrate by parts, which yields
       $$a(w^{\lambda_k},\phi) \geq -\lambda^{(2-n)/n}_k \left<u^{\lambda_k}_t, \phi\right>+ \left<-\frac{1}{g^{\lambda_k}}, \phi\right>.$$
        Recalling Lemma \ref{media} and that $u^{\lambda_k}_t$ is bounded, $w \mapsto a(w,\phi)$ is
        bounded linear functional in $H^1$. Since $w^k \rightarrow \bar{w}$ strongly in $H^1$ as $k
        \rightarrow \infty$, we can send $\lambda_k \to \infty$ and obtain (\ref{1}).

     Now take $\psi \in W$ such that $0 \leq \psi \leq 1, \psi =0 $ on $B_\varepsilon(0)$ and take
     $k_0$ such that $\Omega^{\lambda_k}_0 \subset B_\varepsilon(0)$ for all $k \geq k_0$. Since
     $\psi \in W$ then $\psi \bar{w} \in V$. As above we have $a(\bar{w},\psi \bar{w}) \geq
     \left<-L, \psi \bar{w}\right>$. Moreover, consider $\varphi^k=(1-\psi)w^k \in \varphi^k \in
     \mathcal{K}^{\lambda_k}(t)$, $k \geq k_0$. Then,
     \begin{align*}
     a(w^k, \psi w^k)= -a(w^k, \varphi^k-w^k)&\leq\left<-\frac{1}{g^{\lambda_k}}, \psi w^k\right>+\lambda_k^{(n-2)/n}\left<w^k, \psi w^k\right>.
     \end{align*}
     Again using Lemma \ref{media}, boundedness in $L^\infty(\mathbb{R}^n)$ of $w^k$, the lower
     semi-continuity in $H^1$ of the map $w \mapsto a(w, \psi w)$, and the fact that $w^k \rightarrow \bar{w}$ strongly in $H^1$ as $k \rightarrow \infty$
     we can conclude the equality (\ref{2}).

     Again, $n = 2$ is similar.
   \end{proof}
   Finally, the next lemma establishes that the singularity of $\bar u$ as $|x| \rightarrow 0$ is correct.
   \begin{lemma}[cf. {\cite[Lemma 6.4]{P1}}]
     \label{singularity U}
     We have
     $$\lim\limits_{|x| \rightarrow 0} \frac{\bar{u}(x,t)}{U_{C_*, L}}(x,t)=1$$
     for every $t \geq 0$, where $C_*$ as in Lemma \ref{C*}.
   \end{lemma}

   \begin{proof}
     The proof follows the proof of \cite[Lemma 6.4]{P1} since the solutions of the Stefan problem have the same near field limit (Lemma \ref{near filed limit}) as the Hele-Shaw solutions.
   \end{proof}
   This finishes the proof of Theorem \ref{convergence of variational solutions} .
 \end{proof}

 \section{Uniform convergence of rescaled viscosity solutions and free boundaries}
 \label{sec:visc-conv}

 In this section, we will deal with the convergence of $v^\lambda$ and their free boundaries. Let $v$ be a viscosity solution of the Stefan problem (\ref{Stefan}) and $v^\lambda$ be its rescaling.  Let $V=V_{C_*,L}$ be the solution of Hele-Shaw problem with a point source as in (\ref{Hele-Shaw with point sorce}), where $C_*$ is the constant of Lemma \ref{C*} and $L=\left<1/g\right>$ as in Lemma \ref{media}.

 We define the half-relaxed limits  in $\{|x| \neq 0, t\geq 0\}$:
 \begin{align*}
 & v^*(x,t)= \limsup_{(y,s),\lambda \rightarrow (x,t), \infty} v^\lambda(y,s), & v_*(x,t)= \liminf_{(y,s),\lambda \rightarrow (x,t), \infty} v^\lambda(y,s),
 \end{align*}
 \begin{remark}
 $V$ is continuous in $\{|x| \neq 0, t\geq 0\}$, therefore $V_*=V=V^*$.
 \end{remark}
 To complete Theorem~\ref{th:main-convergence}, we prove a result similar to \cite[Theorem 7.1.]{P1}
 \begin{theorem}
   \label{convergence of rescaled viscosity solution}
   The rescaled viscosity solution $v^\lambda$ of the Stefan problem (\ref{Stefan}) converges
   locally uniformly to $V = V_{C_*, \ang{1/g}}$ in $(\mathbb{R}^n \backslash \{0\}) \times [0,\infty)$ as $\lambda \rightarrow \infty$ and
   $$v_*=v^*=V.$$
   Moreover, the rescaled free boundary $\{\Gamma(v^\lambda)\}_{\lambda}$ converges to $\Gamma(V)$ locally uniformly with respect to the Hausdorff distance.
 \end{theorem}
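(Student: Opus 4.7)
The plan is to combine the variational convergence of Theorem~\ref{convergence of variational solutions} with interior regularity and the near-field limit of Theorem~\ref{Near field limit Theorem}, following the outline of \cite[Theorem~7.1]{P1}. Define the half-relaxed limits $v^*$ and $v_*$ on $(\Rn \setminus \{0\}) \times [0,\infty)$. Since $V$ is continuous and $v_* \leq v^*$ by construction, it suffices to prove $v^* \leq V \leq v_*$; this then forces equality with $V$ and upgrades the half-relaxed convergence to locally uniform convergence in the standard way.

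The first step is to establish Hausdorff convergence of the positive sets $\Omega(v^\lambda) \to \Omega(V)$. Exploiting the monotonicity $\Omega_s(v^\lambda) \subseteq \Omega_t(v^\lambda)$ for $s \leq t$ (available for weak solutions with initial data \eqref{initial data}) and the identity $u^\lambda(x,t) = \int_0^t v^\lambda(x,s)\,ds$, one obtains $\Omega_t(v^\lambda) = \Omega_t(u^\lambda)$ for each $t$. The inclusion $\Omega(V) \subseteq \liminf_\lambda \Omega(v^\lambda)$ is then immediate from the locally uniform convergence $u^\lambda \to U$ on compact subsets of $\Omega(V) = \Omega(U)$, where $U > 0$. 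The reverse inclusion $\limsup_\lambda \Omega(v^\lambda) \subseteq \overline{\Omega(V)}$ requires a quadratic non-degeneracy estimate for the obstacle-problem solution $u^\lambda$ near its free boundary, uniform in $\lambda$; this should follow from the uniform $W^{2,p}$ bounds established in the proof of Theorem~\ref{convergence of variational solutions} together with the uniform lower bound $-f(\lambda^{1/n}x) \geq 1/M$ on the source of the rescaled obstacle equation, ensuring $u^\lambda \geq c\,\mathrm{dist}(\cdot,\Gamma(u^\lambda))^2$ locally, so that $\Omega(u^\lambda)$ cannot extend more than $o(1)$ beyond $\overline{\Omega(V)}$ as $\lambda \to \infty$.

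Second, inside $\Omega(V) \setminus \{0\}$ I would extract locally uniform convergence $v^\lambda \to V$. Here $v^\lambda$ satisfies $\lambda^{(2-n)/n} v^\lambda_t - \Delta v^\lambda = 0$ with $\|v^\lambda\|_{L^\infty}$ bounded by Lemma~\ref{viscos free boundary bound}, so standard interior estimates (treating the equation as an elliptic one with bounded right-hand side $\lambda^{(2-n)/n} v^\lambda_t$, which is controlled via time-differentiation of the rescaled obstacle inequality) yield uniform H\"older continuity on compact subsets of $\Omega(V)\setminus\{0\}$. Arzel\`a--Ascoli produces a subsequence converging locally uniformly to some $\bar v$ which is harmonic in $\Omega(V) \setminus \{0\}$. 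The Hausdorff convergence from the first step forces $\bar v = 0$ on $\Gamma(V)$, and Theorem~\ref{Near field limit Theorem} together with Lemma~\ref{C*} pins down $\bar v(x,t) \sim C_* |x|^{2-n}$ (respectively the logarithmic analogue for $n=2$) as $|x| \to 0$ at each fixed $t > 0$. The corresponding elliptic boundary value problem with point source has the unique solution $V(\cdot,t)$, so $\bar v = V$ and the convergence extends to the full family.

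The principal obstacle I anticipate is the uniform-in-$\lambda$ non-degeneracy of $u^\lambda$ together with the matching of $v^\lambda$ to $V$ across the free boundary $\Gamma(V)$. Once the interior convergence is in place and $\Omega(v^\lambda) \to \Omega(V)$ in Hausdorff distance, upper semicontinuity of $v^*$ combined with the continuity of $V$ and the fact that $V = 0$ on $\Gamma(V)$ yields $v^* \leq V$ on $\Gamma(V)$ as well; a symmetric argument using lower semicontinuity of $v_*$ gives $v_* \geq V$, completing $v_* = v^* = V$. The local Hausdorff convergence $\Gamma(v^\lambda) \to \Gamma(V)$ is then a direct consequence of the Hausdorff convergence of the positive sets, since $\Gamma(V)$ is the smooth hypersurface $\{(x,t) : |x| = \rho(t)\}$, which leaves no room for thin or detached components of the limit free boundary.
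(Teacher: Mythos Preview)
Your Step~1 (Hausdorff convergence of positive sets via the non-degeneracy of $u^\lambda$) is essentially what the paper uses in Lemmas~\ref{limit of sequence in 0-level set of ulambda k}, \ref{positive sup rescaling}, and \ref{Inclusion of boundaries}, so that part is fine.

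The genuine gap is in your Step~2. Your Arzel\`a--Ascoli argument requires equicontinuity of $v^\lambda$ in $(x,t)$, but the coefficient $\lambda^{(2-n)/n}$ in front of $v^\lambda_t$ degenerates to zero, and the parabolic H\"older estimates degenerate with it: after the time-rescaling $s = \lambda^{(n-2)/n} t$ needed to apply standard caloric estimates, the H\"older constant in $t$ blows up like $\lambda^{(n-2)\alpha/(2n)}$. Your proposed fix, ``controlling $\lambda^{(2-n)/n} v^\lambda_t$ via time-differentiation of the rescaled obstacle inequality,'' does not work: differentiating $\lambda^{(2-n)/n} u^\lambda_t - \Delta u^\lambda = f(\lambda^{1/n} x)$ in $t$ just reproduces the rescaled heat equation for $v^\lambda$ and gives no bound on $v^\lambda_t$. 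Without uniform time regularity you cannot extract a locally uniformly convergent subsequence on space-time cylinders, and the subsequent boundary matching ``$\bar v = 0$ on $\Gamma(V)$'' also fails, since interior estimates give no control of $v^\lambda$ up to the moving free boundary.

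The paper sidesteps this entirely by working with the half-relaxed limits themselves rather than trying to extract compactness. Lemma~\ref{subharmonic v^* Lemma} gives that $v^*(\cdot,t)$ is subharmonic and $v_*(\cdot,t)$ superharmonic in the viscosity sense, with no regularity needed. The inclusion $\Gamma(v^*) \subset \Gamma(V)$ (Lemma~\ref{Inclusion of boundaries}) together with the singularity Lemma~\ref{boundary condition for limit problem} and elliptic comparison then yield $v^* \leq V_{C_*+\varepsilon,L}$. For the lower bound, the paper uses the weak monotonicity $u \leq Ctv$ of Lemma~\ref{monotonicity 2} (which survives rescaling) to deduce $\Omega(V) \subset \Omega(v_*)$ and hence $V \leq v_*$ by elliptic comparison (Lemma~\ref{Inclusion of positive domain V, v*}). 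Sending $\varepsilon \to 0$ closes the sandwich. The free boundary convergence is then handled separately, and the delicate direction $\Gamma^\infty \subset U_\delta(\Gamma^\lambda)$ uses the parabolic Harnack inequality \emph{after} the time-rescaling, combined with Corollary~\ref{monotoniciy for v}, to rule out a cylinder sitting entirely in $\{v^{\lambda_k} > 0\}$ while meeting $\Gamma(V)$.
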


 To prepare for the proof of Theorem \ref{convergence of rescaled viscosity solution}, we need to
 collect some results which are similar to the ones in \cite{K3} and \cite{P1} with some adaptations to
 our case.  All the results for $n\geq 3$ we have in this section can be obtained for $n=2$ by
 using limit $\frac{1}{\log \mathcal{R}(\lambda)} \rightarrow 0$ as $\lambda \rightarrow \infty$.
 Thus, from here on we only consider case $n \geq 3$, the results for $n=2$ are omitted.
 \subsection{Some necessary technical results}
 \begin{lemma}[cf. {\cite[Lemma 3.9]{K3}}]
   \label{domain of u coincide with domain of v}
   The viscosity solution $v$ of the Stefan problem (\ref{Stefan}) is strictly positive in $\Omega(u)$, satisfies $\Omega(v)= \Omega(u) \mbox{ and } \Gamma(v) = \Gamma(u)$.

 \end{lemma}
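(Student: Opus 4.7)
The plan is to prove the equality of positive sets by showing the two inclusions $\Omega(v)\subset\Omega(u)$ and $\Omega(u)\subset\Omega(v)$ separately; once these are in hand, $\Gamma(v)=\Gamma(u)$ follows automatically since both $\Omega(u)$ and $\Omega(v)$ are open subsets of space-time, so their closures (and therefore their boundaries) coincide.

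For the inclusion $\Omega(v)\subset\Omega(u)$ I would appeal directly to the Perron characterization of $v$ stated in the remark following the coincidence theorem. The zero function lies in the admissible class of subsolutions (it trivially satisfies $w_t-\Delta w\le 0$, $w\le 0$ on $\Gamma(u)$, $w\le 1$ on $K$, and $w(\cdot,0)\le v_0$), so $v\ge 0$ everywhere. On the other hand, every admissible $w$ is constrained by $w\le 0$ on $\Gamma(u)$ and is effectively $0$ outside $\Omega(u)$, so the supremum $v$ vanishes on $\Gamma(u)\cup\Omega(u)^c$, giving $\{v>0\}\subset\Omega(u)$.

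For the reverse inclusion, the plan is to exploit the strong parabolic maximum principle. By construction $v$ is a classical nonnegative solution of $v_t-\Delta v=0$ in $\Omega(u)\setminus K$ with the hot Dirichlet condition $v\equiv 1$ on $K$. The proposition on weak solutions gives $\Omega_0\subset\Omega_t(u)$ for every $t>0$, and $K\subset\Omega_0$ by assumption \eqref{initial data}. Consequently the cylinder $(\Omega_0\setminus K)\times(0,\infty)$ is a parabolically connected subset of the heat domain whose lateral inner boundary is $\partial K$, where $v\equiv 1$. The strong maximum principle then yields $v>0$ throughout $(\Omega_0\setminus K)\times(0,\infty)$. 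To extend positivity to a general point $(x_0,t_0)\in\Omega(u)$, I would join $x_0$ to a point of $\Omega_0$ by a path inside the open set $\Omega_{t_0}(u)$, cover the path by small parabolic cylinders contained in $\Omega(u)$, and propagate positivity along this chain using the parabolic Harnack inequality.

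The main obstacle is justifying that $\Omega_{t_0}(u)$ is spatially connected so that the required path exists. I would argue this from the monotone growth $\Omega_0\subset\Omega_t(u)\subset\Omega_s(u)$ for $t\le s$ together with the continuity and Lipschitz-in-time regularity of $u$: a connected component of $\Omega_{t_0}(u)$ disjoint from $\Omega_0$ would have to nucleate at some intermediate time $t_*\in(0,t_0]$, but then the variational inequality would force $u_t-\Delta u=f$ on the new component, and continuity of $u$ across $t_*$ together with $f(x)=-1/g(x)<0$ where $v_0=0$ would contradict the Lipschitz bound on $u_t$. Once $\Omega(v)=\Omega(u)$ is secured the identity $\Gamma(v)=\partial\Omega(v)=\partial\Omega(u)=\Gamma(u)$ is immediate, completing the proof.
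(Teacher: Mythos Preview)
The paper does not supply its own proof of this lemma; it simply cites \cite[Lemma~3.9]{K3}. So there is nothing in the paper to compare your argument against line by line, and I will instead assess your proposal on its own merits and contrast it with the more direct route that the cited reference (and the structure of the present paper) suggests.

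Your two-inclusion strategy is correct, and the first inclusion $\Omega(v)\subset\Omega(u)$ via the Perron description is fine. For the reverse inclusion, however, you take a detour through the spatial connectedness of $\Omega_{t_0}(u)$ that is both unnecessary and, as written, not quite justified. Your no-nucleation argument invokes ``continuity of $u$ across $t_*$'' and claims a contradiction with ``the Lipschitz bound on $u_t$,'' but neither the nucleation time nor the Lipschitz \emph{upper} bound is what drives the contradiction. The clean version is: on a component $\Omega'\subset\Omega_{t_0}(u)$ disjoint from $\Omega_0$ one has $u_t-\Delta u=f=-1/g<0$ and $u=0$ on $\partial\Omega'$; at an interior spatial maximum of $u(\cdot,t_0)$ in $\Omega'$ we get $\Delta u\le 0$, hence $u_t\le -1/g<0$, contradicting $u_t\ge 0$ from the Proposition on weak solutions. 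This is what you should write instead.

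More to the point, connectedness can be bypassed entirely. Since $u(x,t)=\int_0^t v(x,s)\,ds$, any point $(x_0,t_0)\in\Omega(u)$ admits $s_0\in(0,t_0)$ with $v(x_0,s_0)>0$; then $x_0\in\Omega_{s_0}(v)\subset\Omega_{s_0}(u)\subset\Omega_t(u)$ for all $t\in[s_0,t_0]$ by the monotonicity $\Omega_t(u)\subset\Omega_s(u)$ for $t\le s$ recorded in the paper. Thus the vertical segment $\{x_0\}\times[s_0,t_0]$ lies in the open set $\Omega(u)$, $v$ solves the heat equation in a space--time neighbourhood of it, and the strong maximum principle (applied backward from a hypothetical zero at $(x_0,t_0)$) forces $v(x_0,t_0)>0$. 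This is the argument behind \cite[Lemma~3.9]{K3}, and it is shorter and avoids the connectedness issue altogether.
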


 \begin{lemma}
   \label{subharmonic v^* Lemma}
   Let $v^\lambda$ be a viscosity solution of the rescaled problem \eqref{rescaled equation}.
   Then $v^*(\cdot,t)$ is subharmonic in $\Rn \setminus \{0\}$ and $v_{*}(\cdot,t)$ is superharmonic in $\Omega_t(v_*) \backslash \{0\}$ in viscosity sense.
 \end{lemma}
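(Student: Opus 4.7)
The plan is to invoke the standard stability of viscosity sub/supersolutions under half-relaxed limits, exploiting the fact that the time-derivative coefficient $\lambda^{(2-n)/n}$ in the rescaled equation \eqref{rescaled equation} vanishes as $\lambda \to \infty$ for $n \geq 3$ (and $1/\log \mathcal{R}(\lambda) \to 0$ for $n = 2$), so that the rescaled heat operator formally degenerates to $-\Delta$ in the limit.

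To show $v^{*}(\cdot, t_0)$ is viscosity subharmonic in $\Rn \setminus \{0\}$, fix $t_0 \geq 0$ and a smooth spatial $\varphi$ touching $v^{*}(\cdot, t_0)$ strictly from above at some $x_0 \neq 0$ (the strictness is WLOG after adding $|x - x_0|^4$, which does not change $\Delta\varphi(x_0)$). If $v^{*}(x_0, t_0) = 0$, then since $v^{*} \geq 0$ one has $\varphi(x) \geq v^{*}(x, t_0) \geq 0 = \varphi(x_0)$ near $x_0$, so $x_0$ is a local minimum of $\varphi$ and $-\Delta\varphi(x_0) \leq 0$ automatically. In the nontrivial case $v^{*}(x_0, t_0) > 0$, I lift $\varphi$ to a spacetime test by the time-penalty
$$\Psi_\eta(x, t) := \varphi(x) + \frac{(t - t_0)^{2}}{\eta}, \qquad \eta > 0,$$
on a small closed cylinder $\overline{B_r(x_0)} \times [t_0 - s, t_0 + s]$ (one-sided if $t_0 = 0$). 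A standard penalization argument produces a maximizer $(x_\eta, t_\eta)$ of $v^{*} - \Psi_\eta$ over this cylinder with $(x_\eta, t_\eta) \to (x_0, t_0)$ as $\eta \to 0^{+}$, and by the usual half-relaxed-limit lemma (after a small perturbation of $\Psi_\eta$ if necessary) there exist $\lambda_k \to \infty$ and genuine interior local maxima $(y_k, s_k) \to (x_\eta, t_\eta)$ of $v^{\lambda_k} - \Psi_\eta$ with $v^{\lambda_k}(y_k, s_k) \to v^{*}(x_\eta, t_\eta) > 0$. For $k$ large $(y_k, s_k) \in \Omega(v^{\lambda_k})$, so Definition \ref{def of viscos subsol}(b)(i) applies to $v^{\lambda_k}$ with test $\Psi_\eta$ and gives
$$\lambda_k^{(2-n)/n}\,\frac{2(s_k - t_0)}{\eta} - \Delta \varphi(y_k) \leq 0.$$
Sending first $\lambda_k \to \infty$ (so the first term vanishes) and then $\eta \to 0^{+}$ yields $-\Delta\varphi(x_0) \leq 0$, as required.

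The argument for superharmonicity of $v_{*}(\cdot, t_0)$ in $\Omega_{t_0}(v_{*}) \setminus \{0\}$ is entirely symmetric: the hypothesis $x_0 \in \Omega_{t_0}(v_{*})$ is exactly $v_{*}(x_0, t_0) > 0$, so using $\Psi_\eta(x, t) = \varphi(x) - (t - t_0)^{2}/\eta$ and applying the supersolution counterpart of the above procedure at local minima of $v^{\lambda_k} - \Psi_\eta$ yields $-\Delta\varphi(x_0) \geq 0$ for any smooth $\varphi$ touching $v_{*}(\cdot, t_0)$ from below at $x_0$. The case $n = 2$ follows verbatim after replacing $\lambda^{(2-n)/n}$ by $1/\log \mathcal{R}(\lambda)$, which again tends to $0$.

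The only delicate step is transferring the maximum of $v^{*} - \Psi_\eta$ to genuine local maxima of the sequence $v^{\lambda_k} - \Psi_\eta$ so that the viscosity subsolution inequality for $v^{\lambda_k}$ can legitimately be invoked; this is the classical half-relaxed-limits perturbation, but one must ensure these maxima lie inside the positive set $\Omega(v^{\lambda_k})$. This is exactly why the assumption $v^{*}(x_0, t_0) > 0$ (respectively $v_{*}(x_0, t_0) > 0$) is essential in the nontrivial case — it guarantees that for $k$ large we remain in the interior and only use clause (b)(i) of Definition \ref{def of viscos subsol}, never the free-boundary clause (b)(ii), which would not pass to the limit cleanly as $\lambda \to \infty$.
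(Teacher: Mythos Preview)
Your proof is correct and is precisely the standard viscosity stability argument that the paper invokes without writing out: the paper's entire proof is the single sentence ``This follows from a standard viscosity solution argument using test functions, see for instance \cite{K1}.'' You have simply supplied the details the paper omits, including the time-penalty $\Psi_\eta$ and the observation that the coefficient $\lambda^{(2-n)/n}$ (resp.\ $1/\log\mathcal{R}(\lambda)$) in front of $\partial_t$ kills the time-derivative term in the limit, so there is no methodological difference to compare.
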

 \begin{proof}
   This follows from a standard viscosity solution argument using test functions, see for instance \cite{K1}.
 \end{proof}

 The behavior of functions $v^*,v_*$ at the origin and their boundaries can be established by
 following the arguments in \cite{P1} and \cite{K3}.
 \begin{lemma}[$v^*$ and $v_*$ behave as $V$ at the origin]
   \label{boundary condition for limit problem}
   The functions $ v^*, v_*$ have a singularity at $0$ with:
   \begin{align}
   \label{singularity of V}
   &\lim\limits_{|x| \rightarrow 0+}\frac{v_*(x,t)}{V(x,t)}=1, & \lim\limits_{|x| \rightarrow 0+}\frac{v^*(x,t)}{V(x,t)}=1, \mbox{ for each } t>0.
   \end{align}
 \end{lemma}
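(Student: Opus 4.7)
The plan is to translate the near-field limit (Theorem~\ref{Near field limit Theorem}) together with the asymptotic decay $|y|^{n-2}P(y)\to C_*$ (Lemma~\ref{C*}) from original coordinates into the near-origin behavior of $v^\lambda$ in rescaled coordinates. Heuristically, when $|x|$ is small and $\lambda$ is large, the point $\lambda^{1/n}x$ lies in the far-field regime of $P$; since also $v(\cdot,\lambda s)\to P$, we expect $v^\lambda(x,s)=\lambda^{(n-2)/n}v(\lambda^{1/n}x,\lambda s)\approx \lambda^{(n-2)/n}P(\lambda^{1/n}x)\approx C_*|x|^{2-n}$, matching the singularity of $V(x,t)=C_*(|x|^{2-n}-\rho(t)^{2-n})_+$ at $0$. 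Since $v_*\leq v^*$, it suffices to establish $\limsup_{|x|\to 0^+} v^*(x,t)/V(x,t)\leq 1$ and $\liminf_{|x|\to 0^+} v_*(x,t)/V(x,t)\geq 1$.

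For the upper bound, fix $\varepsilon>0$, use Lemma~\ref{C*} to pick $R_0=R_0(\varepsilon)$ with $P(y)\leq(C_*+\varepsilon)|y|^{2-n}$ for $|y|\geq R_0$, and then use Theorem~\ref{Near field limit Theorem} to find $T_0$ with $v(y,s)\leq(C_*+2\varepsilon)R_0^{2-n}$ on $|y|=R_0$ for $s\geq T_0$. Since $v(\cdot,s)$ is subharmonic on $\Omega_s(v)\setminus K$ (a consequence of the weak monotonicity $v_t\geq 0$ afforded by \eqref{initial data}) and vanishes on $\Gamma_s(v)$, comparison with the harmonic majorant $W(y)=(C_*+2\varepsilon)|y|^{2-n}$ in $\{|y|>R_0\}\cap\Omega_s(v)$ yields $v(y,s)\leq W(y)$ throughout. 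Rescaling gives $|x|^{n-2}v^\lambda(x,s)\leq C_*+2\varepsilon$ whenever $|x|\geq R_0/\lambda^{1/n}$ and $\lambda s\geq T_0$; passing to the half-relaxed $\limsup$, using $|x|^{n-2}V(x,t)\to C_*$ as $|x|\to 0$, and letting $\varepsilon\to 0$ yields the upper bound.

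For the lower bound, $v$ is not superharmonic, so direct comparison from below with a harmonic minorant is unavailable. Instead, choose $R_0,T_0$ large so that the near-field limit gives $v(y,s)\geq (C_*-2\varepsilon)R_0^{2-n}$ on $|y|=R_0$ for $s\geq T_0$, and construct a radial classical Stefan subsolution $\theta^-$ on $\{|y|\geq R_0\}\times[T_0,\infty)$ with constant Dirichlet value $(C_*-2\varepsilon)R_0^{2-n}$ at $|y|=R_0$, uniform latent heat $L^+=1/m$ (so $g\equiv m$), and zero initial condition at $s=T_0$. Because $g\geq m$ everywhere, the viscosity solution $v$ is a supersolution of the Stefan problem with $g\equiv m$, so viscosity comparison yields $\theta^-\leq v$ in $\{|y|\geq R_0\}\times[T_0,\infty)$. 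Rescaling and invoking Lemma~\ref{convergence radial lemma} for $\theta^-$ (the time shift $T_0/\lambda$ vanishes in the limit) produces the radial limit $V_{C_*-2\varepsilon,\,1/m}$, whose singularity at the origin has strength $C_*-2\varepsilon$ independently of the choice of $L$. Hence $v_*(x,t)\geq V_{C_*-2\varepsilon,\,1/m}(x,t)$, and sending $|x|\to 0$ then $\varepsilon\to 0$ concludes the lower bound.

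The main obstacle is the lower bound: the asymmetry between $v$'s subharmonicity and the need for a pointwise lower bound forces us to abandon direct use of the maximum principle and instead stitch together pointwise near-field information on a finite sphere $|y|=R_0$ with a radial parabolic subsolution whose rescaled limit provides the correct strength of singularity at the origin. A minor but essential point is to verify that the latent heat parameter in $V_{A,L}$ controls only the location of the radial free boundary and not the coefficient of $|x|^{2-n}$ at the origin, so that using $L^+=1/m$ in place of the correct $L=\langle 1/g\rangle$ does not degrade the sharp constant $C_*$.
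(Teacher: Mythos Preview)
Your overall strategy---transporting the near-field limit through the sphere $|y|=R_0$ and then rescaling a radial barrier via Lemma~\ref{convergence radial lemma}---is correct and coincides with the approach of \cite[Lemma~7.4]{P1}, to which the paper defers. The lower bound via the radial subsolution $\theta^-$ is carried out properly.

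The upper bound, however, has a genuine gap. You claim $v(\cdot,s)$ is subharmonic because of ``the weak monotonicity $v_t\geq 0$ afforded by \eqref{initial data}.'' The paper neither states nor proves $v_t\geq 0$; on the contrary, the introduction explicitly says that the time monotonicity enjoyed by Hele-Shaw solutions is \emph{missing} in the Stefan problem, and the ``weak monotonicity'' attached to \eqref{initial data} is \eqref{monotonicity condition}, namely $v_0\leq Cv$, which is much weaker. Nothing in \eqref{initial data} forces $\Delta v_0\geq0$. In \cite{P1} this step is free because there $v(\cdot,s)$ is harmonic, but that shortcut is unavailable for Stefan and your comparison with the harmonic majorant $W(y)=(C_*+2\varepsilon)|y|^{2-n}$ is not justified.

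The repair is simply to symmetrize your own lower-bound argument: build a radial Stefan \emph{super}solution $\theta^+$ on $\{|y|\geq R_0\}\times[T_0,\infty)$ with boundary value $(C_*+2\varepsilon)R_0^{2-n}$ on $|y|=R_0$, velocity law $g\equiv M$, and radial initial data at $s=T_0$ dominating $v(\cdot,T_0)$ on $|y|\geq R_0$ (the bound $v\leq C|y|^{2-n}$ of Lemma~\ref{viscos free boundary bound} together with the near-field limit on a suitable compact annulus makes this possible while keeping the value at $|y|=R_0$ equal to $(C_*+2\varepsilon)R_0^{2-n}$). Since $g\leq M$, viscosity comparison gives $v\leq\theta^+$; Lemma~\ref{convergence radial lemma} then yields $(\theta^+)^\lambda\to V_{C_*+2\varepsilon,\,1/M}$, whose singularity coefficient at the origin is $C_*+2\varepsilon$ regardless of the latent-heat parameter, exactly as you observed for $\theta^-$. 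Passing to $v^*$ and sending $\varepsilon\to 0$ then completes the upper bound.
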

 \begin{proof}
   See \cite[Lemma 7.4]{P1}.
 \end{proof}
 \begin{lemma}[cf. {\cite[Lemma 5.4]{K3}}]
   \label{limit of sequence in 0-level set of ulambda k}
   Suppose that $(x_k,t_k) \in \{u^{\lambda_k}=0\}$ and $(x_k,t_k,\lambda_k) \rightarrow (x_0,t_0,\infty)$. Then:\begin{enumerate}[label=\alph*)]
     \item $U(x_0,t_0)=0$,
     \item If $x_k \in \Gamma_{t_k}(u^{\lambda_k})$ then $x_0  \in \Gamma
     _{t_0}(U)$.
   \end{enumerate}
 \end{lemma}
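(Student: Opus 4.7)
The plan is to combine the locally uniform convergence $u^{\lambda_k}\to U$ on $(\Rn\setminus\{0\})\times[0,\infty)$ from Theorem~\ref{convergence of variational solutions} with the rescaled version of Lemma~\ref{viscos free boundary bound}, which under the Stefan rescaling remains $C_1 t^{1/n}\le |x|\le C_2 t^{1/n}$ on $\Gamma_t(v^\lambda)$ for $\lambda t\ge t_0$, together with the identification $\Gamma(u^\lambda)=\Gamma(v^\lambda)$ from Lemma~\ref{domain of u coincide with domain of v}.

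For part (a), the principal case $x_0\neq 0$ is immediate: a compact neighborhood of $(x_0,t_0)$ lies inside the convergence region, so $U(x_0,t_0)=\lim_k u^{\lambda_k}(x_k,t_k)=0$. If $x_0=0$ and $t_0=0$ the statement is trivial since $U(\cdot,0)\equiv 0$. The case $x_0=0$, $t_0>0$ cannot occur: for $k$ large, $|x_k|<C_1 t_k^{1/n}$ and $K^{\lambda_k}\subset B_{C_1 t_k^{1/n}}(0)$; since $\Gamma_{t_k}(v^{\lambda_k})$ avoids $B_{C_1 t_k^{1/n}}(0)$ while $v^{\lambda_k}>0$ on $K^{\lambda_k}$, connectedness forces $B_{C_1 t_k^{1/n}}(0)\subset \Omega_{t_k}(v^{\lambda_k})=\Omega_{t_k}(u^{\lambda_k})$, giving $u^{\lambda_k}(x_k,t_k)>0$, contradicting the hypothesis.

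For part (b), applying the same free boundary bound to $x_k\in \Gamma_{t_k}(v^{\lambda_k})$ yields $C_1 t_k^{1/n}\le |x_k|\le C_2 t_k^{1/n}$, so $x_0\neq 0$ when $t_0>0$. Part (a) then gives $U(x_0,t_0)=0$, and since $\Omega_{t_0}(U)=B_{\rho(t_0)}(0)$ by the explicit radial form of $V_{C_*,L}$, we obtain $|x_0|\ge \rho(t_0)$.

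The remaining and main obstacle is to rule out $|x_0|>\rho(t_0)$. I would argue by contradiction: suppose $|x_0|=\rho(t_0)+3\eta$ for some $\eta>0$; by continuity of $\rho$, $U\equiv 0$ on $\overline{B_{2\eta}(x_0)}\times[0,t_0+\delta]$ for some $\delta>0$, so $u^{\lambda_k}\to 0$ uniformly there. The key step is Caffarelli-type non-degeneracy applied to the elliptic obstacle problem satisfied by $u^{\lambda_k}(\cdot,t_k)$: on the positive set, $\Delta u^{\lambda_k}=\lambda_k^{(2-n)/n}u^{\lambda_k}_t-f(\lambda_k^{1/n}x)$, and on $B_{2\eta}(x_0)$ (far from the origin for $\lambda_k$ large, so $\lambda_k^{1/n}x\notin \Omega_0$ and $f=-1/g^{\lambda_k}$) the right-hand side is bounded below by $1/(2M)>0$ for $k$ large, since $-f\ge 1/M$ while $\lambda_k^{(2-n)/n}u^{\lambda_k}_t=\lambda_k^{(2-n)/n}v^{\lambda_k}\to 0$ using $v^{\lambda_k}\le C|x|^{2-n}$ from Lemma~\ref{viscos free boundary bound}. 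Non-degeneracy then forces any free boundary point of $u^{\lambda_k}(\cdot,t_k)$ in $B_\eta(x_0)$ to satisfy $\sup_{B_\eta}u^{\lambda_k}(\cdot,t_k)\ge c\eta^2$ with $c=1/(4nM)$, contradicting the uniform vanishing. Thus $u^{\lambda_k}(\cdot,t_k)$ has no free boundary in $B_\eta(x_0)$, and since $u^{\lambda_k}(x_k,t_k)=0$ with $x_k\in B_\eta(x_0)$, connectedness of $B_\eta(x_0)$ forces $u^{\lambda_k}\equiv 0$ on $B_\eta(x_0)$, which contradicts $x_k\in\Gamma_{t_k}(u^{\lambda_k})$. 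This completes the plan; the case $n=2$ is handled identically by the remark at the start of Section~\ref{sec:visc-conv} using $1/\log\mathcal{R}(\lambda)\to 0$ in place of $\lambda^{(2-n)/n}\to 0$.
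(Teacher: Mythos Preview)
Your argument is correct and follows exactly the Kim--Mellet proof that the paper cites: locally uniform convergence $u^{\lambda_k}\to U$ from Theorem~\ref{convergence of variational solutions} yields (a), while Caffarelli non-degeneracy for the obstacle problem (stated separately in this paper as Lemma~\ref{positive sup rescaling}) yields (b). The final connectedness remark is unnecessary, since the contradiction already arises once non-degeneracy at $x_k\in\Gamma_{t_k}(u^{\lambda_k})\cap B_\eta(x_0)$ gives $\sup_{B_\eta(x_k)} u^{\lambda_k}(\cdot,t_k)\ge c\eta^2$ against the uniform vanishing on $B_{2\eta}(x_0)\supset B_\eta(x_k)$.
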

 \begin{proof}
   See proof of \cite[Lemma 5.4]{K3}.
 \end{proof}

 The rest of the convergence proof in \cite{P1} relies on the monotonicity of the solutions of the
 Hele-Shaw problem in time. Since the Stefan problem lacks this monotonicity, we will show that
 sufficiently regular initial data satisfy a weak monotonicity below. The convergence result for general initial data will then follow by uniqueness of the limit and the comparison principle.

 \begin{lemma}
 	\label{weak monotonicity}
  Suppose that $v_0$ satisfies \eqref{initial data}. Then there exist $C \geq 1$ independent of $x$ and $t$ such that
  \begin{equation}
  \label{monotonicity condition}
  v_0(x) \leq C v(x, t) \mbox{ in } \mathbb{R}^n \backslash K \times [0,\infty).
  \end{equation}
 \end{lemma}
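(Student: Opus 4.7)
The plan is to sandwich $v_0$ between two multiples of a fixed time-independent harmonic function $H$, and then use parabolic comparison to transfer the lower bound to $v(\cdot,t)$ for all times.

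First, I would let $H$ denote the harmonic function on $\Omega_0 \setminus K$ with $H = 1$ on $\partial K$ and $H = 0$ on $\partial \Omega_0$. Since $\partial K, \partial \Omega_0 \in C^{1,1}$, $H \in C^1(\overline{\Omega_0 \setminus K})$ and Hopf's lemma gives $|DH| > 0$ on $\partial \Omega_0$. Because $v_0 \in C^2(\overline{\Omega_0 \setminus K})$ shares the same Dirichlet boundary values as $H$ and satisfies $|Dv_0| \neq 0$ on $\partial \Omega_0$, a first-order Taylor expansion shows that $v_0/H$ extends continuously and strictly positively to $\overline{\Omega_0 \setminus K}$: on $\partial \Omega_0$ the limit is the positive ratio $|\partial_\nu v_0|/|\partial_\nu H|$, on $\partial K$ the ratio equals $1$, and on the interior both $v_0$ and $H$ are continuous and positive. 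By compactness there exist constants $0 < c_1 \leq 1 \leq c_2 < \infty$ with
\begin{equation*}
  c_1 H(x) \leq v_0(x) \leq c_2 H(x) \qquad \text{on } \overline{\Omega_0 \setminus K}.
\end{equation*}

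Second, by the monotonicity $\Omega_0 \subset \Omega_t(u)$ of the variational solution stated in the proposition following the existence theorem, combined with Lemma~\ref{domain of u coincide with domain of v} and the fact that $|Dv_0| > 0$ with $g \geq m > 0$ forces the free boundary to move strictly outward at $t = 0^+$, we have $\overline{\Omega_0} \subset \Omega_t(v)$ for every $t > 0$, so $v > 0$ and solves the heat equation classically on an open neighborhood of $\overline{\Omega_0 \setminus K}$ for each $t > 0$. Now $c_1 H$ is a time-independent caloric function, and on the parabolic boundary of $\overline{\Omega_0 \setminus K} \times [0, T]$ we have $c_1 H \leq v_0 = v(\cdot, 0)$ at $t = 0$, $c_1 H = c_1 \leq 1 = v$ on $\partial K \times [0, T]$, and $c_1 H = 0 \leq v$ on $\partial \Omega_0 \times [0, T]$. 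Applying the parabolic maximum principle on $[\epsilon, T]$ and passing to the limit $\epsilon \to 0$ by continuity of $v$ gives
\begin{equation*}
  v(x, t) \geq c_1 H(x) \qquad \text{on } \overline{\Omega_0 \setminus K} \times [0, \infty).
\end{equation*}

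Combining the two bounds, $v_0(x) \leq c_2 H(x) \leq (c_2/c_1)\, v(x, t)$ on $\overline{\Omega_0 \setminus K} \times [0, \infty)$. On $K$ both sides equal $1$, and on $\Omega_0^c$ we have $v_0 = 0 \leq v$, so the inequality $v_0 \leq Cv$ holds on all of $(\mathbb{R}^n \setminus K) \times [0, \infty)$ with $C := c_2/c_1 \geq 1$. The main delicate point is justifying the parabolic comparison at $t = 0$, where the free boundary of $v$ coincides with $\partial \Omega_0$ and $v$ is not classical up to the closed set at the initial time; this is handled by direct viscosity comparison on $\{v > 0\}$, or equivalently by working on $[\epsilon, T]$ where $v$ is classical on a neighborhood of $\overline{\Omega_0 \setminus K}$ and then exploiting continuity of $v$ at $t = 0$ to send $\epsilon \to 0$.
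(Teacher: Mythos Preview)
Your argument is correct and uses the same core ingredients as the paper---a harmonic barrier on $\Omega_0 \setminus K$ vanishing on $\partial\Omega_0$, together with Hopf's lemma---but your execution is more direct. The paper constructs $w$ harmonic with $w = \varepsilon$ on $K$, $w = 0$ on $\partial\Omega_0$, chooses $\varepsilon$ small so that $w \leq v_0$, and then uses the Stefan comparison principle to get $w \leq v$ for all time. After that, however, the paper obtains the upper bound $v_0 \leq C v$ by a contradiction argument: assuming a sequence $(x_k,t_k)$ with $\tfrac{1}{k}v_0(x_k) > v(x_k,t_k)$, it invokes the near-field limit (Theorem~\ref{Near field limit Theorem}) to show $\{t_k\}$ is bounded, extracts a limit point on $\partial\Omega_0$, and then compares gradients there. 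Your two-sided sandwich $c_1 H \leq v_0 \leq c_2 H$ bypasses the contradiction step and the appeal to the near-field limit entirely, yielding the constant $C = c_2/c_1$ explicitly.

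One remark on your ``delicate point'': the cleanest way to get $c_1 H \leq v$ globally is exactly what the paper does with $w$, namely to observe that $c_1 H$ (extended by zero outside $\Omega_0$, constant in time) is a stationary classical subsolution of the full Stefan problem \eqref{Stefan} with boundary data $c_1 \leq 1$ and initial data $c_1 H \leq v_0$, and then apply the Stefan comparison principle (Lemma following Theorem~2.9). This avoids worrying about regularity of $v$ up to the corner $\partial\Omega_0 \times \{0\}$, since the free boundary condition for the stationary barrier is trivially satisfied ($V_\nu = 0 \leq g(x)|D(c_1 H)|$). Your $[\epsilon,T]$ workaround has the circularity that at $t = \epsilon$ you do not yet know $c_1 H \leq v(\cdot,\epsilon)$; routing the comparison through the Stefan problem rather than the heat equation on a fixed domain resolves this cleanly.
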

\begin{proof}
  Let $\gamma_1:=\min_{\partial \Omega_0}|Dv_0|, \gamma_2 :=\max_{\partial \Omega_0}|Dv_0|$. Note
  that $0<\gamma_1\leq \gamma_2 <\infty$. For given $\varepsilon > 0$, let $w$ be the solution of
  boundary value problem
\begin{equation*}
\left \{
  \begin{aligned}
\Delta w &= 0 &&\mbox{in } \Omega_0 \backslash K,\\
w &= \varepsilon &&\mbox{on } K,\\
w &=0 && \mbox{on } \Omega_0^c.
\end{aligned}
\right.
\end{equation*}
For $x$ close to $\partial \Omega_0$ we have $v_0(x) \geq \frac{\gamma_1}{2} \text{dist}(x,\partial
\Omega_0)$. Since $\gamma_1>0$, $v_0 >0$ in $\Omega_0$ and $\partial \Omega_0$ has a uniform ball
condition, we can choose $\varepsilon >0$ small enough such that $w \leq v_0$ in $\mathbb{R}^n
\backslash K$. By Hopf's Lemma, $\gamma_w:=\min_{\partial\Omega_0}|Dw|>0$. It is clear that $w$ is
a classical subsolution of the Stefan problem (\ref{Stefan}) and the comparison principle yields
\begin{equation}
\label{w<=v}
w \leq v \mbox{ in } (\mathbb{R}^n \backslash K) \times [0,\infty).
\end{equation}

 Now assume that (\ref{monotonicity condition}) does not hold, that is,  for every $k \in \mathbb N$, there exists $(x_k,t_k) \in \mathbb{R}^n \backslash K \times [0,\infty)$ such that
\begin{equation}
\label{contradiction}
\frac{1}{k}v_0(x_k)> v(x_k,t_k).
\end{equation}
Clearly $x_k \in \Omega_0$. $\{t_k\}$ is bounded by Theorem~\ref{Near field limit Theorem} since
$v_0$ is bounded.
Therefore, there exists a subsequence $(x_{k_l},t_{k_l})$ and a point $(x_0,t_0)$ such that
$(x_{k_l},t_{k_l}) \rightarrow (x_0,t_0)$. Since $v_0$ is bounded, we get $v(x_0,t_0)\leq 0$ and
thus $x_0 \in \partial \Omega_0$ by \eqref{w<=v}. Consequently, for $k_l$ large enough,
\begin{equation*}
w(x_{k_l}) \geq \frac{1}{2} \gamma_w \text{dist}(x_{k_l}, \partial
\Omega_0)=\left(\frac{\gamma_w}{4\gamma_2}\right)2\gamma_2 \text{dist}(x_{k_l}, \partial \Omega_0)
\geq \frac{\gamma_w}{4 \gamma_2} v_0(x_{k_l}).
\end{equation*}
Combine this with (\ref{w<=v}) and (\ref{contradiction}) to obtain
\begin{equation*}
\frac{1}{k_l}v_0(x_{k_l})> \frac{\gamma_w}{4\gamma_2} v_0(x_{k_l})
\end{equation*}
for every $k_l$ large enough, which yields a contradiction since $v_0(x_{k_l})>0$.
 		\end{proof}

 Some of the following lemmas will hold under the condition (\ref{monotonicity condition}).
 \begin{lemma}
   \label{monotonicity 2}
   Let $u$ be the solution of the variational problem \eqref{Variational problem}, and $v$ be the
   associated viscosity solution of the Stefan
  problem, and suppose that \eqref{monotonicity condition} holds. Then
   \begin{equation}
   u(x,t)\leq C t v(x,t).
   \end{equation}
 \end{lemma}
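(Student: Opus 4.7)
The plan is to upgrade the weak monotonicity \eqref{monotonicity condition} to an \emph{approximate monotonicity of $v$ in time},
\begin{equation}
v(x,s) \leq C\,v(x,t), \qquad 0 \leq s \leq t,
\label{eq:amon-plan}
\end{equation}
with the same constant $C$ as in \eqref{monotonicity condition}. Granted \eqref{eq:amon-plan}, the lemma follows by integration:
\begin{equation*}
u(x,t) = \int_0^t v(x,s)\,ds \;\leq\; \int_0^t C\,v(x,t)\,ds \;=\; Ct\,v(x,t).
\end{equation*}
Since \eqref{monotonicity condition} is exactly the $s=0$ instance of \eqref{eq:amon-plan}, the task is to extend the control to arbitrary earlier times.

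To prove \eqref{eq:amon-plan}, I would fix $T_0 := t-s \geq 0$ and introduce the competitor $\tilde v(x,\tau) := C\,v(x,\tau+T_0)$, so that the desired bound becomes $\tilde v \geq v$. I plan to establish this by a parabolic maximum principle on the time-varying open set $\Omega(v) \setminus K$ over $[0,s]$. Its parabolic boundary decomposes as: the initial slice, where $\tilde v(\cdot,0) = Cv(\cdot,T_0) \geq v_0$ is precisely \eqref{monotonicity condition}; the fixed boundary $\partial K$, where $\tilde v = C \geq 1 = v$; and the free boundary $\Gamma(v)$, where $v \equiv 0$ while $\tilde v \geq 0$, the last point using that $\Omega(v) = \Omega(u)$ is nondecreasing in time (from the earlier proposition and Lemma \ref{domain of u coincide with domain of v}), so $\Omega_\tau(\tilde v) = \Omega_{\tau+T_0}(v) \supset \Omega_\tau(v)$. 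Both $v$ and $\tilde v$ are caloric in the interior of $\Omega(v)\setminus K$, and the parabolic maximum principle then gives $\tilde v \geq v$ there; outside $\Omega(v)$ the inequality is automatic because $v \equiv 0$.

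The main obstacle I foresee is that $\tilde v$ is \emph{not} a viscosity supersolution of the original Stefan problem \eqref{Stefan}, so one cannot simply invoke \cite[Corollary 3.12]{K3} to deduce $\tilde v \geq v$. The free boundary of $\tilde v$ moves at normal velocity $g(x)|Dv| = g(x)|D\tilde v|/C$, strictly slower than the original law, and tracing through the free boundary test for a $\phi$ touching $\tilde v$ from below yields only $\phi_t \geq g(x_0)|D\phi|^2/C$, which is weaker than the required $\phi_t \geq g(x_0)|D\phi|^2$ whenever $C>1$. For this reason I would bypass the viscosity route entirely and rely on the direct heat equation comparison on $\Omega(v)\setminus K$ as above, using the continuity of $v$ up to $\Gamma(v)$ from the positive side that is furnished by the coincidence of weak and viscosity solutions recalled in Section \ref{sec:preliminaries}.
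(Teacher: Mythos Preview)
Your approach is correct in spirit but takes a detour that the paper avoids. You first upgrade \eqref{monotonicity condition} to the time-monotonicity $v(\cdot,s)\le Cv(\cdot,t)$ and then integrate; the paper instead compares $u$ and $Ctv$ in one step. Setting $\tilde u:=Ctv$, one computes in $\Omega(u)\setminus K$
\[
\tilde u_t-\Delta\tilde u \;=\; Cv + Ct(v_t-\Delta v)\;=\;Cv\;\ge\;v_0\;\ge\;f\;=\;u_t-\Delta u,
\]
using \eqref{monotonicity condition} and that $v$ is caloric there. On the parabolic boundary $\tilde u\ge u$ (equal to $0$ at $t=0$, $Ct\ge t$ on $\partial K$, and $\tilde u\ge 0=u$ on $\Gamma(u)$), so the classical comparison gives $u\le Ctv$ directly. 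This is the entire proof in the paper.

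Your route proves the stronger intermediate fact $v(\cdot,s)\le Cv(\cdot,t)$, which is nice to have, but one of your justifications is shaky: you invoke ``continuity of $v$ up to $\Gamma(v)$ from the positive side'' as a consequence of the weak/viscosity coincidence, yet the paper explicitly warns (in the remark following the coincidence theorem) that $v$ \emph{may be discontinuous} on $\Gamma(u)$. Your maximum principle on $\Omega(v)\setminus K$ therefore needs a different justification at the free boundary. The paper's direct comparison sidesteps this completely: since $u$ is continuous and $v$ is lower semicontinuous, $u-Ctv$ is upper semicontinuous on the closure and the parabolic maximum principle applies without any regularity of $v$ at $\Gamma(u)$. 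In your setup $v-\tilde v$ need not be upper semicontinuous there, so you would have to argue more carefully (e.g.\ work with $v^\star$ and $(\tilde v)_\star$, or use that for $T_0>0$ the set $\overline{\Omega_\tau(v)}$ sits inside $\Omega_{\tau+T_0}(v)$ where $\tilde v$ is smooth, which in turn needs a strict expansion of the support).
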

 \begin{proof}
   The statement follows from checking that $\tilde{u}:= C tv$ is a supersolution of the heat
   equation in $\Omega(u)$ and the classical comparison principle. Indeed,
   $\tilde u_t - \Delta \tilde u = Cv + Ct(v_t - \Delta v) \geq v_0 \geq f = u_t - \Delta u$ in $\Omega(u)$ by
   \eqref{monotonicity condition}.
   \end{proof}
   \begin{lemma}[cf. {\cite[Lemma 5.5]{K3}}]
   \label{Inclusion of positive domain V, v*}
   The function $v_*$ satisfies
   $\Omega(V) \subset \Omega(v_*).$
   In particular $v_* \geq V$.
 \end{lemma}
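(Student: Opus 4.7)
The plan is to bootstrap the variational convergence of Theorem~\ref{convergence of variational solutions} to a pointwise lower bound on the rescaled viscosity solutions $v^\lambda$ by means of the weak monotonicity in Lemma~\ref{monotonicity 2}. Writing $U := U_{C_*, \langle 1/g \rangle}$, the rescaled form of that lemma, obtained by direct substitution in the definitions of $v^\lambda$ and $u^\lambda$, reads
\[
  u^\lambda(x,t) \leq C t\, v^\lambda(x,t) \qquad \text{for all } (x,t) \text{ and all } \lambda > 0.
\]

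Assume first that $v_0$ satisfies \eqref{initial data}. Fix $(x_0, t_0) \in \Omega(V)$, so $V(x_0, t_0) > 0$. Since $V$ is the explicit self-similar solution with positive set $\{|x| < \rho(t)\}$ expanding in $t$, the function $V(x_0, \cdot)$ is positive on the interval $(\rho^{-1}(|x_0|), t_0]$, and hence $\eta := U(x_0, t_0) = \int_0^{t_0} V(x_0, s)\, ds > 0$. By continuity of $U$, I pick a compact neighborhood $N \subset (\mathbb{R}^n \setminus \{0\}) \times (0, \infty)$ of $(x_0, t_0)$ on which $U \geq \eta/2$ and $t \leq 2 t_0$. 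The locally uniform convergence $u^\lambda \to U$ from Theorem~\ref{convergence of variational solutions} then gives $u^\lambda \geq \eta/4$ on $N$ for all $\lambda$ sufficiently large, and the rescaled monotonicity above yields $v^\lambda \geq \eta/(8 C t_0)$ on $N$. Taking $\liminf$ as $(y, s, \lambda) \to (x_0, t_0, \infty)$ produces $v_*(x_0, t_0) \geq \eta/(8 C t_0) > 0$, so $(x_0, t_0) \in \Omega(v_*)$.

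For the ``in particular $v_* \geq V$'' assertion, I use the maximum principle. By Lemma~\ref{subharmonic v^* Lemma}, $v_*(\cdot, t)$ is superharmonic in $\Omega_t(v_*) \setminus \{0\}$, and the inclusion just established gives $\Omega_t(V) \setminus \{0\} \subset \Omega_t(v_*) \setminus \{0\}$. Since $V(\cdot, t)$ is harmonic on $\Omega_t(V) \setminus \{0\}$, vanishes on $\partial \Omega_t(V)$ (where $v_* \geq 0$ by nonnegativity), and shares the same leading singularity at the origin with $v_*$ by Lemma~\ref{boundary condition for limit problem}, applying the maximum principle to $v_* - (1 - \delta) V$ on $\Omega_t(V) \setminus B_\varepsilon(0)$ and then letting $\varepsilon, \delta \to 0$ yields $v_* \geq V$ on $\Omega_t(V)$. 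Outside $\Omega_t(V)$ the inequality is trivial since $V = 0$.

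The main obstacle is the reliance on the regularity assumption \eqref{initial data} through Lemma~\ref{monotonicity 2}. To extend the inclusion to arbitrary admissible initial data I would approximate $v_0$ from below by $v_0^k$ satisfying \eqref{initial data}; the comparison principle gives $v^k \leq v$, hence $v_*^k \leq v_*$, and since $V$ depends only on $K$ and $n$, the inclusions $\Omega(V) \subset \Omega(v_*^k) \subset \Omega(v_*)$ immediately transfer the result to $v_*$.
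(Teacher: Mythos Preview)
Your proof is correct and follows essentially the same approach as the paper: the key inequality $u^\lambda \leq Ct\, v^\lambda$ from the rescaled Lemma~\ref{monotonicity 2}, combined with the locally uniform convergence $u^\lambda \to U$ and the fact that $\Omega(U) = \Omega(V)$, forces $v_* > 0$ on $\Omega(V)$; the paper does this by a one-line contradiction (take $\liminf^*$ of both sides directly), while you spell out the neighborhood argument explicitly. The ``in particular'' part is likewise the same elliptic comparison using Lemmas~\ref{subharmonic v^* Lemma} and~\ref{boundary condition for limit problem}, and your final paragraph on general initial data is not needed here since the lemma is stated (and used in Step~1 of Theorem~\ref{convergence of rescaled viscosity solution}) only under assumption~\eqref{initial data}.
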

 \begin{proof}
   Assume that the inclusion does not hold, there exists $(x_0,t_0) \in \Omega(V)$ and $v_*(x_0,t_0)=0$.
   By (\ref{monotonicity condition}) and Lemma \ref{monotonicity 2}, there exists $C>1$ such that $
   u(x,t)\leq C t v(x,t) .$ This inequality is preserved under the rescaling, $u^\lambda(x,t)\leq C
   t v^\lambda(x,t)$ in $(\mathbb{R}^n\backslash K^\lambda)\times [0,\infty)$. Taking $\liminf^*$
   of both sides gives the contradiction $0<U(x_0,t_0) \leq C t_0v_*(x_0,t_0)=0$.

   The inequality $v_* \geq V$ follows from the elliptic comparison principle as $v_*$ is superharmonic in
   $\Omega(v_*) \setminus \{0\}$ by Lemma~\ref{subharmonic v^* Lemma} and behaves as $V$ at the origin by Lemma \ref{boundary condition for limit
  problem}.
 \end{proof}

 \begin{lemma}
   \label{positive sup rescaling}
 There exists constant $C > 0$ independent of $\lambda$ such that for every $x_0 \in \overline{\Omega_{t_0}(u^\lambda)}$ and $B_r(x_0) \cap \Omega_0^{\lambda} = \emptyset$ for some $r$, for every $\lambda$ large enough we have
   \begin{equation*}
     \sup_{x \in \overline{B_r(x_0)}}u^\lambda(x,t_0)>C r^2.
   \end{equation*}
 \end{lemma}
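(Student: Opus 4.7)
The plan is to prove a standard non-degeneracy estimate of obstacle-problem type, by constructing a subharmonic radial comparison function around $x_0$ and applying the maximum principle. The key ingredient is a uniform pointwise lower bound on $\Delta u^\lambda(\cdot,t_0)$ on its positivity set outside $\Omega_0^\lambda$. On $\{u^\lambda > 0\}$, we have equality in the rescaled variational inequality \eqref{rescaled Variational problem}, and in $(\Omega_0^\lambda)^c$ the forcing reduces to $f(\lambda^{1/n}x) = -1/g(\lambda^{1/n}x) \leq -1/M$. Since $u^\lambda_t = v^\lambda \geq 0$ and $\lambda^{(2-n)/n} \geq 0$ for $n \geq 3$ (for $n=2$ replace the coefficient with $1/\log \mathcal{R}(\lambda) \geq 0$), this yields
\[
  \Delta u^\lambda(\cdot,t_0) \;=\; \lambda^{(2-n)/n}\, u^\lambda_t + \frac{1}{g^\lambda} \;\geq\; \frac{1}{M} \qquad \text{a.e.\ on } \{u^\lambda(\cdot,t_0) > 0\} \cap B_r(x_0),
\]
provided $\lambda$ is large enough that the ambient ball $B^\lambda$ of the obstacle problem contains $B_r(x_0)$ and $\Omega_0^\lambda$ is well separated from it.

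Next I would set
\[
  h(x) := u^\lambda(x,t_0) - \frac{|x-x_0|^2}{2nM},
\]
which is subharmonic on the open set $E := \{u^\lambda(\cdot,t_0) > 0\} \cap B_r(x_0)$ by the previous display. Assume first that $x_0 \in \Omega_{t_0}(u^\lambda)$, so that $h(x_0) = u^\lambda(x_0,t_0) \geq 0$. The relevant boundary of $E$ splits into a part lying on $\Gamma_{t_0}(u^\lambda) \cap \overline{B_r(x_0)}$, on which $h = -|x-x_0|^2/(2nM) \leq 0$, and a part on $\partial B_r(x_0)$. Since $h(x_0) \geq 0$, the weak maximum principle for the $W^{2,p}$ subsolution $h$ on $\overline E$ forces the maximum to be attained at some $y \in \partial B_r(x_0) \cap \overline E$, giving
\[
  u^\lambda(y,t_0) \;=\; h(y) + \frac{r^2}{2nM} \;\geq\; \frac{r^2}{2nM}.
\]

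For the boundary case $x_0 \in \overline{\Omega_{t_0}(u^\lambda)} \setminus \Omega_{t_0}(u^\lambda)$, approximate by $x_k \in \Omega_{t_0}(u^\lambda)$ with $x_k \to x_0$; for any $\rho < r$ and $k$ large, $B_\rho(x_k) \subset B_r(x_0)$ and $B_\rho(x_k) \cap \Omega_0^\lambda = \emptyset$, so the previous step gives $\sup_{\overline{B_r(x_0)}} u^\lambda(\cdot,t_0) \geq \rho^2/(2nM)$, and letting $\rho \to r^-$ yields the bound with constant $1/(2nM)$. Choosing any $C < 1/(2nM)$ makes the inequality strict. There is no genuinely delicate step — this is a Caffarelli-type non-degeneracy — but one must verify that the $W^{2,p}$ regularity of $u^\lambda(\cdot,t_0)$ and the $L^\infty$ bound on $u^\lambda_t$ from \eqref{u^lamda_t bounded} are available uniformly in $\lambda$ large, which is precisely where the hypothesis ``$\lambda$ large enough'' enters.
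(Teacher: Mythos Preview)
Your proposal is correct and takes essentially the same approach as the paper, which simply defers to \cite[Lemma~3.1]{K2}: this is exactly the Caffarelli-type non-degeneracy argument you have written out, the key point being the uniform lower bound $\Delta u^\lambda(\cdot,t_0) \geq 1/M$ on the positivity set outside $\Omega_0^\lambda$. One small remark: your argument in fact only uses $u^\lambda_t \geq 0$, not the uniform $L^\infty$ bound on $u^\lambda_t$ that you invoke in the last paragraph, so the role of ``$\lambda$ large enough'' is really only to ensure the ball $B_r(x_0)$ lies in the domain of the rescaled problem.
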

 \begin{proof}
   Follow the arguments in \cite[Lemma 3.1]{K2} with noting that since $u^\lambda_t$ is bounded then
  for $\lambda$ large enough, $u^\lambda$ is a strictly subharmonic function in
  $\Omega_{t_0}(u^\lambda) \setminus \overline{\Omega_0^\lambda}$.
   \end{proof}
 \begin{cor}
   \label{monotoniciy for v}
   There exists a constant $C_1=C_1(n,M,\lambda_0)$ such that if $(x_0,t_0) \in \Omega(v^\lambda)$ and $B_r(x_0) \cap \Omega_0^\lambda = \emptyset$ and $\lambda\geq \lambda_0$, we have
   \begin{equation*}
   \sup_{B_r(x_0)}v^\lambda(x,t_0) \geq \frac{C_1r^2}{t_0}.
   \end{equation*}
 \end{cor}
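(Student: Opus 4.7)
The plan is to combine the preceding lemma on the quadratic growth of $u^\lambda$ with the weak monotonicity bound, using the fact that the rescaling preserves the identity $u^\lambda(\cdot,t)=\int_0^t v^\lambda(\cdot,s)\,ds$. First I would observe that the rescalings in Section~\ref{sec:rescaling} are consistent in the sense that
\begin{equation*}
u^\lambda(x,t) \;=\; \int_0^t v^\lambda(x,s)\,ds
\end{equation*}
(a direct change of variables $\tau=\lambda s$ in the definitions of $v^\lambda$ and $u^\lambda$), and similarly the inequality from Lemma~\ref{monotonicity 2}, namely $u(x,t)\leq C t v(x,t)$, is scale-invariant in the sense that $u^\lambda(x,t)\leq C t v^\lambda(x,t)$ in $(\mathbb{R}^n\setminus K^\lambda)\times[0,\infty)$, where the constant $C$ is inherited from the underlying weak monotonicity estimate \eqref{monotonicity condition} for the (fixed) original initial data $v_0$.

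Next I would apply Lemma~\ref{positive sup rescaling}: since $(x_0,t_0)\in \Omega(v^\lambda)$, by Lemma~\ref{domain of u coincide with domain of v} we have $x_0\in\overline{\Omega_{t_0}(u^\lambda)}$, and the assumption $B_r(x_0)\cap \Omega_0^\lambda=\emptyset$ is exactly what that lemma requires. Hence, for $\lambda\geq\lambda_0$ large enough,
\begin{equation*}
\sup_{x\in \overline{B_r(x_0)}} u^\lambda(x,t_0) \;>\; C r^2,
\end{equation*}
with a constant $C$ depending only on $n$, $M$, and $\lambda_0$.

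Combining these two ingredients,
\begin{equation*}
C r^2 \;<\; \sup_{B_r(x_0)} u^\lambda(x,t_0) \;\leq\; C t_0 \sup_{B_r(x_0)} v^\lambda(x,t_0),
\end{equation*}
so that $\sup_{B_r(x_0)} v^\lambda(x,t_0) \geq C_1 r^2/t_0$ with $C_1=C_1(n,M,\lambda_0)$, as required.

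I do not expect any serious obstacle here; the corollary is essentially a packaging step. The only point that needs care is verifying that the weak monotonicity constant in Lemma~\ref{monotonicity 2} survives the rescaling unchanged (which it does, since both $u$ and $tv$ scale with the same power of $\lambda$), and that the ``large $\lambda$'' threshold coming from Lemma~\ref{positive sup rescaling} is absorbed into the dependence of $C_1$ on $\lambda_0$.
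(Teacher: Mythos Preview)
Your proposal is correct and follows exactly the approach the paper takes: the paper's proof is the one-line ``follows directly from Lemma~\ref{monotonicity 2} and Lemma~\ref{positive sup rescaling},'' and you have simply unpacked that line, including the observation (used elsewhere in the paper, in the proof of Lemma~\ref{Inclusion of positive domain V, v*}) that the inequality $u\leq Ctv$ is preserved under the rescaling.
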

 \begin{proof}
   The inequality follows directly from Lemma \ref{monotonicity 2} and Lemma \ref{positive sup rescaling}.
 \end{proof}
 \begin{lemma}[cf. {\cite[Lemma 5.6 ii]{K3}}]
   \label{Inclusion of boundaries}
   We have the following inclusion:
   $$\Gamma(v^*) \subset \Gamma(V).$$
 \end{lemma}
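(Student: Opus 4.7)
The plan is to argue by contradiction, leveraging the locally uniform convergence $u^\lambda \to U_{C_*, L}$ already established in Theorem~\ref{convergence of variational solutions}. First, observe that $v^* \geq v_* \geq V$ by Lemma~\ref{Inclusion of positive domain V, v*}, and that $\Omega(v^*) = \{v^* > 0\}$ is open by the upper semicontinuity of $v^*$. Hence any $(x_0, t_0) \in \Gamma(v^*)$ automatically satisfies $V(x_0, t_0) = 0$: if instead $V(x_0, t_0) > 0$, then $(x_0, t_0) \in \Omega(V) \subset \Omega(v^*)$, an open set, contradicting $(x_0, t_0) \in \partial \Omega(v^*)$. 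It remains to verify $(x_0, t_0) \in \overline{\Omega(V)}$.

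Suppose for contradiction that there is $r \in (0, |x_0|/4)$ with $\overline{B_r(x_0)} \times [t_0 - r, t_0 + r]$ disjoint from $\overline{\Omega(V)}$. Since $V$ is radial with $\Omega_t(V) = B_{\rho(t)}(0)$ and $\rho$ is non-decreasing in $t$, the vanishing propagates backwards in time and one obtains $V \equiv 0$ on the full cylinder $Q_r := B_r(x_0) \times [0, t_0 + r)$. Integrating in time yields $U_{C_*, L} \equiv 0$ on $Q_r$, so by Theorem~\ref{convergence of variational solutions},
\begin{equation*}
  \sup_{\overline{B_{r/2}(x_0)} \times [0, t_0 + r/2]} u^\lambda \longrightarrow 0 \quad \text{as } \lambda \to \infty.
\end{equation*}

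On the other hand, because $(x_0, t_0) \in \partial \Omega(v^*)$ and $v^*$ is defined as a half-relaxed $\limsup$, a standard diagonal extraction produces sequences $\lambda_n \to \infty$ and $(x_n, t_n) \to (x_0, t_0)$ with $v^{\lambda_n}(x_n, t_n) > 0$, and hence $x_n \in \Omega_{t_n}(u^{\lambda_n})$ by Lemma~\ref{domain of u coincide with domain of v}. For $n$ large, $B_{r/2}(x_n) \subset B_r(x_0)$ and $B_{r/2}(x_n) \cap \Omega_0^{\lambda_n} = \emptyset$, since $\Omega_0^{\lambda_n}$ shrinks to $\{0\}$ while $x_n \to x_0 \neq 0$. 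Lemma~\ref{positive sup rescaling} then delivers
\begin{equation*}
  \sup_{B_{r/2}(x_n)} u^{\lambda_n}(\cdot, t_n) > C(r/2)^2,
\end{equation*}
contradicting the uniform decay just established.

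The main obstacle, in contrast to the Hele-Shaw treatment of \cite{P1}, is the absence of pointwise time-monotonicity for $v^\lambda$; the idea is to transfer the question to the variational quantity $u^\lambda$, where the locally uniform convergence to the explicit limit $U$ pins down the vanishing on a full spacetime neighborhood, while Lemma~\ref{positive sup rescaling} prohibits $\Omega(u^\lambda)$ from intruding there with arbitrarily small mass. A minor subtlety is the diagonal extraction of $(x_n, t_n, \lambda_n)$ with $v^{\lambda_n}(x_n, t_n) > 0$, which rests only on the $\limsup$ definition of $v^*$ together with $(x_0, t_0)$ lying on the boundary of the open set $\Omega(v^*)$.
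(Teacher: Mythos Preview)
Your argument follows essentially the same route as the paper's: split $\Gamma(v^*)\subset\Gamma(V)$ into showing $V(x_0,t_0)=0$ and $(x_0,t_0)\in\overline{\Omega(V)}$, and obtain the second part by combining the locally uniform convergence $u^\lambda\to U$ with the lower bound of Lemma~\ref{positive sup rescaling}. The paper packages the convergence input through Lemma~\ref{limit of sequence in 0-level set of ulambda k} rather than quoting Theorem~\ref{convergence of variational solutions} directly, and does not route the first part through Lemma~\ref{Inclusion of positive domain V, v*}, but the mechanism is the same.

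There is one genuine slip in your reasoning. The positivity set $\{v^*>0\}$ of an \emph{upper} semicontinuous function need not be open (for USC functions it is the sublevel sets $\{v^*<c\}$ that are open; think of $v^*=\chi_{\{x\leq 0\}}$). Your conclusion $V(x_0,t_0)=0$ is still correct, but the justification should be: $\Omega(V)$ is open because $V$ is continuous, and $\Omega(V)\subset\Omega(v^*)$ by $v^*\geq V$; hence if $V(x_0,t_0)>0$ then $(x_0,t_0)\in\operatorname{int}\Omega(v^*)$, which contradicts $(x_0,t_0)\in\partial\Omega(v^*)$. There is also a harmless radius mismatch at the end: you establish the decay of $u^\lambda$ on $\overline{B_{r/2}(x_0)}\times[0,t_0+r/2]$ but then apply Lemma~\ref{positive sup rescaling} on $B_{r/2}(x_n)$, which is only contained in $B_r(x_0)$; simply state the uniform decay on $\overline{B_r(x_0)}\times[0,t_0+r/2]$ instead (this set also avoids the origin since $r<|x_0|/4$).
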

 \begin{proof}
   Argue as in  \cite[Lemma 5.6 ii]{K3} together with using Lemma \ref{limit of sequence in 0-level set of ulambda k} and Lemma \ref{positive sup rescaling} above.
   \end{proof}

 Now we are ready to prove Theorem \ref{convergence of rescaled viscosity solution}.
 \subsection{Proof of Theorem \ref{convergence of rescaled viscosity solution}}
 \begin{proof}
   \textit{\textbf{Step 1}}. We prove the convergence of viscosity solutions and the free boundaries under the conditions (\ref{initial data}) and (\ref{monotonicity condition}) first.

   Lemma~\ref{viscos free boundary bound} yields that $\Omega_t(v^*)$ is bounded at all time $t>0$. Since $\Omega(V)$ is simply connected set, Lemma \ref{Inclusion of boundaries} implies that
   \begin{equation*}
   \overline{\Omega(v^*)} \subset \overline{\Omega(V)} \subset \Omega(V_{C^*+\varepsilon,L}) \mbox{ for all } \varepsilon>0.
   \end{equation*}
   We see from Lemma \ref{subharmonic v^* Lemma}, $v^*(\cdot, t)$ is a subharmonic function in $\mathbb{R}^n \setminus \{0\}$ for every $t>0$ and $\lim_{|x|\rightarrow 0}\frac{v^*(x,t)}{V(x,t)}=1$ for all $t \geq 0$ by Lemma \ref{boundary condition for limit problem}, comparison principle yields $v^*(x,t) \leq V_{C_*+\varepsilon,L}(x,t)$ for every $\varepsilon>0$.

   By Lemma \ref{Inclusion of positive domain V, v*}, $V(x,t) \leq v_*$ and letting $\varepsilon
   \rightarrow 0^+$ we obtain by continuity
   $$V(x,t) \leq v_*(x,t) \leq v^*(x,t) \leq V(x,t).$$
   Therefore, $v_*=v^*= V$ and in particular, $\Gamma(v_*)= \Gamma(v^*)= \Gamma(V)$.

     Now we need to show the uniform convergence of the free boundaries with respect to the Hausdorff distance. Fix $0 < t_1< t_2$ and denote:
     \begin{align*}
     & \Gamma^\lambda:= \Gamma(v^\lambda) \cap \{t_1 \leq t \leq t_2\}, & \Gamma^\infty:= \Gamma(V) \cap \{t_1 \leq t \leq t_2\},
     \end{align*}
a $\delta$-neighborhood of a set $A$ in $\mathbb{R}^n \times \mathbb{R}$   is  $$U_\delta(A):= \{(x,t): \mbox{dist}((x,t), A) < \delta\}.$$

  We need to prove that for all $\delta >0$, there exists $\lambda_0>0$ such that:
  \begin{equation}
  \label{1st Theorem 4.2}
  \begin{matrix}
  \Gamma^\lambda \subset U_\delta (\Gamma^\infty) & \mbox{ and } &
  \Gamma^\infty \subset U_\delta(\Gamma^\lambda), \hspace{1cm} \forall \lambda \geq \lambda_0.
  \end{matrix}
  \end{equation}
  We prove the first inclusion in (\ref{1st Theorem 4.2}) by contradiction. Suppose
  therefore that we can find a subsequence $\{\lambda_k\}$ and a sequence of points $(x_k,t_k) \in \Gamma^{\lambda_k}$ such that
  $\mbox{dist}((x_k,t_k), \Gamma^\infty) \geq \delta.$
  Since $\Gamma^\lambda$ is uniformly bounded in $\lambda$ by Lemma \ref{viscos free boundary
  bound}, there exists a subsequence $\{(x_{k_j}, t_{k_j})\}$ which converge to a point
  $(x_0,t_0)$. By Lemma \ref{limit of sequence in 0-level set of ulambda k}, $(x_0,t_0) \in
  \Gamma(U)= \Gamma(V)$. Moreover, since $t_1 \leq t_{k_j} \leq t_2$ then $t_1 \leq t_0 \leq t_2$
  and therefore, $(x_0,t_0) \in \Gamma^\infty$, a contradiction.

  The proof of the second inclusion in (\ref{1st Theorem 4.2}) is more technical. We prove a
  pointwise result first. Suppose that there exists $\delta >0, (x_0,t_0) \in \Gamma^\infty$ and
  $\{\lambda_k\}, \lambda_k \rightarrow \infty$, such that $\displaystyle \mbox{dist}((x_0,t_0),
  \Gamma^{\lambda_k}) \geq \frac{\delta}{2}$ for all $k$. Then there exists $r >0$ such that $D_r(x_0,t_0):= B(x_0, r) \times [t_0-r,t_0+r]$ satisfies either:
  \begin{equation}
  \label{4nd Theorem 4.2}
  D_r(x_0,t_0) \subset \{v^{\lambda_k}=0\}  \mbox{ for all } k,
  \end{equation}
  or after passing to a subsequence,
  \begin{equation}
  \label{3rd Theorem 4.2}
  D_r(x_0,t_0) \subset \{v^{\lambda_{k}}>0\}  \mbox{ for all } k.
  \end{equation}
If (\ref{4nd Theorem 4.2})  holds, clearly $V=v_* =0$  in $D_r(x_0,t_0)$ which is in a contradiction with the assumption that $(x_0,t_0) \in \Gamma^\infty$.

Thus we assume that  (\ref{3rd Theorem 4.2}) holds.  In $D_r(x_0,t_0)$, $v^{\lambda_k}$ solves the
heat equation $\lambda^{(2-n)/n}v^{\lambda_k}_t- \Delta v^{\lambda_k} =0$. Set
\begin{equation*}
w^k(x,t):=v^{\lambda_k}(x,\lambda_k^{(2-n)/n} t)
\end{equation*} then $w^k>0$ in $D_r^w(x_0,t_0):=B(x_0,r) \times
[\lambda_k^{(n-2)/n}(t_0-r),\lambda_k^{(n-2)/n}(t_0+r)]$ and $w^k$ satisfies $w^k_t -\Delta w^k=0$
in $D_r^w(x_0,t_0)$. Since $\lambda_k^{(n-2)/n} \frac r2 \to \infty$ as $k \to \infty$, by Harnack's inequality for the heat equation, for fixed $\tau > 0$ there exists a constant $C_1 >
0$ such that for each $t \in [t_0-\frac r2, t_0+\frac r2]$ and $\lambda_k$ such that $\tau <
\lambda_k^{(n-2)/n} \tfrac r4$ we have
\begin{equation*}
\sup_{B(x_0,r/2)} w^k(\cdot, \lambda_k^{(n-2)/n}t- \tau) \leq C_1 \inf_{B(x_0,r/2)}w^k(\cdot, \lambda_k^{(n-2)/n}t).
\end{equation*}
This inequality together with Corollary~\ref{monotoniciy for v} yields:
\begin{equation*}
\frac{C_2r^2}{t- \lambda_k^{(2-n)/n}\tau} \leq   \sup_{B(x_0,r/2)} v^{\lambda_k}(\cdot, t- \lambda_k^{(2-n)/n}\tau) \leq C_1 \inf_{B(x_0,r/2)}v^{\lambda_k}(\cdot, t)
\end{equation*}
for all $t \in [t_0 - \frac r2, t_0 + \frac r2]$, $\lambda_k \geq \lambda_0$ large enough, where
$C_2$ only depends on $n, M, \lambda_0$. Taking the limit when $\lambda_k \rightarrow \infty$, the
uniform convergence of $\{v^{\lambda_k}\}$ to $V$ gives $V>0$ in $B(x_0,\frac r2) \times [t_0-\frac
r2,t_0+\frac r2]$, which is a contradiction with $(x_0,t_0) \in \Gamma^\infty \subset \Gamma(V)$.

We have proved that every point of $\Gamma^\infty$ belongs to all
$U_{\delta/2}(\Gamma^\lambda)$ for sufficiently large $\lambda$. Therefore the second
inclusion in \eqref{1st Theorem 4.2} follows from the compactness of $\Gamma^\infty$.

This concludes the proof of Theorem~\ref{convergence of rescaled viscosity solution} when
(\ref{monotonicity condition}) holds.

\textit{\textbf{Step 2}}. For general initial data, we will find upper and lower bounds for the initial
data for which (\ref{monotonicity condition}) holds, and use the comparison principle.
For instance, assume that $v_0 \in C(\Rn)$, $v_0 \geq 0$, such that $\supp v_0$ is bounded, $v_0 = 1$ on $K$.

Choose smooth bounded domains $\Omega_0^1, \Omega_0^2$ such that $K \subset \Omega_0^1
\subset \overline{\Omega_0^1} \subset \supp v_0 \subset \Omega_0^2$. Let $v_0^1, v_0^2$ be two
functions satisfying \eqref{initial data} with positive domains $\Omega_0^1,\Omega_0^2$,
respectively, and $v_0^1 \leq v_0 \leq v_0^2$. If necessary, that is, when $v_0$ is not
sufficiently regular at $\partial K$, we may perturb the boundary data for
$v_0^1$, $v_0^2$ on $K$ as $1 - \varepsilon$ and $1 + \varepsilon$, respectively, for some
$\varepsilon \in (0, 1)$.

Let $v_1, v_2$ be respectively the viscosity solution of the Stefan problem (\ref{Stefan}) with
initial data $v_0^1,v_0^2$. By the comparison principle, we have $v_1 \leq
v \leq v_2$ and after rescaling $v_1^\lambda \leq v^\lambda \leq v_2^\lambda$. By Step 1, we see
that $v_1^\lambda \to V_{C_{*,1-\varepsilon}, L}$ and $v_2^\lambda \to V_{C_{*,1+\varepsilon}, L}$.
Since $C_{*,1\pm\varepsilon} \to C_*$ as $\varepsilon \to 0$ by \cite[Lemma~4.5]{QV}, we deduce the local uniform convergence of
$v^\lambda \to V = V_{C_*, L}$.

The convergence of free boundaries follow from the ordering $\Omega(v_1) \subset \Omega(v) \subset
\Omega(v_2)$ and the convergence of free boundaries of $V_{C_{*,1\pm\varepsilon}, L}$ to
the free boundary of $V_{C_*,L}$ locally uniformly with respect to the Hausdorff distance.
\end{proof}

\subsection*{Acknowledgments}
The first author was partially supported by JSPS KAKENHI Grant No. 26800068 (Wakate B). This work
is a part of doctoral research of the second author. The second author would like to thank her
Ph.D.
supervisor Professor Seiro Omata for his valuable support and advice.

\begin{bibdiv}
  \begin{biblist}
\bib{Baiocchi}{article}{
   author={Baiocchi, Claudio},
   title={Sur un probl\`eme \`a fronti\`ere libre traduisant le filtrage de
   liquides \`a travers des milieux poreux},
   language={French},
   journal={C. R. Acad. Sci. Paris S\'er. A-B},
   volume={273},
   date={1971},
   pages={A1215--A1217},
   review={\MR{0297207}},
}

	\bib{C}{article}{
		author={L.A. Caffarelli},
		title={The regularity of free boundaries in higher dimensions},
		date={1977},
		journal={Acta Math.},
		pages={155\ndash 184},
		number={139},
		review={MR 56:12601}
	}

	\bib{CF}{article}{
		author={L.A. Caffarelli},
		author={A. Friedman},
		title={Continuity of the temperature in the Stefan problem},
		date={1979},
		journal={Indiana Univ. Math. J.},
		pages={53\ndash 70},
		number={28},
		review={MR 80i:35104},
	}

\bib{CS}{article}{
   author={Caffarelli, Luis A.},
   author={Souganidis, Panagiotis E.},
   title={Rates of convergence for the homogenization of fully nonlinear
   uniformly elliptic pde in random media},
   journal={Invent. Math.},
   volume={180},
   date={2010},
   number={2},
   pages={301--360},
   issn={0020-9910},
   review={\MR{2609244}},
   doi={10.1007/s00222-009-0230-6},
}

\bib{CSW}{article}{
   author={Caffarelli, Luis A.},
   author={Souganidis, Panagiotis E.},
   author={Wang, L.},
   title={Homogenization of fully nonlinear, uniformly elliptic and
   parabolic partial differential equations in stationary ergodic media},
   journal={Comm. Pure Appl. Math.},
   volume={58},
   date={2005},
   number={3},
   pages={319--361},
   issn={0010-3640},
   review={\MR{2116617}},
   doi={10.1002/cpa.20069},
}

\bib{Duvaut}{article}{
   author={Duvaut, Georges},
   title={R\'esolution d'un probl\`eme de Stefan (fusion d'un bloc de glace \`a
   z\'ero degr\'e)},
   language={French},
   journal={C. R. Acad. Sci. Paris S\'er. A-B},
   volume={276},
   date={1973},
   pages={A1461--A1463},
   review={\MR{0328346}},
}
  \bib{EJ}{article}{
     author={Elliott, C. M.},
     author={Janovsk\'y, V.},
     title={A variational inequality approach to Hele-Shaw flow with a moving
     boundary},
     journal={Proc. Roy. Soc. Edinburgh Sect. A},
     volume={88},
     date={1981},
     number={1-2},
     pages={93--107},
     issn={0308-2105},
     review={\MR{611303}},
     doi={10.1017/S0308210500017315},
  }

\bib{Evans}{article}{
   author={Evans, Lawrence C.},
   title={The perturbed test function method for viscosity solutions of
   nonlinear PDE},
   journal={Proc. Roy. Soc. Edinburgh Sect. A},
   volume={111},
   date={1989},
   number={3-4},
   pages={359--375},
   issn={0308-2105},
   review={\MR{1007533}},
   doi={10.1017/S0308210500018631},
}

    \bib{FK}{article}{
     	author={Friedman, Avner},
     	author={Kinderlehrer, David},
     	title={A one phase Stefan problem},
     	journal={Indiana Univ. Math. J.},
     	volume={24},
     	date={1974/75},
     	number={11},
     	pages={1005--1035},
     	issn={0022-2518},
     	review={\MR{0385326}},
     	doi={10.1512/iumj.1975.24.24086},
     }

	\bib{FN}{article}{
			author={Kinderlehrer, David},
			author={Nirenberg, Louis},
			title={The smoothness of the free boundary in the one phase Stefan
				problem},
			journal={Comm. Pure Appl. Math.},
			volume={31},
			date={1978},
			number={3},
			pages={257--282},
			issn={0010-3640},
			review={\MR{480348}},
			doi={10.1002/cpa.3160310302},
		}

    \bib{K1}{article}{
      	author={Kim, Inwon C.},
      	title={Uniqueness and existence results on the Hele-Shaw and the Stefan
      		problems},
      	journal={Arch. Ration. Mech. Anal.},
      	volume={168},
      	date={2003},
      	number={4},
      	pages={299--328},
      	issn={0003-9527},
      	review={\MR{1994745}},
      	doi={10.1007/s00205-003-0251-z},
      }

\bib{KimHomog}{article}{
   author={Kim, Inwon C.},
   title={Homogenization of the free boundary velocity},
   journal={Arch. Ration. Mech. Anal.},
   volume={185},
   date={2007},
   number={1},
   pages={69--103},
   issn={0003-9527},
   review={\MR{2308859 (2008f:35019)}},
   doi={10.1007/s00205-006-0035-3},
}

\bib{KimContact}{article}{
   author={Kim, Inwon C.},
   title={Homogenization of a model problem on contact angle dynamics},
   journal={Comm. Partial Differential Equations},
   volume={33},
   date={2008},
   number={7--9},
   pages={1235--1271},
   issn={0360-5302},
   review={\MR{2450158 (2010e:35031)}},
   doi={10.1080/03605300701518273},
}

\bib{KimContactRates}{article}{
   author={Kim, Inwon C.},
   title={Error estimates on homogenization of free boundary velocities in
   periodic media},
   journal={Ann. Inst. H. Poincar\'e Anal. Non Lin\'eaire},
   volume={26},
   date={2009},
   number={3},
   pages={999--1019},
   issn={0294-1449},
   review={\MR{2526413 (2010f:35020)}},
   doi={10.1016/j.anihpc.2008.10.004},
}

    \bib{K2}{article}{
      	author={Kim, Inwon C.},
      	author={Mellet, Antoine},
      	title={Homogenization of a Hele-Shaw problem in periodic and random
      		media},
      	journal={Arch. Ration. Mech. Anal.},
      	volume={194},
      	date={2009},
      	number={2},
      	pages={507--530},
      	issn={0003-9527},
      	review={\MR{2563637}},
      	doi={10.1007/s00205-008-0161-1},
      }

    \bib{K3}{article}{
     	author={Kim, Inwon C.},
     	author={Mellet, Antoine},
     	title={Homogenization of one-phase Stefan-type problems in periodic and
     		random media},
     	journal={Trans. Amer. Math. Soc.},
     	volume={362},
     	date={2010},
     	number={8},
     	pages={4161--4190},
     	issn={0002-9947},
     	review={\MR{2608400}},
     	doi={10.1090/S0002-9947-10-04945-7},
     }

    \bib{P1}{article}{
      	author={Po\v z\'ar, Norbert},
      	title={Long-time behavior of a Hele-Shaw type problem in random media},
      	journal={Interfaces Free Bound.},
      	volume={13},
      	date={2011},
      	number={3},
      	pages={373--395},
      	issn={1463-9963},
      	review={\MR{2846016}},
      	doi={10.4171/IFB/263},
      }

\bib{Pozar15}{article}{
   author={Po\v z\'ar, Norbert},
   title={Homogenization of the Hele-Shaw problem in periodic spatiotemporal
   media},
   journal={Arch. Ration. Mech. Anal.},
   volume={217},
   date={2015},
   number={1},
   pages={155--230},
   issn={0003-9527},
   review={\MR{3338444}},
   doi={10.1007/s00205-014-0831-0},
}

    \bib{QV}{article}{
    	author={Quir\'os, Fernando},
    	author={V\'azquez, Juan Luis},
    	title={Asymptotic convergence of the Stefan problem to Hele-Shaw},
    	journal={Trans. Amer. Math. Soc.},
    	volume={353},
    	date={2001},
    	number={2},
    	pages={609--634},
    	issn={0002-9947},
    	review={\MR{1804510}},
    	doi={10.1090/S0002-9947-00-02739-2},
    }

    \bib{R3}{article}{
       author={Rodrigues, Jos\'e-Francisco},
       title={Free boundary convergence in the homogenization of the one-phase
       Stefan problem},
       journal={Trans. Amer. Math. Soc.},
       volume={274},
       date={1982},
       number={1},
       pages={297--305},
       issn={0002-9947},
       review={\MR{670933}},
       doi={10.2307/1999510},
    }

    \bib{R1}{book}{
   author={Rodrigues, Jos\'e-Francisco},
   title={Obstacle problems in mathematical physics},
   series={North-Holland Mathematics Studies},
   volume={134},
   note={Notas de Matem\'atica [Mathematical Notes], 114},
   publisher={North-Holland Publishing Co., Amsterdam},
   date={1987},
   pages={xvi+352},
   isbn={0-444-70187-7},
   review={\MR{880369}},
}

  \bib{RReview}{article}{
     author={Rodrigues, Jos\'e-Francisco},
     title={The Stefan problem revisited},
     conference={
        title={Mathematical models for phase change problems},
        address={\'Obidos},
        date={1988},
     },
     book={
        series={Internat. Ser. Numer. Math.},
        volume={88},
        publisher={Birkh\"auser, Basel},
     },
     date={1989},
     pages={129--190},
     review={\MR{1038069}},
  }
    \bib{R2}{article}{
    	author={Rodrigues, Jos\'e-Francisco},
    	title={Variational methods in the Stefan problem},
    	conference={
    		title={Phase transitions and hysteresis},
    		address={Montecatini Terme},
    		date={1993},
    	},
    	book={
    		series={Lecture Notes in Math.},
    		volume={1584},
    		publisher={Springer, Berlin},
    	},
    	date={1994},
    	pages={147--212},
    	review={\MR{1321833}},
    	doi={10.1007/BFb0073397},
    }
\bib{Souganidis}{article}{
   author={Souganidis, Panagiotis E.},
   title={Stochastic homogenization of Hamilton-Jacobi equations and some
   applications},
   journal={Asymptot. Anal.},
   volume={20},
   date={1999},
   number={1},
   pages={1--11},
   issn={0921-7134},
   review={\MR{1697831}},
}
  \end{biblist}
\end{bibdiv}

\end{document}